\renewcommand{\Bbb}{\mathbb}
\newcommand{\RR}{\Bbb{R}}
\newcommand{\NN}{\Bbb{N}}
\newcommand{\CC}{\Bbb{C}}
\newcommand{\eps}{\varepsilon}
\DeclareMathOperator{\dive}{div}
\providecommand{\keywords}[1]{\textbf{{Keywords.}} #1}
\providecommand{\AMS}[1]{\textbf{{AMS subject classifications.}} #1}
\renewcommand{\abstract}[1]{\textbf{{Abstract.}} #1}
\newenvironment{proof}{\paragraph{Proof:}}{$\square$}
\newtheorem{theorem}{Theorem}[section]
\newtheorem{proposition}{Proposition}[section]
\newtheorem{lemma}{Lemma}[section]
\newtheorem{remark}{Remark}[section]
\newtheorem{assumption}{Assumption}[section]
\newcommand\reallywidehat[1]{%
\savestack{\tmpbox}{\stretchto{%
  \scaleto{%
    \scalerel*[\widthof{\ensuremath{#1}}]{\kern-.6pt\bigwedge\kern-.6pt}%
    {\rule[-\textheight/2]{1ex}{\textheight}}
  }{\textheight}%
}{0.5ex}}%
\stackon[1pt]{#1}{\tmpbox}%
}
\begin{document}
\title{Well-posedness of a time-harmonic elasticity problem in a half-strip}
\author{Jean-Luc AKIAN\thanks{ONERA, Universit\'{e} Paris Saclay F-92322 Ch\^{a}tillon, France, (jean-luc.akian@onera.fr).}}         
\maketitle

\abstract{
In this paper we establish that the time-harmonic elasticity problem in a half-strip with non-homogeneous Dirichlet conditions on its boundary section and traction-free conditions on its upper and lower boundaries, has a unique weak solution when this solution is searched under the form of the sum of a linear combination of outgoing waves and an exponentialy decreasing function (in the sense of weighted Sobolev spaces).}

\keywords{Lamb modes, half-strip, radiation conditions, Fredholm operators}

\AMS{74B05,74H20,74H25,74J05,47A53,46E35}
\section{Introduction.}
\label{intro}
\setcounter{equation}{0}
The diffraction of elastic waves  by a defect in a plate (for example in the context of non-destructive testing) can be modelled using Lamb modes. Lamb modes are obtained by searching solutions to the time-harmonic elasticity equations in a plate assuming an invariance with respect to the direction in the plate orthogonal to the direction of wave propagation, and then the problem in the plate is reduced to a problem in a strip. But a fundamental issue (among others) is to reduce the problem of diffraction by a defect which is posed in an unbounded strip to a problem posed in a bounded strip. For that one must find a framework where
the time-harmonic elasticity problem in a half-strip with non-homogeneous Dirichlet conditions on its boundary section is well-posed.
If there exists such a framework, then it is possible to define a Dirichlet-to-Neumann operator on a boundary section of a bounded strip containing the defect (by continuity of the displacement and the traction on this section), in order to prescribe appropriate boundary conditions on it, and thus to reduce the problem to a problem posed in a bounded strip.
It is the purpose of the present paper to find such a framework and to prove a result of well-posedness.

More precisely we consider the time-harmonic elasticity problem in a half-strip with non-homogeneous Dirichlet conditions on its boundary section, traction-free conditions on its upper and lower boundaries and appropriate radiation conditions. 
The radiation conditions consist in searching the solution under the form of the sum of a linear combination of outgoing waves and an exponentialy decreasing function (in the sense of weighted Sobolev spaces). The main theorem of this paper is Theorem \ref{th-half-plane} which establishes the existence and uniqueness of a weak solution (namely in the variational sense) to the equations of time-harmonic elasticity in a half-strip satisfying the aforementioned boundary conditions and radiation conditions. The author was unable to find a complete proof of the well-posedness of this problem in the literature, despite the simplicity of its statement. The uniqueness of a solution to it has been proved for example in \cite{Nazarov1}, Theorem 1 or in \cite{Bouhennache}, Theorem 2.1.  The present paper proves the existence of a solution to this problem. The proofs of the present paper are based on the methods of the books \cite{Nazarov-Plamenevsky}, \cite {Kozlov-Mazya-Rossmann} and of the paper \cite{Bourgeois-Chesnel-Fliss}. However, the framework in the present paper which is the variational framework is not the same as that of references \cite{Nazarov-Plamenevsky}, \cite {Kozlov-Mazya-Rossmann} and we give here {\it detailed proofs} with {\it precise references} to justify the assertions. Note that in \cite{Nazarov}, Theorem 5.1, a result linked with Theorem \ref{th-half-plane} is stated and for the proof the reader is referred to the book \cite{Nazarov-Plamenevsky}. But in \cite{Nazarov-Plamenevsky} it is impossible to find the precise result \cite{Nazarov}, Theorem 5.1. In \cite{Nazarov-Plamenevsky} the theorem which is closer to Theorem 5.1 of \cite{Nazarov} is Theorem 3.5, p.160 and it is not obvious at all that this theorem implies Theorem 5.1 of \cite{Nazarov}.

Essential tools of the proofs in this paper are the use of weighted Sobolev spaces in order to take into account the exponential growth or decay of the solutions at infinity, the fundamental Proposition \ref{pro2sol} which gives an asymptotic expansion of the solution to the time-harmonic elasticity problem in a strip with respect to the eigenvectors and generalized eigenvectors of the corresponding spectral problem (Lamb modes and associated modes are a part of these eigenvectors), the complex symplectic form \eqref{eq15n11} which provides a way to distinguish outgoing and incoming waves and the Fredholm property of some operators. 
The paper is basically {\it self-contained} and is the opportunity to give {\it complete}, and in some instances, new or simpler proofs of rather classical results.

The paper is organized as follows. In section \ref{Set-up} the systems of equations of the time-harmonic elasticity problems in a strip and in a half-strip are written in a formal form. The weighted Sobolev spaces for a strip are introduced in section \ref{weighted}. In section \ref{strip} we give a sufficient condition in order to establish the well-posedness of the time-harmonic elasticity problem in a strip (Theorem \ref{thisom}). This kind of result is fairly classical (see \cite{Kozlov-Mazya-Rossmann}, Chapter 5, \cite{Nazarov-Plamenevsky}, Chapter 3). But in this paper we consider weak solutions and the given proof is rather an adaptation of the proof of Theorem 5.1 of \cite{Bourgeois-Chesnel-Fliss}. In section \ref{strip-Asymptotics} we establish an asymptotics of the solution to the previous problem (Proposition \ref{pro2sol}). There to this kind of result is quite classical (see \cite{Kozlov-Mazya-Rossmann}, Chapter 5, \cite{Nazarov-Plamenevsky}, Chapter 3) but here we consider weak solutions and the proofs must be adapted. Moreover it should be noted that the proof of the corresponding result in \cite{Kozlov-Mazya-Rossmann}, Chapter 5 is not correct as it is and that the calculus of the coefficients of the asymptotics (formula \eqref{eq15l1}) is more straightforward in the present proof than in the aforementioned references (see Remark \ref{rem_error}).
Section \ref{Fredholm} is devoted to the Fredholm property of the operator (Eq. \eqref{propfredholm}) associated to the variational formulation of the half-strip problem (Proposition \ref{propfredholm}). For that purpose an essential tool is Proposition \ref{prop1}. The method of proof of this proposition is similar to that of Theorem 5.3 of \cite{Bourgeois-Chesnel-Fliss}. However the boundary conditions are not the same on the upper and lower parts of the strip (Neumann in the present paper versus Dirichlet in \cite{Bourgeois-Chesnel-Fliss}), and the proof of Proposition \ref{prop1} relies on a precise form of Korn inequality (Theorem \ref{thKorn}), which itself is an application of Theorem 3.1 of \cite{Grabovsky} (Theorem \ref{thGrab}).
In section \ref{Asymptotics} we establish an asymptotics of a solution to the half-strip problem (Proposition \ref{asymt}). In section \ref{Radiation} we introduce the sesquilinear antihermitian form $q_{\Omega_+}$ (Eq. \eqref{eq15n11}). In Proposition \ref{prosympl} we show that this form is nondegenerate so that it is symplectic. This form makes it possible to distinguish outgoing and incoming waves according to the Mandelstam radiation principle.
In section \ref{well-posedness} we show that the half-strip problem is well-posed (Theorem \ref{th-half-plane}) under Assumption \ref{ass1}. In appendix \ref{oppenc} we recall classical results on operator pencils, in appendix \ref{results} we establish lemmas showing the linear independence of some families of functions and in appendix \ref{alg} we give a new proof of an important result but in the variational framework.
\section{Set-up of the problem.}
\label{Set-up}
\setcounter{equation}{0}
In the sequel we shall use the following notations. The set of natural numbers will be  denoted by $\NN$ (containing $0$) and the set of positive natural numbers by $\NN^*$ (= $\NN \setminus \{0\}$).
If $E$ is a Banach space, denote by $E^*$ its topological dual, $\langle .,. \rangle$ the duality bracket between $E^*$ and $E$ (the context will determine the involved Banach space) and if $f \in E^*$ its norm is defined by $\lVert f \rVert_{E^*}$ = 
$\underset{u \in E, \lVert u \rVert \leq 1}{\mbox{sup}} \lvert\langle f,u \rangle \rvert$. 
If $n, m \in \NN^*$ and $\Omega$ is an open set of $\RR^n$, the set of ${\cal C}^{\infty}$ functions from $\Omega$ (resp. $\overline{\Omega}$) with values in $\RR^m$ will be denoted by ${\cal C}^{\infty}(\Omega, \RR^m)$ (resp. ${\cal C}^{\infty}(\overline{\Omega}, \RR^m)$), with similar notations for functions with values in $\CC^m$ and for functions with compact support in $\Omega$ (resp. $\overline{\Omega}$), where ${\cal C}^{\infty}$ is replaced by ${\cal C}^{\infty}_0$. If $k \in \NN$, the set of functions from $\Omega$ with values in  $\RR^m$ whose components are in the Sobolev space $H^k(\Omega)$ will be denoted by $H^k(\Omega,\RR^m)$, with similar notations for functions with values in $\CC^m$ (if $k=0$, $H^0$ is replaced by $L^2$).
The inner product in $H^k(\Omega,\RR^m)$ or $H^k(\Omega,\CC^m)$ will be denoted by ${(.,.)_{k,\Omega}}$, the associated norm by ${||.||_{k,\Omega}}$, the associated semi-norm by ${|.|_{k,\Omega}}$ and the norm in the topological dual $(H^k(\Omega,\RR^m))^*$ or $(H^k(\Omega,\CC^m))^*$ by ${||.||_{k,\Omega,*}}$.
Recall that the word "iff" means "if and only if". We shall use the Einstein summation convention of repeated indices.

Let us consider a linearly elastic plate of thickness $2h$ ($h>0$) occupying the open set $\Omega_1$ of $\RR^3$:
\begin{equation}
\label{eq1a}
\Omega_1 = \{x =(x_1,x_2,x_3) \in \RR^3, \, x_1 \in \omega_h \}
\end{equation}
where $\omega_h = (-h,h)$.
The plate is assumed to be homogeneous and isotropic (with Lam\'{e} coefficients $\lambda$ and $\mu$ satisfying the conditions $3 \lambda + 2 \mu >0$ and $\mu >0$, see \cite{Salencon}, p.341), with mass density $\rho$ and traction-free on the upper and lower boundary. Denote by $u=(u_i)$, $\eps_{ij}(u)$, $\sigma_{ij}(u)$ the displacement field (with values in $\CC^3$), the components of the strain tensor and the components of the stress tensor associated to $u$. 
We assume that the displacement field $u$ is independent of $x_2$ (the physical properties are independent of the $x_2$ direction).
Let us define the strip $\Omega$ by:
\begin{equation}
\label{eq1ax}
\Omega = \{x =(x_1,x_3) \in \RR^2, \, x_1 \in \omega_h \}.
\end{equation}
The time-harmonic elastodynamics equations and boundary conditions for the plate are reduced to a problem in the strip $\Omega$ and are written as follows {(the derivative with respect to $x_i$ being denoted by $\partial_i$):}
\begin{equation}
\label{eq2x}
\partial_j\sigma_{ij}(u) + \rho \omega^2 {u_i} = - f_i \mbox{  in } \Omega
\end{equation}
with
\begin{equation}
\label{eq4x}
\sigma_{ij}(u) = \lambda {(\dive u)} \delta_{i,j} + 2 \mu \eps_{ij}(u) \mbox{  in } \Omega,
\end{equation}
\begin{equation}
\label{eq4ax}
\eps_{ij}(u) = \frac{1}{2}\left(\partial_{j} u_i + \partial_{i} u_j \right) \mbox{  in } \Omega
\end{equation}
and
\begin{equation}
\label{eq4bx}
\sigma_{i1}(u)(x_1 = \pm h) = 0,
\end{equation}
where $\omega >0$ is fixed and $f_i$ are the components of the body force density (independent of $x_2$).
The displacement field ${u}$ (with values in $\CC^3$) formally satisfies the following variational problem: for all displacement field $v \in {\cal C}^{\infty}_0(\overline{\Omega}, \CC^3)$, 
\begin{equation}
\label{eq1x}
b_{\Omega}(u,v)= F(v)
\end{equation}
with the notations
\begin{equation}
\label{eq2a}
b_{\Omega}(u,v) = \int _{\Omega} \sigma_{ij}(u) \eps_{ij}(\overline{v}) - \omega^2 \int _{\Omega} \rho u_i \overline{v_i}
\end{equation}
and
\begin{equation}
\label{eq3x1}
F(v)= \int _{\Omega} f_i \overline{v_i}.
\end{equation}
Now we are interested in the time-harmonic elastodynamics equations but in the half-strip $x_3>0$, with no body force and with a prescribed displacement field $g$ on the boundary $\{x_3 = 0\}$.
Let us set:
\begin{equation}
\label{eq1ay}
\Omega_+ = \left \{ x =(x_1,x_3) \in \RR^2, x_1 \in \omega_h, x_3 >0 \right \},\, 
\Sigma = \left \{ x =(x_1,x_3) \in \RR^2, x_1 \in \omega_h, x_3 = 0 \right \}.
\end{equation}
The time-harmonic elastodynamics equations and boundary conditions for the half-strip $\Omega_+$ are written as follows
\begin{equation}
\label{eq2x1}
\partial_j\sigma_{ij}(u) + \rho \omega^2 {u_i} = 0 \mbox{  in } \Omega_+
\end{equation}
with
\begin{equation}
\label{eq4x1}
\sigma_{ij}(u) = \lambda {(\dive u)} \delta_{i,j} + 2 \mu \eps_{ij}(u) \mbox{  in } \Omega_+,
\end{equation}
\begin{equation}
\label{eq4ax1}
\eps_{ij}(u) = \frac{1}{2}\left(\partial_{j} u_i + \partial_{i} u_j \right) \mbox{  in } \Omega_+,
\end{equation}
\begin{equation}
\label{eq4bx1}
\sigma_{i1}(u)(x_1 = \pm h,\, x_3 >0) = 0
\end{equation}
and
\begin{equation}
\label{eq4bx2}
u \vert_{\Sigma}=g.
\end{equation}
The displacement field ${u}$ (with values in $\CC^3$) formally satisfies the following variational problem: $u$ satisfies \eqref{eq4bx2} and for all displacement field $v \in {\cal C}^{\infty}_0(\overline{\Omega}_+, \CC^3)$ satisfying \eqref{eq4bx2} with $g=0$, 
\begin{equation}
\label{eq1x1}
b_{\Omega_+}(u,v) = 0
\end{equation}
with the notation
\begin{equation}
\label{eq2a1}
b_{\Omega_+}(u,v) = \int _{\Omega_+} \sigma_{ij}(u) \eps_{ij}(\overline{v}) - \omega^2 \int _{\Omega_+} \rho u_i \overline{v_i}.
\end{equation}
\section{Weighted Sobolev spaces.}
\label{weighted}
\setcounter{equation}{0}
In this section we introduce the weighted Sobolev spaces and the partial Fourier-Laplace transform for a strip.
For $\beta \in \RR$, $k \in \NN$ and $m \in \NN^*$, define $W^k_{\beta}(\Omega, \RR^m)$ as the completion of ${\cal C}^{\infty}_0(\overline{\Omega},\RR^m)$ for the norm $\lVert u \rVert_{\beta,k,\Omega}$ = $\lVert e^{\beta x_3} u \rVert_{k,\Omega}$ with similar definitions for $W^k_{\beta}(\Omega, \CC^m)$.
The norm on the topological dual $(W^k_{\beta}(\Omega, \RR^m))^*$ (resp. $(W^k_{\beta}(\Omega, \CC^m))^*$) of $W^k_{\beta}(\Omega, \RR^m)$ (resp. $W^k_{\beta}(\Omega, \CC^m)$) will be denoted by $||.||_{\beta,k,\Omega,*}$.
Define the partial Fourier-Laplace transform with respect to $x_3$  on ${\cal C}^{\infty}_0(\overline{\Omega}, \CC^m)$ by

\begin{equation}
\label{eq100}
\hat{v}(.,\nu) = ({\cal L}_{x \rightarrow \nu} v) (.,\nu) := \int_{-  \infty}^{+\infty} e^{- \nu x} v(.,x)dx,\, v \in {\cal C}^{\infty}_0(\overline{\Omega}, \CC^m),\, \nu \in \CC.
\end{equation}
It is continuously extended to an isomorphism between $W^k_{\beta}(\Omega, \CC^m)$ and 
\begin{equation}
\label{eq101}
\reallywidehat{W^k_{\beta}(\Omega,\CC^m)}= \left \{\hat{v} \in L^2(l_{- \beta}, H^k(\omega_h,\CC^m)), \, \frac{1}{i}\int_{l_{-\beta}} \lVert \hat{v}(.,\nu) \rVert^2_{k,\omega_h, |\nu|} d \nu< +\infty \right \}
\end{equation}
equipped with the norm:
\begin{equation}
\label{eq101a}
\frac{1}{2 \pi i}\int_{l_{-\beta}} \lVert \hat{v}(.,\nu) \rVert^2_{k,\omega_h, |\nu|} d \nu,  \hat{v} \in \reallywidehat{W^k_{\beta}(\Omega,\CC^m)},
\end{equation}
where for $\beta \in \RR$, $l_{\beta} = \beta + i \RR$, and $||.||_{k,\omega_h, |\nu|}$ is the norm on $H^k(\omega_h,\CC^m)$ defined by:
\begin{equation}
\label{eq102}
\lVert \varphi \rVert _{k,\omega_h, |\nu|} = \left( \sum_{\alpha,\gamma \in \NN, \alpha + \gamma \leq k} \lVert \lvert \nu \rvert^{\alpha} \partial_{x_1}^{\gamma} \varphi\rVert_{0,\omega_h}^2 \right)^{1/2},\, \varphi \in H^k(\omega_h,\CC^m).
\end{equation}
For $\nu = -\beta + i s$ $\in l_{-\beta}$, we have
\begin{equation}
\label{eq102a}
\hat{v}(.,\nu) =  F( e^{\beta .} v) (.,s),\,  v \in W^k_{\beta}(\Omega, \CC^m),
\end{equation}
where $F$ is the partial Fourier transform defined by:
\begin{equation}
\label{eq102b}
F{u}(.,s) = \int_{-  \infty}^{+\infty} e^{- i s x} u(.,x) dx,\,  u \in L^2(\Omega,\CC^m).
\end{equation}
The inverse partial Fourier-Laplace transform ${\cal L}_{x \rightarrow \nu}^{-1}$ is given by:
\begin{equation}
\label{eq104}
 v(.,x) = ({\cal L}^{-1}_{x \rightarrow \nu} \hat{v})(.,x) = \frac{1}{2 \pi i} \int_{l_{-\beta}} e^{\nu x} \hat{v}(.,\nu) d \nu, \,  \hat{v} \in \reallywidehat{{W^0_{\beta}(\Omega,\CC^m)}},\,  x \in \RR.
\end{equation}
The topological dual of $\reallywidehat{W^k_{\beta}(\Omega,\CC^m)}$ is:
\begin{equation}
\label{eq105}
(\reallywidehat{W^k_{\beta}(\Omega,\CC^m)})^*= \left \{\hat{g} \in L^2(l_{\beta}, (H^k(\omega_h,\CC^m))^*), \, \frac{1}{i}\int_{l_{\beta}} \lVert \hat{g}(.,\nu) \rVert^2_{k,\omega_h,|\nu|,*} d \nu < +\infty \right \}
\end{equation}
equipped with the norm:
\begin{equation}
\label{eq105a}
\frac{1}{2 \pi i}\int_{l_{\beta}} \lVert \hat{g}(.,\nu) \rVert^2_{k,\omega_h,|\nu|,*} d \nu,  \hat{g} \in (\reallywidehat{W^1_{\beta}(\Omega,\CC^m)})^*
\end{equation}
where $||.||_{k,\omega_h, |\nu|,*}$ is the norm on the topological dual $(H^k(\omega_h,\CC^m))^*$ of $H^k(\omega_h,\CC^m)$ itself equipped with the norm $||.||_{k,\omega_h,|\nu|}$.
The Parseval formula reads as follows: for all $u \in W^0_{\beta}(\Omega,\CC^m)$, $v \in W^0_{-\beta}(\Omega,\CC^m)$, we have
\begin{equation}
\label{eq108}
\int_{-\infty}^{+\infty} u(., x) \overline{v}(.,x) dx = \frac{1}{2 \pi} \int_{-\infty}^{+\infty} \hat{u}(.,-\beta +i s) \overline{ \hat{v}(.,\beta + i s)} ds,
\end{equation}
or
\begin{equation}
\label{eq109}
\int_{-\infty}^{+\infty} u(., x) \overline{v}(.,x) dx = \frac{1}{2 \pi i} \int_{l_{-\beta}} \hat{u}(.,\nu) { \hat{\overline{v}}(.,-\nu)} d \nu,
\end{equation}
and since 
\begin{equation}
\label{eq109a}
 \hat{u}(.,\nu) = F(e^{\beta .} u)(.,s),\, \hat{\overline{v}}(.,-\nu) = \overline{F(e^{-\beta .} v)(.,s)},\,\nu = -\beta +i s \in l_{-\beta},
\end{equation}
we have also
\begin{equation}
\label{eq109b}
\int_{-\infty}^{+\infty} u(., x) \overline{v}(.,x) dx = \frac{1}{2 \pi} \int_{- \infty}^{+ \infty}F(e^{\beta .} u)(.,s)
 \overline{F(e^{-\beta .} v)(.,s)} ds.
\end{equation}
On the other hand,
\begin{equation}
\label{eq110}
 \widehat{\partial_x u}(.,\nu) = \nu  \hat{u}(.,\nu),\,  u \in W^1_{\beta}(\Omega,\CC^m),\, \nu \in l_{-\beta}.
\end{equation}
The partial Fourier-Laplace transform ${\cal L}_{x \rightarrow \nu} f$ =  $\hat{f}$ of $f \in ({W^k_{\beta}(\Omega,\CC^m)})^*$ is characterized by $\hat{f} \in (\reallywidehat{W^k_{\beta}(\Omega,\CC^m)})^*$ and 
\begin{equation}
\label{eq106}
\frac{1}{2 \pi i} \int_{l_{\beta}} \langle \hat{f}(., \nu), \hat{v}(.,- \nu) \rangle d \nu = 
\langle f, v \rangle,\, v \in  W^k_{\beta}(\Omega,\CC^m)
\end{equation}
which is consistent with \eqref{eq109}.
\section{Well-posedness of the problem in the strip $\Omega$.}
\label{strip}
\setcounter{equation}{0}
In this section we shall study the mathematical formulation of the variational problem \eqref{eq1x}, \eqref{eq2a}, \eqref{eq3x1}.
Let $\beta \in \RR$. The sesquilinear form $b_{\Omega}$ defined in \eqref{eq2a} makes sense for $u \in W^1_{\beta}(\Omega,\CC^3)$ and  $v \in W^1_{-\beta}(\Omega,\CC^3)$ and there exists $C>0$ satisfying:
\begin{equation}
\label{eq3}
|b_{\Omega}(u,v)| \leq C||u||_{\beta,1,\Omega} ||v||_{-\beta,1,\Omega},\,  u \in W^1_{\beta}(\Omega,\CC^3), \,  v \in W^1_{-\beta}(\Omega,\CC^3),
\end{equation}
therefore one can define a linear and continuous map ${B}_{\Omega,\beta}$: $W^1_{\beta}(\Omega,\CC^3)$ $\rightarrow$ $({W}^1_{-\beta}(\Omega,\CC^3))^*$ by
\begin{equation}
\label{eq4}
<B_{\Omega, \beta}u,v> = b_{\Omega}(u,\overline{v}),\,  u \in W^1_{\beta}(\Omega,\CC^3), v \in W^1_{-\beta}(\Omega,\CC^3),
\end{equation}
and we have
\begin{equation}
\label{eq5}
||B_{\Omega,\beta}u||_{-\beta,1,\Omega,*} \leq C ||u||_{\beta,1,\Omega},\,  u \in {W^1_{\beta}(\Omega,\CC^3)} .
\end{equation}
The next three lemmas are used in the proof of Theorem \ref{thisom}.
\begin{lemma}
\label{lap}
If $\nu \in \CC$, define the linear continuous map $\mathscr{L}(\nu)$: $H^1(\omega_h,\CC^3)$ $\rightarrow$  $(H^1(\omega_h,\CC^3))^*$ by:
\begin{equation}
\label{eq4b}
\langle  \mathscr{L}(\nu) \varphi, \psi \rangle =  l(\nu)(\varphi, \overline{\psi}),
\, \varphi, \psi \in H^1(\omega_h, \CC^3),
\end{equation}
where for $\nu \in \CC$, $l(\nu)$ is defined by
\begin{equation}
\label{eq13}
 l(\nu)(\varphi, \psi) = a(\varphi, \psi) + (-i \nu) b(\varphi, \psi) + (-i \nu)^2 c(\varphi, \psi),\, \varphi, \psi \in H^1(\omega_h, \CC^3)
\end{equation}
and the sesquilinear Hermitian forms $a$,$b$,$c$ are defined by: for $\varphi, \psi$ $\in H^1(\omega_h, \CC^3)$,
\begin{equation}
\label{eq13a}
a(\varphi, \psi) = a_0(\varphi, \psi) - \omega^2 m(\varphi, \psi),
\end{equation}
\begin{equation}
\label{eq14}
a_0(\varphi, \psi) = \int_{\omega_h} (\lambda + 2 \mu) \partial_1 \varphi_1 \partial_1\overline{\psi_1} + \mu(\partial_1 \varphi_2 \partial_1\overline{\psi_2} + \partial_1 \varphi_3 \partial_1\overline{\psi_3}),
\end{equation}
\begin{equation}
\label{eq14a}
m(\varphi, \psi) = \int_{\omega_h} \rho (\varphi_1 \overline{\psi_1}+ \varphi_2 \overline{\psi_2}+ \phi_3 \overline{\psi_3}),
\end{equation}
\begin{equation}
\label{eq15}
b(\varphi, \psi) = \int_{\omega_h} \lambda (-i \partial_1 \varphi_1 \overline{\psi_3} + i \varphi_3  \partial_1 \overline {\psi_1}) + \mu (-i \partial_1 \varphi_3 \overline{\psi_1} + i \varphi_1 \partial_1 \overline{\psi_3}),
\end{equation}
\begin{equation}
\label{eq16}
c(\varphi, \psi) = \int_{\omega_h} \mu (\varphi_1 \overline{\psi_1} + \varphi_2 \overline{\psi_2}) + (\lambda + 2 \mu)  \varphi_3 \overline{\psi_3}.
\end{equation}
Then if $u \in W^1_{\beta}(\Omega,\CC^3)$ and $v \in W^1_{-\beta}(\Omega,\CC^3)$,
\begin{eqnarray}
\label{eq4c}
<B_{\Omega,\beta}u,v>  &=& b_{\Omega}(u, \overline{v}) = \displaystyle \frac{1}{2 \pi i} \int_{l_{-\beta}} {l}(\nu) (\hat{u}(.,\nu),\overline{\hat{v}(.,-\nu)}) d \nu\nonumber \\
&=& \displaystyle \frac{1}{2 \pi i} \int_{l_{-\beta}} 
\langle  \mathscr{L}(\nu) \hat{u}(.,\nu),{\hat{v}(.,-\nu)} \rangle d \nu.
\end{eqnarray}
\end{lemma}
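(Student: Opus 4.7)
The first identity $\langle B_{\Omega,\beta}u,v\rangle = b_{\Omega}(u,\overline{v})$ is just the definition \eqref{eq4}, and the second of the two displayed equalities in \eqref{eq4c} is immediate from the definition \eqref{eq4b} of $\mathscr{L}(\nu)$; so the only real content is the Fourier--Laplace representation of $b_{\Omega}(u,\overline{v})$. My plan is to prove it first on the dense subspace ${\cal C}^{\infty}_0(\overline{\Omega},\CC^3)$ and then extend by continuity. The LHS is bounded by $C\|u\|_{\beta,1,\Omega}\|v\|_{-\beta,1,\Omega}$ thanks to \eqref{eq3}. For the RHS, the coefficients of $l(\nu)$ of orders $0,1,2$ in $(-i\nu)$ produce, after Cauchy--Schwarz in $s$, a bound of the form $C'\int_{\RR}\|\hat u(.,-\beta+is)\|_{1,\omega_h,|\nu|}\|\hat v(.,\beta+is)\|_{1,\omega_h,|\nu|}\,ds$, which by the Fourier--Laplace isomorphism between $W^1_{\pm\beta}(\Omega,\CC^3)$ and $\reallywidehat{W^1_{\pm\beta}(\Omega,\CC^3)}$ is controlled by $\|u\|_{\beta,1,\Omega}\|v\|_{-\beta,1,\Omega}$. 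Hence both sides are continuous sesquilinear forms on $W^1_{\beta}\times W^1_{-\beta}$, and density reduces the problem to smooth compactly supported $u,v$.

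For such $u,v$, the key observation is that $u$ and $v$ do not depend on $x_2$, so $\partial_2\equiv 0$ on both. I would then expand $b_{\Omega}(u,\overline{v})=\int_{\Omega}\sigma_{ij}(u)\,\eps_{ij}(v)-\omega^2\int_{\Omega}\rho u_iv_i$ using \eqref{eq4x}--\eqref{eq4ax} and sort the resulting bulk integrand into three groups according to the total number of $\partial_3$-derivatives it contains: terms with no $\partial_3$ (together with the mass term $-\omega^2\rho u_iv_i$), terms with exactly one $\partial_3$, and terms with two $\partial_3$'s. A direct inspection, component by component, matches these three groups respectively with the integrands of $a(\varphi,\psi)=a_0(\varphi,\psi)-\omega^2 m(\varphi,\psi)$ (see \eqref{eq13a}--\eqref{eq14a}), of the form $b(\varphi,\psi)$ from \eqref{eq15}, and of the form $c(\varphi,\psi)$ from \eqref{eq16}.

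I would then apply the Parseval formula \eqref{eq109} in the variable $x_3$ to each of these groups. Writing $v=\overline{\overline{v}}$, formula \eqref{eq109} gives $\int_{\RR}A(x_1,x_3)B(x_1,x_3)\,dx_3=\frac{1}{2\pi i}\int_{l_{-\beta}}\hat A(x_1,\nu)\,\hat B(x_1,-\nu)\,d\nu$; combined with \eqref{eq110}, each $\partial_3$ acting on $u$ contributes a factor $\nu$ after transform, and each $\partial_3$ acting on $v$ contributes $-\nu$. The no-$\partial_3$ terms yield exactly $a(\hat u(.,\nu),\overline{\hat v(.,-\nu)})$ once one notes that the conjugation convention in $a$ is absorbed by the conjugation inside the second argument $\overline{\hat v(.,-\nu)}$. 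The one-$\partial_3$ group generates a signed $\nu$-factor in front of an integral over $\omega_h$ which, after rewriting $\nu=i\cdot(-i\nu)$ to absorb the factors of $\pm i$ already present in the definition \eqref{eq15} of $b$, gives $(-i\nu)\,b(\hat u(.,\nu),\overline{\hat v(.,-\nu)})$. The two-$\partial_3$ group produces $-\nu^2=(-i\nu)^2$ times $c(\hat u(.,\nu),\overline{\hat v(.,-\nu)})$. Summing and recognizing \eqref{eq13} yields the claimed identity.

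The main obstacle is the last paragraph: purely algebraic but error-prone sign- and $i$-tracking in the cross terms, and making sure that the conjugation built into the sesquilinear forms $a,b,c$ is consistent with the conjugation appearing in the second argument $\overline{\hat v(.,-\nu)}$ on the right-hand side of \eqref{eq4c}. In particular, one must verify that the explicit $\pm i$ in \eqref{eq15}, combined with the transform-derivative factors $\pm\nu$, correctly reorganize into the overall coefficient $(-i\nu)$ in front of $b$; this is the only place where the precise choice of signs in the definition of $b$ becomes important, and any ambiguity here is resolved by checking one representative term (for instance, the contribution coming from $\mu\,\partial_1 u_3\,\partial_3 v_1$ inside $\sigma_{13}(u)\,\eps_{13}(v)$).
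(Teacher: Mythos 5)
Your proposal is correct and follows essentially the same route as the paper: expand $\sigma_{ij}(u)\eps_{ij}(\overline v)$ using $\partial_2\equiv 0$, apply the Parseval formula \eqref{eq109} together with $\widehat{\partial_3 u}=\nu\hat u$ from \eqref{eq110}, and match the resulting terms with $a$, $(-i\nu)b$ and $(-i\nu)^2 c$. The preliminary density/continuity step you add is harmless but not needed, since \eqref{eq109} and \eqref{eq110} already hold for arbitrary elements of the weighted spaces.
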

\begin{proof}
If $u \in W^1_{\beta}(\Omega,\CC^3)$ and $v \in W^1_{-\beta}(\Omega,\CC^3)$, 
\begin{eqnarray}
\label{eq5x}
\sigma_{ij}({u}) \eps_{ij}(\overline{v}) &= &\lambda {(\dive {u})} {(\dive \overline{v})} + 2 \mu \eps_{ij}({u}) \eps_{ij}(\overline{v}) \nonumber \\
&= &\lambda {(\partial_1 u_1 + \partial_3 u_3) (\partial_1 \overline{v}_1 + \partial_3 \overline{v}_3)}\nonumber \\
&+ & \mu \{ 2 \partial_1 u_1 \partial_1 \overline{v}_1 + 2 \partial_3 u_3 \partial_3 \overline{v}_3 \nonumber \\
&+ & \partial_1 u_2 \partial_1 \overline{v}_2
+(\partial_1 u_3 + \partial_3 u_1) (\partial_1 \overline{v}_3 + \partial_3 \overline{v}_1)+ \partial_3 u_2 \partial_3 \overline{v}_2 \}
\end{eqnarray}
and if $\nu \in l_{-\beta}$, 
\begin{equation}
\label{eq7}
\widehat{{\dive {u}}} = 
\partial_1 \hat{u}_1 + \nu \hat{u}_3,
\end{equation}
\begin{equation}
\label{eq8}
\widehat{\eps_{11} ({u})} = \partial_1 \hat{u}_1 ,\, \widehat{\eps_{22} ({u})} =  0, \, \widehat{ \eps_{33} ({u})} = \nu \hat{u}_3,
\end{equation}
\begin{equation}
\label{eq9}
\widehat{\eps_{12} ({u})} = 1/2 \partial_1 \hat{u}_2,\, \widehat{\eps_{13} ({u})} = 1/2 (\partial_1 \hat{u}_3 +\nu \hat{u}_1), \, \widehat{\eps_{23} ({u})} = 1/2 \nu \hat{u}_2,
\end{equation}
consequently in view of \eqref{eq109} we get \eqref{eq4c}.
\end{proof}
%

%
\begin{lemma}
\label{isom}
There exist $s_0 >0$ and $\alpha>0$ such that if $\nu=t+ is$, $|s| \geq s_0$ and $|t| \leq \alpha |s|$, 
$\mathscr{L}(\nu)$ is an isomorphism from 
$H^1(\omega_h, \CC^3)$ onto $(H^1(\omega_h, \CC^3))^*$. Moreover there exists $C>0$ such that for these values of $\nu$,
\begin{equation}
\label{eq103h}
||\varphi||_{1,\omega_h,|\nu|} \leq C ||\mathscr{L}(\nu) \varphi||_{1,\omega_h,|\nu|,*},  \varphi \in H^1(\omega_h, \CC^3).
\end{equation}
\end{lemma}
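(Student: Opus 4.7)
The plan is to apply the Lax--Milgram theorem to the sesquilinear form $l(\nu)$ on $H^1(\omega_h,\CC^3)$ equipped with the parameter-dependent norm $\lVert\cdot\rVert_{1,\omega_h,|\nu|}$. Continuity with constant uniform in $\nu$, namely $|l(\nu)(\varphi,\psi)|\leq C\lVert\varphi\rVert_{1,\omega_h,|\nu|}\lVert\psi\rVert_{1,\omega_h,|\nu|}$, is routine from \eqref{eq13}--\eqref{eq16} by Cauchy--Schwarz, since in every term of $l(\nu)$ each factor of $\nu$ is matched by at most one $L^2$-factor (rather than a derivative). The substance of the lemma is therefore the coercivity estimate $\mathrm{Re}\,l(\nu)(\varphi,\varphi)\geq C_0\lVert\varphi\rVert_{1,\omega_h,|\nu|}^2$, uniform on the wedge $|s|\geq s_0$, $|t|\leq\alpha|s|$. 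Once coercivity is in hand, Lax--Milgram yields that $\mathscr{L}(\nu)$ is an isomorphism, and \eqref{eq103h} follows from $C_0\lVert\varphi\rVert_{1,\omega_h,|\nu|}^2 \leq |\langle\mathscr{L}(\nu)\varphi,\varphi\rangle| \leq \lVert\mathscr{L}(\nu)\varphi\rVert_{1,\omega_h,|\nu|,*}\lVert\varphi\rVert_{1,\omega_h,|\nu|}$.

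I would treat first the axial case $\nu=is$, $s\in\RR$. Since $a,b,c$ are Hermitian, $a(\varphi,\varphi)$, $b(\varphi,\varphi)$, $c(\varphi,\varphi)$ are real, and $\mathrm{Re}\,l(is)(\varphi,\varphi)=a_0(\varphi,\varphi)-\omega^2 m(\varphi,\varphi)+s\,b(\varphi,\varphi)+s^2 c(\varphi,\varphi)$. A direct calculation parallel to \eqref{eq5x} identifies the combination $a_0+s\,b+s^2 c$ with the elastic strain energy $\int_{\omega_h}\bigl[\lambda|\mathrm{tr}\,\tilde\eps|^2+2\mu\sum_{i,j}|\tilde\eps_{ij}|^2\bigr]$ of the separated profile $u(x_1,x_3)=e^{isx_3}\varphi(x_1)$, where $\tilde\eps_{ij}$ denotes the strain components of $u$ with the oscillating factor $e^{isx_3}$ stripped out. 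By the ellipticity of the Lam\'e tensor (valid thanks to $3\lambda+2\mu>0$, $\mu>0$), this is bounded below by $c_1\int_{\omega_h}|\tilde\eps|^2$. Viewing $u$ as an $x_2$-independent $H^1$-vector field on the fixed box $\omega_h\times(0,1)\times(0,1)$, the second Korn inequality gives $\int_{\omega_h}(|\partial_1\varphi|^2+s^2|\varphi|^2)\leq C_K\bigl(\int_{\omega_h}|\tilde\eps|^2+\int_{\omega_h}|\varphi|^2\bigr)$, and after absorbing both $C_K\int|\varphi|^2$ and the zeroth-order contribution $\omega^2 m(\varphi,\varphi)$ into $s^2\int|\varphi|^2$ for $|s|\geq s_0$ large, one obtains $\mathrm{Re}\,l(is)(\varphi,\varphi)\geq\kappa\lVert\varphi\rVert_{1,\omega_h,|s|}^2$.

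To extend to the wedge, I compute $\mathrm{Re}\,l(\nu)(\varphi,\varphi)-\mathrm{Re}\,l(is)(\varphi,\varphi)=-t^2 c(\varphi,\varphi)$, which is bounded by $(\lambda+2\mu)\alpha^2 s^2\lVert\varphi\rVert_{0,\omega_h}^2$. Choosing $\alpha$ small enough, this perturbation is absorbed into the $s^2$-part of the previous estimate; and since $|\nu|^2\leq (1+\alpha^2)s^2$ on the wedge, the norms $\lVert\cdot\rVert_{1,\omega_h,|s|}$ and $\lVert\cdot\rVert_{1,\omega_h,|\nu|}$ are uniformly equivalent, completing the proof. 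The main obstacle is the coercivity itself: a naive Young's inequality on the cross term $s\,b(\varphi,\varphi)$ cannot simultaneously leave positive margins in both $\lVert\partial_1\varphi\rVert_0^2$ and $s^2\lVert\varphi\rVert_0^2$, because the relevant partial combination $\mu s^2\lVert\varphi_1\rVert_0^2+\mu\lVert\partial_1\varphi_3\rVert_0^2-2s\mu\,\mathrm{Im}\int\varphi_1\overline{\partial_1\varphi_3}=4\mu\lVert\tilde\eps_{13}\rVert_0^2$ is only positive semidefinite. The remedy — recognizing $a_0+s\,b+s^2 c$ as a genuine elastic strain energy and using Korn to recover the full parameter-dependent $H^1$-norm — is the crucial step.
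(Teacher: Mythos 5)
Your proposal is correct and follows essentially the same route as the paper: identify $a+s\,b+s^2c$ with the elastic energy of the separated profile $u(x_1,x_3)=\varphi(x_1)e^{isx_3}$ on a fixed bounded box, obtain coercivity in the $|s|$-weighted norm from the Korn/G\aa rding inequality there, then pass to the wedge $|t|\leq\alpha|s|$ by absorbing the $-t^2c(\varphi,\varphi)$ perturbation and conclude with Lax--Milgram. The only cosmetic differences are that the paper works on the two-dimensional box $\omega_h\times(0,1)$ and invokes the Korn inequality in its G\aa rding form \eqref{eq31} directly, whereas you split it into Lam\'e-tensor ellipticity plus the second Korn inequality.
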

%
\begin{proof}
We have
\begin{equation}
\label{eq13b}
 l(i s)(\varphi, \psi) = a(\varphi, \psi) + s b(\varphi, \psi) + s^2 c(\varphi, \psi),\, \varphi, \psi \in H^1(\omega_h, \CC^3),\, s \in \RR.
\end{equation}
But with the notation $\widetilde{\Omega} = \omega_h \times (0,1)$
\begin{equation}
\label{eq30}
a(\varphi,\varphi) + s b(\varphi,\varphi) + s^2 c(\varphi,\varphi)= \int_{\widetilde{\Omega}} (\sigma_{ij}(u)\eps_{ij}(\overline{u}) - \omega^2 \rho u_i \overline{u_i}),\, \varphi \in H^1(\omega_h,\CC^3),\, s \in \RR
\end{equation}
where 
\begin{equation}
\label{eq30a}
u(x_1,x_3) = \varphi(x_1) e^{i s x_3},\, (x_1,x_3) \in \widetilde{\Omega}.
\end{equation}
Owing to Korn inequality, there exist $C_1,\, C_2 >0$ satisfying
\begin{equation}
\label{eq31}
\int_{\widetilde{\Omega}} \sigma_{ij}(u)\eps_{ij}(\overline{u}) \geq
C_1 ||u||_{1,\widetilde{\Omega}}^2 -C_2 ||u||_{0,\widetilde{\Omega}}^2,\, u \in H^1(\widetilde{\Omega},\CC^3).
\end{equation}
But if $u$ is given by \eqref{eq30a},
\begin{equation}
\label{eq32}
|u|_{1,\widetilde{\Omega}}^2 = |\varphi|_{1,\omega_h}^2 + s^2 ||\varphi||_{0,\omega_h}^2
\end{equation}
and
\begin{equation}
\label{eq33}
||u||_{0,\widetilde{\Omega}}^2 =||\varphi||_{0,\omega_h}^2.
\end{equation}
Owing to \eqref{eq30}, \eqref{eq31}, \eqref{eq32} and \eqref{eq33}, there exist $C > 0$ and $s_0 \geq 1$ such that if $s \in \RR$, $|s| \geq s_0$ and  $\varphi \in H^1(\omega_h,\CC^3)$,
\begin{eqnarray}
\label{eq35}
a(\varphi,\varphi) + s b(\varphi,\varphi) + s^2 c(\varphi,\varphi) &\geq& C (|\varphi|_{1,\omega_h}^2 + s^2 ||\varphi||_{0,\omega_h}^2) \nonumber \\
&=& C ||\varphi||_{1,\omega_h,|s|}^2 
\geq C ||\varphi||_{1,\omega_h}^2 .
\end{eqnarray}
Let us now examine the behavior of $\mathscr{L}(\nu)$ for $\nu \in \CC$. If $\nu \in \CC$, $\nu = t+ i s$, then for $\varphi \in H^1(\omega_h,\CC^3)$,
\begin{eqnarray}
\label{eq103a}
&&{\mbox {Re}}\left\{a(\varphi,\varphi) + (-i \nu) b(\varphi,\varphi) + (-i \nu)^2 c(\varphi,\varphi)\right\}  = \nonumber \\ 
&&a(\varphi,\varphi) + s b(\varphi,\varphi) + (s^2 - t^2) c(\varphi,\varphi).
\end{eqnarray}
Let us choose $0 < \alpha <1$. Due to \eqref{eq35}, if $|s| \geq s_0$ and $|t| \leq \alpha |s|$,
\begin{eqnarray}
\label{eq103b}
&&a(\varphi,\varphi) + s b(\varphi,\varphi) + (s^2 - t^2) c(\varphi,\varphi)  \geq \nonumber \\
&&a(\varphi,\varphi) + s b(\varphi,\varphi) + (1- \alpha^2)s^2 c(\varphi,\varphi)  \geq \nonumber \\
&&C (|\varphi|_{1,\omega_h}^2 + s^2 ||\varphi||_{0,\omega_h}^2)- \alpha^2s^2 c(\varphi,\varphi),\,  \varphi \in H^1(\omega_h,\CC^3).
\end{eqnarray}
Thus there exist $\alpha$, $0 < \alpha <1$ and $C>0$ such that if $\nu= t+i s$, $|s| \geq s_0$ and $|t| \leq \alpha |s|$ (so that $|\nu|^2 \leq 2 |s|^2$), then for $\varphi \in H^1(\omega_h,\CC^3)$,
\begin{eqnarray}
\label{eq103c}
&&{\mbox {Re}}\left\{a(\varphi,\varphi) + (-i \nu) b(\varphi,\varphi) + (-i \nu)^2 c(\varphi,\varphi)\right\}
\geq \nonumber \\
&&2C ||\varphi||_{1,\omega_h, |s|}^2
\geq C ||\varphi||_{1,\omega_h, |\nu|}^2
\geq C ||\varphi||_{1,\omega_h}^2.
\end{eqnarray}
In view of  \eqref{eq4b}, \eqref{eq13}, \eqref{eq103c} and Lax-Milgram theorem the lemma follows.
\end{proof}
%
\begin{lemma}
\label{lemma_spectrum}
The polynomial operator pencil $\mathscr{L}(\nu)$ is Fredholm and therefore the spectrum of $\mathscr{L}(\nu)$ consists of isolated eigenvalues of finite algebraic multiplicity.
\end{lemma}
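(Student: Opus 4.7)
The plan is to reduce the Fredholm property of $\mathscr{L}(\nu)$ to the isomorphism property established in Lemma \ref{isom} by a compact perturbation argument, and then invoke the general theory of analytic Fredholm operator pencils collected in Appendix \ref{oppenc} to obtain the spectral statement.

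First I would pick a reference value $\nu_0$ in the region described by Lemma \ref{isom} (for instance $\nu_0 = i s_0$), so that $\mathscr{L}(\nu_0): H^1(\omega_h,\CC^3) \to (H^1(\omega_h,\CC^3))^*$ is an isomorphism and in particular Fredholm of index zero. Then I would use the polynomial structure of $l(\nu)$ to write
\[
\mathscr{L}(\nu) - \mathscr{L}(\nu_0) = (-i\nu + i\nu_0)\,\mathscr{B} + \bigl((-i\nu)^2 - (-i\nu_0)^2\bigr)\,\mathscr{C},
\]
where $\mathscr{B}$ and $\mathscr{C}$ are the continuous operators $H^1(\omega_h,\CC^3) \to (H^1(\omega_h,\CC^3))^*$ induced by the sesquilinear forms $b$ and $c$ of \eqref{eq15}--\eqref{eq16} in exactly the way $\mathscr{L}(\nu)$ is induced by $l(\nu)$ via \eqref{eq4b}.

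The core of the argument is then to show that $\mathscr{B}$ and $\mathscr{C}$ are compact. The form $c$ involves no derivatives, so $\mathscr{C}$ factors as $H^1(\omega_h,\CC^3) \xrightarrow{\iota} L^2(\omega_h,\CC^3) \to L^2(\omega_h,\CC^3) \xrightarrow{\iota^*} (H^1(\omega_h,\CC^3))^*$, where $\iota$ is the Rellich-compact embedding and the middle map is bounded multiplication; hence $\mathscr{C}$ is compact. Each term in the form $b$ carries a single factor $\partial_1$ sitting either on the $\varphi$-slot or on the $\psi$-slot, so $\mathscr{B}$ is a sum of operators of the form $\iota^* \circ \partial_1$ (with $\partial_1: H^1 \to L^2$ bounded and $\iota^*: L^2 \to (H^1)^*$ compact) and of the form $(\partial_1)^* \circ \iota$ (with $\iota$ compact and $(\partial_1)^*: L^2 \to (H^1)^*$ bounded), possibly composed with bounded multiplications. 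Each such composition is compact, so $\mathscr{B}$ is compact and $\mathscr{L}(\nu) - \mathscr{L}(\nu_0)$ is compact for every $\nu \in \CC$.

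From here the conclusion is essentially mechanical. Since compact perturbations of Fredholm operators remain Fredholm with the same index, $\mathscr{L}(\nu)$ is Fredholm of index zero for every $\nu \in \CC$. Because $\nu \mapsto \mathscr{L}(\nu)$ is polynomial, hence analytic in $\nu$, and $\mathscr{L}(\nu_0)$ is invertible, the analytic Fredholm theorem for operator pencils recalled in Appendix \ref{oppenc} yields at once that $\{\nu \in \CC : \mathscr{L}(\nu) \text{ is not invertible}\}$ is a discrete subset of $\CC$ and that each such $\nu$ is an eigenvalue of finite algebraic multiplicity. The only genuinely technical step is the compactness of the first-order part $\mathscr{B}$; everything else amounts to assembling Lemma \ref{isom} with standard facts on analytic Fredholm families.
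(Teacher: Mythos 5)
Your proof is correct, and its overall skeleton (compact perturbation of the invertible operator from Lemma \ref{isom}, then the Fredholm-pencil result of Appendix \ref{oppenc}) is exactly the paper's. The one place where you genuinely diverge is the compactness of $\mathscr{B}$, which is indeed the only nontrivial step. The paper first integrates $b(\varphi,\psi)$ by parts so that all derivatives land on $\varphi$, at the price of boundary terms $[\varphi_3\overline{\psi_1}]_{-h}^{h}$ and $[\varphi_1\overline{\psi_3}]_{-h}^{h}$; it then controls those boundary terms by the trace theorem with $\|\psi\|_{p,\omega_h}$ for $1/2<p<1$, concludes that $\mathscr{B}$ is continuous from $H^1(\omega_h,\CC^3)$ into $(H^p(\omega_h,\CC^3))^*$, and gets compactness from the compact embedding $(H^p)^*\hookrightarrow(H^1)^*$. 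You avoid the integration by parts altogether: since each term of $b$ carries its single $\partial_1$ on only one of the two slots, you factor each term as a bounded map composed with the Rellich-compact embedding $H^1(\omega_h)\hookrightarrow L^2(\omega_h)$ (or its adjoint) on the derivative-free slot. Your route is more elementary — no trace theorems, no fractional Sobolev spaces — and exploits the specific bilinear structure of $b$; the paper's route is the one that generalizes to forms where a derivative may sit on both slots simultaneously, which is presumably why it is phrased that way, but for this particular $b$ your argument is cleaner. Both proofs then finish identically via Lemma \ref{isom} and Proposition A.8.4 of Kozlov--Maz'ya as recalled in the appendix.
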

%
\begin{proof}
Since the sesquilinear Hermitian forms $a$, $b$, $c$ are continuous on $H^1(\omega_h, \CC^3)$, they define continuous operators $\mathscr{A}$, $\mathscr{B}$, $\mathscr{C}$ from $H^1(\omega_h, \CC^3)$ into $(H^1(\omega_h, \CC^3))^*$ satisfying 
\begin{equation}
\label{eq220}
\langle \mathscr{A} \varphi, \psi \rangle = a(\varphi, \overline{\psi}), \,\langle \mathscr{B} \varphi, \psi \rangle = b(\varphi, \overline{\psi}), \, 
\langle \mathscr{C} \varphi, \psi \rangle = c(\varphi, \overline{\psi}), \,  \varphi, \psi \in H^1(\omega_h, \CC^3)
\end{equation}
and we have 
\begin{equation}
\label{eq221}
\mathscr{L}(\nu) = \mathscr{A} + (-i \nu) \mathscr{B} + (-i \nu)^2 \mathscr{C},  \nu \in \CC.
\end{equation}
On the other hand,
\begin{eqnarray}
\label{eq222}
b(\varphi, \psi) 
&=& i\lambda [\varphi_3 \overline{\psi_1}]_{-h}^h 
+ i\mu [\varphi_1 \overline{\psi_3}]_{-h}^h \nonumber \\
&-& \int_{\omega_h} i(\lambda+ \mu) ((\partial_1 \varphi_1) \overline{\psi_3} 
+(\partial_1\varphi_3) \overline{\psi_1}),\, \varphi, \psi \in H^1(\omega_h, \CC^3).
\end{eqnarray}
There exists $C>0$ such that the last term  in the right hand side of \eqref{eq222} is  majorized by
\begin{equation}
\label{eq224}
C ||\varphi||_{1, \omega_h} ||\psi||_{0, \omega_h},\,  \varphi, \psi \in H^1(\omega_h, \CC^3)
\end{equation}
and thanks to the trace theorems (see \cite{Ciarlet}, p.114) for $p \in \RR$ satisfying $1/2 < p < 1$, there exists $C>0$ such that the first two terms in the right hand side of \eqref{eq222} are majorized by
\begin{equation}
\label{eq223}
C ||\varphi||_{1, \omega_h} ||\psi||_{p, \omega_h},\, \varphi, \psi \in H^1(\omega_h, \CC^3).
\end{equation}
Given that for $p \in \RR$, $1/2 < p < 1$ the embedding from $H^1(\omega_h, \CC^3)$ into $H^p(\omega_h, \CC^3)$ is compact, by duality the embedding from $(H^p(\omega_h, \CC^3))^*$ into $(H^1(\omega_h, \CC^3))^*$ is also compact, and since by \eqref{eq224} and \eqref{eq223} $\mathscr{B}$  is continuous from $H^1(\omega_h, \CC^3)$ into $(H^p(\omega_h, \CC^3))^*$, $\mathscr{B}$ is a compact operator from $H^1(\omega_h, \CC^3)$ into $(H^1(\omega_h, \CC^3))^*$.
It is easily seen that $\mathscr{C}$ is also a compact operator from $H^1(\omega_h, \CC^3)$ into $(H^1(\omega_h, \CC^3))^*$.
Due to Lemma \ref{isom} the polynomial operator pencil ${\cal L}(\nu)$ has at least one regular point. Proposition A.8.4 of \cite{Kozlov-Mazya} recalled in appendix \ref{oppenc} shows the lemma.
\end{proof}
%

It is easily seen that the spectrum of ${\cal L}(\nu)$ is divided in two parts, one concerning the first and the third components of the displacement field (the corresponding eigenvectors and generalized eigenvectors are called Lamb modes and associated modes) and one concerning the second component of the displacement field.

In view of Lemmas \ref{isom} and \ref{lemma_spectrum}, except for a set of $\beta \in \RR$ at most denumerable, $\mathscr{L}(\nu)$ has no eigenvalue on the line $Re \nu = - \beta$.
From the previous lemmas we get
\begin{theorem}
\label{thisom}
Let $\beta \in \RR$ be such that $\mathscr{L}(\nu)$ has no eigenvalue on the line $Re \nu = - \beta$.
Then the operator $B_{\Omega,\beta}$: $W^1_{\beta}(\Omega,\CC^3)$ $\rightarrow$ $({W}^1_{-\beta}(\Omega,\CC^3))^*$ is an isomorphism.
\end{theorem}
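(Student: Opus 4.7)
\textbf{Proof plan for Theorem \ref{thisom}.}

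The strategy is to conjugate $B_{\Omega,\beta}$ via the partial Fourier--Laplace transform and reduce the assertion to a uniform invertibility statement for the operator pencil $\mathscr{L}(\nu)$ on the line $l_{-\beta}$. Concretely, by the identity \eqref{eq4c} of Lemma \ref{lap}, the bilinear duality $\langle B_{\Omega,\beta} u, v\rangle$ is exactly the pairing of the fiberwise operator $\mathscr{L}(\nu)$ acting on $\hat u(\cdot,\nu)$ with $\hat v(\cdot,-\nu)$. Since ${\cal L}_{x\to\nu}$ is an isomorphism between $W^1_\beta(\Omega,\CC^3)$ and $\reallywidehat{W^1_\beta(\Omega,\CC^3)}$ (and similarly on the dual side via \eqref{eq106}), proving that $B_{\Omega,\beta}$ is an isomorphism reduces to showing that the multiplier $\nu \mapsto \mathscr{L}(\nu)$ defines an isomorphism from $L^2$-type sections over $l_{-\beta}$ valued in $H^1(\omega_h,\CC^3)$ onto the corresponding dual sections, the relevant norms being those induced by $\|\cdot\|_{1,\omega_h,|\nu|}$.

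First I would verify that $\mathscr{L}(\nu)$ is invertible at \emph{every} point of $l_{-\beta}$: Lemma \ref{lemma_spectrum} says its spectrum is discrete, and the assumption on $\beta$ rules out eigenvalues on $l_{-\beta}$, so each $\mathscr{L}(\nu)$ is bijective (a Fredholm operator of index $0$ with trivial kernel, this index being $0$ thanks to the isomorphism regime provided by Lemma \ref{isom}). The central analytic step is then to show that
\begin{equation}
\sup_{\nu\in l_{-\beta}} \|\mathscr{L}(\nu)^{-1}\|_{(H^1(\omega_h,\CC^3))^* \to H^1(\omega_h,\CC^3)}^{|\nu|} < +\infty,
\end{equation}
where the superscript $|\nu|$ indicates that the operator norm is computed with respect to the $\nu$-dependent norms $\|\cdot\|_{1,\omega_h,|\nu|}$ and $\|\cdot\|_{1,\omega_h,|\nu|,*}$. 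This I would split into two regimes. For $|\mathrm{Im}\,\nu| \geq \max(s_0,\beta/\alpha)$ (with $s_0,\alpha$ given by Lemma \ref{isom}) the line $l_{-\beta}$ lies inside the cone $|t| \leq \alpha|s|$, so Lemma \ref{isom} directly yields the required uniform bound \eqref{eq103h}. For the complementary \emph{compact} piece of the line, $\nu \mapsto \mathscr{L}(\nu)^{-1}$ is continuous (by analyticity of the pencil and the fact that inversion is continuous on the open set of regular points), and on this compact set the norms $\|\cdot\|_{1,\omega_h,|\nu|}$ are equivalent to $\|\cdot\|_{1,\omega_h}$ uniformly in $\nu$; continuity plus compactness then gives a uniform bound.

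Once the uniform bound on $\mathscr{L}(\nu)^{-1}$ is in hand, the inverse of $B_{\Omega,\beta}$ is constructed fiberwise: given $f \in (W^1_{-\beta}(\Omega,\CC^3))^*$, set $\hat u(\cdot,\nu) := \mathscr{L}(\nu)^{-1} \hat f(\cdot,\nu)$ on $l_{-\beta}$, and define $u := {\cal L}_{x\to\nu}^{-1} \hat u$. The uniform bound, combined with the norm formulas \eqref{eq101a} and \eqref{eq105a}, gives $\|u\|_{\beta,1,\Omega} \leq C \|f\|_{-\beta,1,\Omega,*}$, and the identity \eqref{eq4c} (together with \eqref{eq106}) ensures $B_{\Omega,\beta} u = f$. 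Injectivity is read off identically: $B_{\Omega,\beta} u = 0$ forces $\mathscr{L}(\nu)\hat u(\cdot,\nu) = 0$ for a.e.\ $\nu \in l_{-\beta}$, hence $\hat u = 0$ and $u = 0$.

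The main obstacle is the uniform boundedness of $\mathscr{L}(\nu)^{-1}$ on all of $l_{-\beta}$ in the $\nu$-weighted norms, which is what makes the fiberwise inverse assemble into a bounded operator between the correct weighted Sobolev spaces; Lemma \ref{isom} handles the large-$|\mathrm{Im}\,\nu|$ regime and is the crux of the argument, while the compact part is standard but must be carried out with the correct equivalence of norms.
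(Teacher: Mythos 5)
Your proposal follows essentially the same route as the paper's proof: reduce via the partial Fourier--Laplace transform and the identity \eqref{eq4c} to the fiberwise equation $\mathscr{L}(\nu)\hat u(\cdot,\nu)=\hat f(\cdot,\nu)$ on $l_{-\beta}$, obtain pointwise invertibility from Lemma \ref{lemma_spectrum} and the hypothesis on $\beta$, and get the uniform bound in the $|\nu|$-weighted norms by combining Lemma \ref{isom} for large $|\mathrm{Im}\,\nu|$ with boundedness of $\mathscr{L}(\nu)^{-1}$ on the remaining compact segment. The argument is correct and matches the paper's.
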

%
\begin{proof}
Let $f \in (W_{-\beta}^1 (\Omega,\CC^3))^*$. Let us try to solve 
\begin{equation}
\label{eq103e}
B_{\Omega,\beta}u = f,
\end{equation}
that is: find $u \in W_{\beta}^1 (\Omega,\CC^3)$ such that 
for $v \in W_{-\beta}^1 (\Omega,\CC^3)$,
\begin{equation}
\label{eq103d}
\langle B_{\Omega,\beta}u, v \rangle = b_{\Omega}(u,\overline{v}) = \langle f, v\rangle
\end{equation}
or from \eqref{eq106} and \eqref{eq4c}
\begin{eqnarray}
\label{eq103f}
\displaystyle \frac{1}{2 \pi i} \int_{l_{-\beta}} {l}(\nu) (\hat{u}(.,\nu),\overline{\hat{v}(.,-\nu)}) d \nu &=&
\displaystyle \frac{1}{2 \pi i} \int_{l_{-\beta}} 
\langle  \mathscr{L}(\nu) \hat{u}(.,\nu),\hat{v}(.,-\nu) \rangle d \nu \nonumber \\ 
&=& \frac{1}{2 \pi i} \int_{l_{-\beta}} \langle \hat{f}(., \nu), \hat{v}(.,- \nu) \rangle d \nu
\end{eqnarray}
which is equivalent to
\begin{equation}
\label{eq103g}
\mathscr{L}(\nu) \hat{u}(.,\nu) = \hat{f}(., \nu),  \nu \in l_{-\beta}.
\end{equation}
As $\beta \in \RR$ is such that $\mathscr{L}(\nu)$ has no eigenvalue on the line $Re \nu = - \beta$, for $\nu \in l_{-\beta}$, $\mathscr{L}(\nu)$ is an isomorphism 
from $H^1(\omega_h, \CC^3)$ onto $(H^1(\omega_h, \CC^3))^*$ (Lemma \ref{lemma_spectrum}). Consequently \eqref{eq103g} is equivalent to
\begin{equation}
\label{eq103i}
\hat{u}(.,\nu) = (\mathscr{L}(\nu))^{-1}\hat{f}(., \nu), \nu \in l_{-\beta}.
\end{equation}
Taking into account Lemma \ref{isom} there exists $\nu_{\beta} >0$ such that estimate \eqref{eq103h} is true for $\nu \in l_{-\beta} \setminus [-\beta - i \nu_{\beta}, -\beta + i \nu_{\beta}]$.
On the other hand $(\mathscr{L}(\nu))^{-1}$ is holomorphic in a neighbouhood of $[-\beta - i \nu_{\beta}, -\beta + i \nu_{\beta}]$, then its norm is bounded in this neighbouhood, therefore there exists $C>0$ satisfying
\begin{equation}
\label{eq103j}
||\hat{u}(.,\nu)||_{1,\omega_h,|\nu|} \leq C ||\hat{f}(., \nu)||_{1,\omega_h,|\nu|,*}, \nu \in l_{-\beta}.
\end{equation}
Since $f \in ({W}^1_{-\beta}(\Omega,\CC^3))^*$, from \eqref{eq105} we have
\begin{equation}
\label{eq107a}
\frac{1}{2\pi i} \int_{l_{-\beta}} \lVert \hat{f}(.,\nu) \rVert^2_{1,\omega_h,|\nu|,*} d \nu <+\infty
\end{equation}
accordingly thanks to \eqref{eq101}, \eqref{eq104}, \eqref{eq103j}, \eqref{eq107a} $u$ defined by
\begin{equation}
\label{eq104a}
u(.,x) = ({\cal L}^{-1}_{x \rightarrow \nu} \hat{u})(.,x) = \frac{1}{2 \pi i} \int_{l_{-\beta}} e^{\nu x} \hat{u}(.,\nu) d \nu
= 
 \frac{1}{2 \pi i} \int_{l_{-\beta}} e^{\nu x}(\mathscr{L}(\nu))^{-1}\hat{f}(., \nu)d \nu
\end{equation}
is the unique solution to \eqref{eq103e}. 
\end{proof}

Define the sesquilinear form $\langle .,.\rangle_1$ on $H^1(\omega_h, \CC^3) \times (H^1(\omega_h, \CC^3))^*$ by
\begin{equation}
\label{eq104b}
\langle \varphi,\varphi^*\rangle_1 = \overline{\langle \varphi^*,\overline{\varphi}\rangle},\,  \varphi \in H^1(\omega_h, \CC^3),  \varphi^* \in (H^1(\omega_h, \CC^3))^*.
\end{equation}
This sesquilinear form satisfies conditions \eqref{eq204} and \eqref{eq205} of appendix \ref{oppenc}.

If $A$: $H^1(\omega_h, \CC^3)$ $\rightarrow$ $(H^1(\omega_h, \CC^3))^*$ is one of the operators $\mathscr{A}$,  $\mathscr{B}$, $\mathscr{C}$ of \eqref{eq220}, $A$ satisfies:
\begin{equation}
\label{eq104c}
\langle {A} \psi, \varphi \rangle = \overline{\langle {A} \overline{\varphi}, \overline{\psi} \rangle}, \,  \varphi, \psi \in H^1(\omega_h, \CC^3).
\end{equation}
Let us define a duality between $(H^1(\omega_h, \CC^3))^*$ and $(H^1(\omega_h, \CC^3))^{**}$ (that we will identify with $H^1(\omega_h, \CC^3)$).
Define the sesquilinear form $\langle .,.\rangle_2$ on $(H^1(\omega_h, \CC^3))^* \times (H^1(\omega_h, \CC^3))^{**}$ by: for all $\varphi^* \in (H^1(\omega_h, \CC^3))^*$, $\varphi^{**} \in (H^1(\omega_h, \CC^3))^{**} \cong  H^1(\omega_h, \CC^3)$,
\begin{equation}
\label{eq104d}
\langle \varphi^*,\varphi^{**}\rangle_2 = {\langle \varphi^*,\overline{\varphi^{**}}\rangle} =
\overline{\langle \varphi^{**},\varphi^{*}\rangle_1}.
\end{equation}
This sesquilinear form satisfies the conditions \eqref{eq204} and \eqref{eq205} of appendix \ref{oppenc}.
Let us determine the adjoint in the sense of \eqref{eq206} of an operator $A$: $H^1(\omega_h, \CC^3)$ $\rightarrow$ $(H^1(\omega_h, \CC^3))^*$ satisfying \eqref{eq104c}.
The definition \eqref{eq206} of the adjoint $A^*$ reads as follows in this case:
\begin{equation}
\label{eq104e}
\overline{\langle A^* \psi, \overline{\varphi}\rangle} = \langle A \varphi, \overline{\psi}\rangle,\,  \varphi \in H^1(\omega_h, \CC^3),  \psi \in H^1(\omega_h, \CC^3).
\end{equation}
But \eqref{eq104c} and \eqref{eq104e} imply $A$ = $A^*$ consequently (see \eqref{eq207}):
\begin{equation}
\label{eq104e1}
{\mathscr L}^*(\nu)= {\mathscr L}(-\nu),\,  \nu \in \CC.
\end{equation}
namely
\begin{equation}
\label{eq104e2}
\langle {\mathscr L}(\nu) \varphi, \psi \rangle_2 =\overline{\langle {\mathscr L}(-\overline{\nu}) \psi, \varphi \rangle_2},\,  \nu \in \CC ,\,  \varphi \in H^1(\omega_h, \CC^3),  \psi \in H^1(\omega_h, \CC^3).
\end{equation}
\section{Asymptotics in the strip $\Omega$.}
\label{strip-Asymptotics}
\setcounter{equation}{0}
This section is devoted to the asymptotics of the solution to Eq. \eqref{eq103e} (Proposition \ref{pro2sol}).
Let $\beta \in \RR$. Define the sesquilinear forms $b_1$ and $b_2$ on  $H^1(\omega_h, \CC^3)$ by
\begin{equation}
\label{eq15a}
b_1(\varphi, \psi) = \lambda \int_{\omega_h} \varphi_3  \partial_1 \overline {\psi_1} + \mu \int_{\omega_h} \varphi_1 \partial_1 \overline{\psi_3},\, \varphi, \psi \in H^1(\omega_h, \CC^3),
\end{equation}
\begin{equation}
\label{eq15b}
b_2(\varphi, \psi) = \overline{b_1(\psi, \varphi)} = \lambda \int_{\omega_h} (\partial_1 \varphi_1) \overline {\psi_3} + \mu \int_{\omega_h} (\partial_1\varphi_3) \overline{\psi_1},\, \varphi, \psi \in H^1(\omega_h, \CC^3),
\end{equation}
so that the sesquilinear Hermitian form $b$ defined in \eqref{eq15} is equal to $i(b_1 - b_2)$.
With the notations \eqref{eq2a}, \eqref{eq13a}, \eqref{eq14}, \eqref{eq14a}, \eqref{eq16}, \eqref{eq15a}, \eqref{eq15b}, and owing to \eqref{eq5x}, for $u \in W^1_{\beta}(\Omega,\CC^3)$ and $v \in W^1_{-\beta}(\Omega,\CC^3)$,
\begin{eqnarray}
\label{eq15c}
b_{\Omega}(u,v)&= & \displaystyle{\int_{-\infty}^{+\infty} \{a(u(.,x_3), v(.,x_3))+ b_1(\partial_3 u(.,x_3),v(.,x_3))} \nonumber\\
&& +\displaystyle{b_2(u(.,x_3), \partial_3 v(.,x_3))+ c(\partial_3 u(.,x_3),\partial_3 v(.,x_3))\} dx_3}
.
\end{eqnarray}
If $u \in W^2_{\beta}(\Omega,\CC^3)$, $v \in W^1_{-\beta}(\Omega,\CC^3)$, we have 
\begin{eqnarray}
\label{eq15d}
b_{\Omega}(u,v)&= & \displaystyle{\int_{-\infty}^{+\infty} \{a(u(.,x_3), v(.,x_3))+ b_1(\partial_3 u(.,x_3),v(.,x_3))} \nonumber\\
&& \displaystyle{-b_2(\partial_3 u(.,x_3), v(.,x_3))- c(\partial_{33} u(.,x_3),v(.,x_3))\} dx_3}.
\end{eqnarray}
On the other hand from \eqref{eq4b} and \eqref{eq13}, 
\begin{equation}
\label{eq15e}
\langle  \mathscr{L}(\nu) \varphi, \psi \rangle = 
 a(\varphi, \overline{\psi}) + \nu (b_1(\varphi, \overline{\psi}) - b_2(\varphi, \overline{\psi}))  - \nu^2 c(\varphi, \overline{\psi}),
\, \varphi, \psi \in H^1(\omega_h, \CC^3), \nu \in \CC.
\end{equation}
In view of \eqref{eq15d} and \eqref{eq15e}, if $u \in W^2_{\beta}(\Omega,\CC^3)$, $v \in W^1_{-\beta}(\Omega,\CC^3)$, we have
\begin{equation}
\label{eq4c2}
<B_{\Omega,\beta}u,v>  = b_{\Omega}(u, \overline{v}) 
=
\int_{-\infty}^{+\infty} 
\langle  \mathscr{L}(\partial_x)u(., x),{v}(.,x)\rangle dx.
\end{equation}
We also have: for all $u \in W^2_{\beta}(\Omega,\CC^3$), $v \in W^1_{-\beta}(\Omega,\CC^3)$,
\begin{equation}
\label{eq15f}
b_{\Omega}(u,{v})= \displaystyle{\int_{\Omega} ({\cal A}(\partial_{x_1}, \partial_{x_3}) u) \cdot {{v}} + \int_{\partial \Omega}({\cal B}(\partial_{x_1}, \partial_{x_3}) u) \cdot {{v}}},
\end{equation}
where 
\begin{equation}
\label{eq15g}
({\cal A}(\partial_{x_1}, \partial_{x_3}) u)_i= - \partial_j\sigma_{ij}(u),\, ({\cal B}(\partial_{x_1}, \partial_{x_3}) u)_i= \sigma_{ij}(u) n_j,\, i=1,\ldots,3
\end{equation}
and where in \eqref{eq15f}, \eqref{eq15g} $\cdot$ is the Hermitian product in $\CC^3$, $n_j$, $j=1,\ldots,3$, are the components of the outward normal to $\partial \Omega$.

In order to obtain formula \eqref{eq15h} below, we will apply the method of \cite{Nazarov-Plamenevsky}, p.52. Let $\zeta$ $\in$ ${\cal C}^{\infty}(\RR,\RR)$ be nonnegative and such that $\int_{\RR} \zeta =1$, $\eta \in {\cal C}^{\infty}_0(\RR,\RR)$ be such that $\eta = 1$ in a neighborhood of $0$.
Let $\nu \in \CC$, $\varphi \in H^2(\omega_h,\CC^3)$, $\psi \in H^1(\omega_h,\CC^3)$ and for $\eps >0$, $x \in \RR$, set $u^{\eps}(.,x)= \varphi e^{\nu x} \zeta(x/\eps)/\eps$, $v^{\eps}(.,x) = \psi e^{-{\nu} x} \eta(x)$. It is well-known that $\int_{-\infty}^{+\infty} (\zeta(x/\eps)/\eps) \eta(x) dx$ $\rightarrow $ $\eta(0) =1$ when $\eps \rightarrow 0$. On the other hand by an integration by parts, for $n \in \NN^*$, $\int_{-\infty}^{+\infty} \partial_x^n(\zeta(x/\eps)/\eps) \eta(x) dx$ =  $(-1)^n \int_{-\infty}^{+\infty} (\zeta(x/\eps)/\eps) \partial_x^n\eta(x) dx$ $\rightarrow $ $(-1)^n(\partial_x^n\eta)(0) =0$ when $\eps \rightarrow 0$. Considering $b_{\Omega}(u^{\eps}, \overline{v^{\eps}})$ given by formulas \eqref{eq4c2} and \eqref{eq15f} and taking the limit when $\eps \rightarrow 0$ we get the formula: for all $\varphi \in H^2(\omega_h, \CC^3)$, $\psi \in H^1(\omega_h, \CC^3)$, $\nu \in \CC$,
\begin{eqnarray}
\label{eq15h}
\langle  \mathscr{L}(\nu)\varphi, \psi\rangle &=&
\displaystyle{\int_{\omega^h} ({\cal A}(\partial_{x_1}, \nu) \varphi) \cdot {\overline{\psi}} + \int_{\partial \omega^h}({\cal B}(\partial_{x_1}, \nu) u) \cdot {\overline{\psi}}}.
\end{eqnarray}
This formula shows that the polynomial operator pencil $\mathscr{L}(\nu)$ restricted to $H^2(\omega_h,\CC^3)$ has the same characteristics (eigenvalues, eigenvectors, Jordan chains ...) than the polynomial operator pencil $({\cal A}(\partial_{x_1}, \nu)$, ${\cal B}(\partial_{x_1}, \nu))$ from $H^2(\omega_h,\CC^3)$ into $L^2(\omega_h,\CC^3)$ $\times$ $\RR^2$.
Moreover if $\nu_0$ is an eigenvalue of the polynomial operator pencil $\mathscr{L}(\nu)$ and $\varphi \in H^1(\omega_h,\CC^3)$ is an eigenvector of $\mathscr{L}(\nu)$ corresponding to $\nu_0$, it can be inferred that $\varphi \in H^2(\omega_h,\CC^3)$ (and even $\varphi \in {\cal C}^{\infty}(\overline{\omega_h},\CC^3)$), with the same properties for the corresponding generalized eigenvectors.

The following proposition is fundamental and gives an asymptotics of the solution to \eqref{eq103e}.
\begin{proposition}
\label{pro2sol}
(a) Let $\beta < \gamma$ $\in \RR$.
Let $\nu_1,\ldots,\nu_N$ be the eigenvalues of the pencil $\mathscr{L}(\nu)$ in the strip $-\gamma < Re \nu <-\beta$.
For $i =1,\ldots,N$, let $J_i$, $\kappa_{i,j}, j=1,\ldots,J_i$, $\{\varphi_{j,s}^i\}_{j=1,\ldots,J_i, s=0,\ldots,\kappa_{i,j} -1}$ be the geometric multiplicity of $\nu_i$, the partial multiplicities of $\nu_i$ and a canonical system of Jordan chains of ${\mathscr{L}}(\nu)$ corresponding to $\nu_i$. 
For $i =1,\ldots, N$, let $\{\psi_{j,s}^i\}_{j=1,\ldots,J_i, s=0,\ldots,\kappa_{i,j} -1}$ be the unique canonical system of Jordan chains of ${\mathscr{L}}^*(\nu)$ = ${\mathscr{L}}(-\nu)$ corresponding to $\overline{\nu_i}$ given by Theorem \ref{thkeldysh}.
For $i=1,\ldots,N$, the set of "power-exponential" functions
\begin{equation}
\label{eq15k1}
u^i_{j,s}(x) = e^{\nu_i x} \sum_{\sigma=0}^s \frac{x^{\sigma}}{\sigma !} \varphi_{j,s - \sigma}^i
\end{equation}
for $j=1,\ldots,J_i, s=0,\ldots,\kappa_{i,j} -1$ form a basis of ${\cal N}({\mathscr{L}}(\partial_x), \nu_i)$.
For $i=1,\ldots,N$, the set of "power-exponential" functions
\begin{equation}
\label{eq15k}
w^i_{j,s}(x) = e^{\overline{\nu_i} x} \sum_{\sigma=0}^s \frac{x^{\sigma}}{\sigma !} \psi_{j,s - \sigma}^i
\end{equation}
for $j=1,\ldots,J_i, s=0,\ldots,\kappa_{i,j} -1$ form a basis of ${\cal N}({\mathscr{L}}^*(\partial_x), \overline{\nu_i})$ = ${\cal N}({\mathscr{L}}(-\partial_x), \overline{\nu_i})$ and the set of "power-exponential" functions
\begin{equation}
\label{eq15m}
v^i_{j,s}(x) = w^i_{j,s}(-x) = e^{-\overline{\nu_i} x} \sum_{\sigma=0}^s \frac{(-x)^{\sigma}}{\sigma !} \psi_{j,s - \sigma}^i
\end{equation}
for $j=1,\ldots,J_i, s=0,\ldots,\kappa_{i,j} -1$ form a basis of ${\cal N}({\mathscr{L}}(\partial_x), -\overline{\nu_i})$.
Set
\begin{eqnarray}
\label{eq1az}
&&\Omega_{\beta} = \left \{ x =(x_1,x_3) \in \RR^2, x_1 \in \omega_h, x_3 < 1 \right \}, \nonumber \\
&&\Omega_{\gamma} = \left \{ x =(x_1,x_3) \in \RR^2, x_1 \in \omega_h, -1 < x_3 \right \}
\end{eqnarray}
and let $\varphi_{\beta}$, $\varphi_{\gamma}$ $\in {\cal C}^{\infty}(\overline{\Omega},\RR)$ satisfy the conditions: supp $\varphi_{\beta} \subset \Omega_{\beta}$, supp $\varphi_{\gamma} \subset \Omega_{\gamma}$ and $\varphi_{\beta} + \varphi_{\gamma} \equiv 1$.
Then for  $i=1,\ldots,N$, $j=1,\ldots,J_i, s=0,\ldots,\kappa_{i,j} -1$, $\varphi_{\beta}{v_{j,s}^i} \in {W}^1_{-\beta}(\Omega,\CC^3)$ and $\varphi_{\gamma}{v_{j,s}^i} \in {W}^1_{-\gamma}(\Omega,\CC^3)$.

(b)  Assume moreover that $\mathscr{L}(\nu)$ has no eigenvalue on the lines $Re \nu = - \beta$ and $Re \nu = - \gamma$.
Let $f_{\beta} \in ({W}^1_{-\beta}(\Omega,\CC^3))^*$ and $f_{\gamma} \in ({W}^1_{-\gamma}(\Omega,\CC^3))^*$ be such that $f_{\beta}$ = $f_{\gamma}$ on ${W}^1_{-\beta}(\Omega,\CC^3) \cap {W}^1_{-\gamma}(\Omega,\CC^3)$ and set $u_{\beta} = B_{\Omega,\beta}^{-1} f_{\beta}$, $u_{\gamma} = B_{\Omega,\gamma}^{-1} f_{\gamma}$ (Theorem \ref{thisom}). Then 
\begin{equation}
\label{eq15l}
u_{\beta} =  u_{\gamma} + \sum_{i=1}^N \sum _{j=1}^{J_i} \sum _{s=0}^{\kappa_{i,j} -1} c_{j,s}^i u_{j,s}^i
\end{equation}
a.e. in $\Omega$, 
where 
$\{c_{j,s}^i\}_{i=1,\ldots, N,j=1,\ldots,J_i, s=0,\ldots,\kappa_{i,j} -1}$ are constants (independent of $\varphi_{\beta}$) given by:
\begin{eqnarray}
\label{eq15l1}
c_{j,s}^i &=& \langle f_{\beta}, \varphi_{\beta}\overline{v_{j,\kappa_{i,j}-1 -s}^i}\rangle + \langle f_{\gamma}, \varphi_{\gamma}\overline{v_{j,\kappa_{i,j}-1 -s}^i}\rangle \nonumber \\
&=&
\langle f_{\beta}, \varphi_{\beta}{v_{j,\kappa_{i,j}-1 -s}^i}\rangle_2 +
\langle f_{\gamma}, \varphi_{\gamma}{v_{j,\kappa_{i,j}-1 -s}^i}\rangle_{2}.
\end{eqnarray}
\end{proposition}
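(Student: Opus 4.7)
The plan is to handle part (a) by the standard machinery of operator pencils from appendix \ref{oppenc}, and part (b) by writing $u_\beta$ and $u_\gamma$ via the inverse Fourier--Laplace representation of Theorem \ref{thisom} and performing a contour shift from $l_{-\beta}$ to $l_{-\gamma}$, collecting residues at the eigenvalues $\nu_i$.

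For part (a), I would observe that a vector-valued polynomial--exponential of the form $e^{\nu_i x}\sum_{\sigma=0}^{s}\frac{x^\sigma}{\sigma!}\varphi^i_{j,s-\sigma}$ solves $\mathscr{L}(\partial_x)u=0$ iff $(\varphi^i_{j,0},\ldots,\varphi^i_{j,s})$ is a Jordan chain of $\mathscr{L}(\nu)$ at $\nu_i$; since every element of $\mathcal{N}(\mathscr{L}(\partial_x),\nu_i)$ arises this way from a canonical system of Jordan chains, the family $\{u^i_{j,s}\}$ is a basis. The same argument applied to $\mathscr{L}^*(\nu)=\mathscr{L}(-\nu)$ (see \eqref{eq104e1}) at $\overline{\nu_i}$ gives the basis $\{w^i_{j,s}\}$, and since $\mathscr{L}(\partial_x)u(x)=(\mathscr{L}(-\partial_x)w)(-x)$ when $u(x)=w(-x)$, the substitution $x\mapsto -x$ converts this into the basis $\{v^i_{j,s}\}$ of $\mathcal{N}(\mathscr{L}(\partial_x),-\overline{\nu_i})$. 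For the weighted space claim, $|v^i_{j,s}(x_3)|$ is dominated by $|x_3|^{\kappa_{i,j}-1}e^{-\mathrm{Re}(\nu_i)x_3}$; the cut-off $\varphi_\beta$ being supported in $\{x_3<1\}$ and the assumption $-\beta>\mathrm{Re}(\nu_i)$ yield $L^2$-integrability of $e^{-\beta x_3}\varphi_\beta v^i_{j,s}$ at $-\infty$, and symmetrically for $\varphi_\gamma v^i_{j,s}$ at $+\infty$ using $-\gamma<\mathrm{Re}(\nu_i)$.

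For part (b), Theorem \ref{thisom} together with \eqref{eq104a} gives
\begin{equation*}
u_\beta(.,x)=\frac{1}{2\pi i}\int_{l_{-\beta}}e^{\nu x}(\mathscr{L}(\nu))^{-1}\hat f_\beta(.,\nu)\,d\nu,
\end{equation*}
and similarly for $u_\gamma$ on $l_{-\gamma}$. The compatibility of $f_\beta$ and $f_\gamma$ on the intersection of the two weighted spaces implies that $\hat f_\beta$ and $\hat f_\gamma$ are the restrictions to their respective lines of a common function, meromorphic in the strip $-\gamma<\mathrm{Re}\,\nu<-\beta$ with poles only at the $\nu_i$ (Lemma \ref{lemma_spectrum}). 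Combining the operator estimate \eqref{eq103h} of Lemma \ref{isom} (valid outside a bounded set of $\nu$) with the integrability of $\hat f$ along vertical lines, I would justify shifting the contour from $l_{-\beta}$ to $l_{-\gamma}$, which gives
\begin{equation*}
u_\beta-u_\gamma=\sum_{i=1}^{N}\mathrm{Res}_{\nu=\nu_i}\bigl[e^{\nu x}(\mathscr{L}(\nu))^{-1}\hat f(.,\nu)\bigr].
\end{equation*}
Expanding $(\mathscr{L}(\nu))^{-1}$ in its Keldysh form near each $\nu_i$ with the biorthogonal system $\{\varphi^i_{j,s},\psi^i_{j,s}\}$ produced by Theorem \ref{thkeldysh}, the residue is exactly a linear combination of the $u^i_{j,s}$, which proves \eqref{eq15l} and yields coefficients $c^i_{j,s}$ expressed as pairings of $\hat f(.,\nu_i)$ (and its derivatives) with $\psi^i_{j,\kappa_{i,j}-1-s}$.

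The last task is to rewrite these residual pairings in the integral form \eqref{eq15l1}. The key identity is \eqref{eq106}: the right-hand side of \eqref{eq15l1} equals the integral over $l_{\beta}$ of $\hat f$ paired with the Fourier--Laplace transform of $\varphi_\beta\overline{v^i_{j,\kappa_{i,j}-1-s}}+\varphi_\gamma\overline{v^i_{j,\kappa_{i,j}-1-s}}$. Because $v^i_{j,r}$ is a power-exponential with exponent $-\overline{\nu_i}$ coming from the Jordan chain of $\mathscr{L}^*$ at $\overline{\nu_i}$, closing the contour and invoking the biorthogonality between the two canonical systems isolates the residue at $\nu_i$ and picks out exactly the coefficient $c^i_{j,s}$ identified in the previous step, independently of the cut-off choice. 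I expect the \textbf{main obstacle} to be the rigorous justification of the contour shift and of the residue computation for the vector-valued integrand: the bound \eqref{eq103j} provides only an $L^2$-type control along vertical lines, so passing to pointwise residues requires either working first on a dense subclass of data (e.g.\ compactly supported smooth $f$, for which $\hat f$ extends entire with rapid decay) and then passing to the limit using \eqref{eq103j}, or carrying out the residue calculus directly in the sense of distributions in $\nu$.
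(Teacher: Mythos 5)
Your overall strategy for part (b) --- contour shift from $l_{-\beta}$ to $l_{-\gamma}$, Keldysh residues via Theorem \ref{thkeldysh}, and a density argument to handle general data --- is the same as the paper's, and part (a) is fine. But the central step is left with a genuine gap. The assertion that the compatibility $f_\beta = f_\gamma$ on ${W}^1_{-\beta}(\Omega,\CC^3)\cap {W}^1_{-\gamma}(\Omega,\CC^3)$ forces $\hat f_\beta$ and $\hat f_\gamma$ to be restrictions of a common function meromorphic in the strip is unjustified and not true in general: for $f_\beta \in ({W}^1_{-\beta}(\Omega,\CC^3))^*$ the transform $\hat f_\beta$ is only an $L^2$-type object on the single line $l_{-\beta}$, with no analytic continuation, and the poles at the $\nu_i$ belong to $(\mathscr{L}(\nu))^{-1}$, not to $\hat f$. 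You do flag this and propose the correct repair (work first with $f = (\cdot,h)_{0,\Omega}$, $h \in {\cal C}^\infty_0(\overline{\Omega},\CC^3)$, for which $\hat f$ is entire with rapid decay, then pass to the limit using \eqref{eq103j}). What you have not noticed is that this density step is itself where the real work lies: you must approximate the \emph{pair} $(f_\beta, f_\gamma)$ \emph{simultaneously}, i.e.\ the same $h$ must make $(\cdot,h)_{0,\Omega}$ close to $f_\beta$ in $({W}^1_{-\beta})^*$ and close to $f_\gamma$ in $({W}^1_{-\gamma})^*$. A smooth $h_\beta$ approximating $f_\beta$ in the first norm is in general useless in the second. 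The paper resolves this by taking $h = \varphi_\beta h_\beta + \varphi_\gamma h_\gamma$ and exploiting the compatibility hypothesis (estimates \eqref{eq15m1}--\eqref{eq15n4}); this is precisely where the cutoffs $\varphi_\beta,\varphi_\gamma$ and the hypothesis $f_\beta = f_\gamma$ on the intersection enter, and without it the limit passage does not go through.

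Two smaller points. First, the identification of the residue coefficients with \eqref{eq15l1} does not need your ``close the contour and invoke biorthogonality'' detour: for $f=(\cdot,h)_{0,\Omega}$ one computes directly that $\langle \partial_\nu^{q'}\hat f(\cdot,\nu_i), \overline{\psi^i_{j,p}}\rangle = \int_\Omega \overline{h}\, e^{-\nu_i x_3}(-x_3)^{q'}\overline{\psi^i_{j,p}}\,dx_1\,dx_3$ (Eq.\ \eqref{eq15n10}), which is already the pairing of $f$ with $\varphi_\beta \overline{v^i_{j,\cdot}}+\varphi_\gamma \overline{v^i_{j,\cdot}}$; the general case follows by continuity of \eqref{eq15l1} in $(f_\beta,f_\gamma)$. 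Second, the statement also asserts that the $c^i_{j,s}$ are independent of the choice of $\varphi_\beta$; this requires a short separate argument (the difference of two admissible cutoff pairs is compactly supported and sums to zero, so the compatibility hypothesis makes the two expressions \eqref{eq15l1} coincide), which your proposal asserts but does not prove.
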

\begin{remark}
\label{rem_error}
There is a proof of a similar result in \cite{Kozlov-Mazya-Rossmann}, Theorem 5.4.1, which uses Lemma 5.4.1 of the same reference. But the author of the present paper does not understand the argument p.170 in the proof of this lemma:
"Consequently, the norm of $v_{\rho}$ in $L_2(\Omega\times (-N,N))$ is square integrable with respect to $\rho$ over the interval $(c_1, + \infty)$. Thus this norm tends to zero as $\rho \rightarrow \infty$."
It is well-known that if a function $f: \RR \rightarrow \RR$ is integrable on $\RR$, $f(x)$ does not necessarily tend to zero when $x \rightarrow \pm \infty$. Perhaps it lacks an argument in this proof.
There is another proof of a similar result in \cite{Nazarov-Plamenevsky}, pp.47-48 which uses an argument of density. We apply the same argument of density in the proof of Proposition \ref{pro2sol} where the situation is distinct from that of \cite{Nazarov-Plamenevsky}, pp.47-48 because of the second members which are in a dual space. Moreover the calculus of the coefficients $c_{j,s}^i$ is simpler in the present proof than in the proofs of \cite{Kozlov-Mazya-Rossmann},p.176, \cite{Nazarov-Plamenevsky},pp.50-51 or \cite{Mazya-Plamenevskii}.
\end{remark}
%

\begin{proof}
(a) The basis properties are stated at the end of appendix \ref{oppenc}.
Since for $i=1, \ldots, N$, $\beta < Re(- \overline{\nu_i})< \gamma$, it results that for $i=1,\ldots,N$, $j=1,\ldots,J_i$, $s=0,\ldots,\kappa_{i,j} -1$, $\varphi_{\beta}{v_{j,s}^i} \in {W}^1_{-\beta}(\Omega,\CC^3)$ and $\varphi_{\gamma}{v_{j,s}^i} \in {W}^1_{-\gamma}(\Omega,\CC^3)$.

(b) Given that the embedding from $W^{1}_{-\beta}(\Omega,\CC^3)$ into $W^{0}_{-\beta}(\Omega,\CC^3)$ is continuous and dense, the embedding from $(W^{0}_{-\beta}(\Omega,\CC^3))^*$ into $(W^{1}_{-\beta}(\Omega,\CC^3))^*$ is also continuous and dense (see \cite{Zeidler}, p.265) and thanks to the Riesz representation Theorem and the density of ${\cal C}^{\infty}_0(\overline{{\Omega}},\CC^3)$ in $(W^{0}_{-\beta}(\Omega,\CC^3))^*$ $\cong$ $W^{0}_{\beta}(\Omega,\CC^3)$ it follows that if $f_{\beta} \in ({W}^1_{-\beta}(\Omega,\CC^3))^*$ and $\eps >0$, there exists $h_{\beta} \in {\cal C}^{\infty}_0({\overline{\Omega}},\CC^3)$ satisfying
\begin{equation}
\label{eq15m1}
||f_{\beta} - (.,h_{\beta})_{0,\Omega}||_{-\beta,1,\Omega,*} \leq \eps.
\end{equation}
Likewise if $f_{\gamma} \in ({W}^1_{-\gamma}(\Omega,\CC^3))^*$ and $\eps >0$, there exists $h_{\gamma} \in {\cal C}^{\infty}_0({\overline{\Omega}},\CC^3)$ satisfying 
\begin{equation}
\label{eq15m2}
||f_{\gamma} - (.,h_{\gamma})_{0,\Omega}||_{-\gamma,1,\Omega,*} \leq \eps.
\end{equation}
If $u \in W^{1}_{-\beta}(\Omega,\CC^3)$, it results that $\varphi_{\beta} u \in W^{1}_{-\beta}(\Omega,\CC^3)$ and $\varphi_{\gamma} u \in W^{1}_{-\beta}(\Omega,\CC^3) \cap W^{1}_{-\gamma}(\Omega,\CC^3)$ (because $\beta < \gamma$) and therefore \eqref{eq15m1} implies:
\begin{equation}
\label{eq15n1}
|\langle f_{\beta}, \varphi_{\beta} u \rangle - (u,\varphi_{\beta}h_{\beta})_{0,\Omega}| \leq \eps ||\varphi_{\beta} u||_{-\beta,1,\Omega}
\leq \eps C||u||_{-\beta,1,\Omega}
\end{equation}
and 
\eqref{eq15m2} implies:
\begin{equation}
\label{eq15n2}
|\langle f_{\gamma}, \varphi_{\gamma} u \rangle - (u,\varphi_{\gamma}h_{\gamma})_{0,\Omega}| \leq \eps ||\varphi_{\gamma} u||_{-\gamma,1,\Omega}
\leq \eps C_1||\varphi_{\gamma}u||_{-\beta,1,\Omega}
\leq \eps C_2||u||_{-\beta,1,\Omega},
\end{equation}
where $C>0$, $C_1>0$, $C_2>0$ are constants independent of $u$ and $\eps$. 
Remark that because 
$f_{\beta}$ = $f_{\gamma}$ on ${W}^1_{-\beta}(\Omega,\CC^3) \cap {W}^1_{-\gamma}(\Omega,\CC^3)$,
\begin{equation}
\label{eq15n1a}
\langle f_{\beta}, \varphi_{\beta} u \rangle +
\langle f_{\gamma}, \varphi_{\gamma} u \rangle=\langle f_{\beta}, u \rangle, \, u \in {W}^1_{-\beta}(\Omega,\CC^3).
\end{equation}
Setting $h=\varphi_{\beta}h_{\beta}+\varphi_{\gamma}h_{\gamma} \in {\cal C}^{\infty}_0(\overline{\Omega}, \CC^3)$, adding \eqref{eq15n1} and \eqref{eq15n2} and taking into account \eqref{eq15n1a}, there exists $C_{\beta}>0$ independent of $u$ and $\eps$ satisfying
\begin{equation}
\label{eq15n3}
|\langle f_{\beta},u \rangle - (u,h)_{0,\Omega}| \leq \eps C_{\beta} ||u||_{-\beta,1,\Omega}, \,u \in W^{1}_{-\beta}(\Omega,\CC^3).
\end{equation}
Similarly there exists $C_{\gamma}>0$ independent of $u$ and $\eps$ satisfying

\begin{equation}
\label{eq15n4}
|\langle f_{\gamma},u \rangle - (u,h)_{0,\Omega}| \leq \eps C_{\gamma} ||u||_{-\gamma,1,\Omega},\, u \in W^{1}_{-\gamma}(\Omega,\CC^3).
\end{equation}
%
%
Now let $h \in {\cal C}^{\infty}_0(\overline{\Omega}, \CC^3)$ and set $f=(.,h)_{0,\Omega}$.
For all $\theta \in \RR$,  $f$ defines a continuous map on $W^1_{\theta}(\Omega,\CC^3)$, in particular on $W^1_{-\beta}(\Omega,\CC^3)$ and on $W^1_{-\gamma}(\Omega,\CC^3)$. The partial Fourier-Laplace transform of $f$ in the sense of \eqref{eq106} is holomorphic with respect to $\nu \in \CC$ and satisfies:
\begin{equation}
\label{eq15n5}
\langle \hat{f}(., \nu), \varphi\rangle = \int_{\omega_h} \hat{\overline{h}}(x_1, \nu) \varphi(x_1) d x_1, \, \nu \in \CC,\, \varphi \in H^1(\omega_h,\CC^3).
\end{equation}
As $h \in {\cal C}^{\infty}_0(\overline{\Omega}, \CC^3)$, by integrations by parts, if $n>0$, there exists $C_n >0$ such that if $\nu \in \CC^*$,
$||\hat{\overline{h}}(., \nu)||_{\infty,\omega_h} \leq C_n/{|\nu| ^n}$, consequently there exists $C'_n >0$ such that if $\nu \in \CC^*$,
$||\hat{f}(.,\nu)||_{1,\omega_h,|\nu|,*}\leq C'_n/{|\nu| ^{n+1}}$.
Now by using formula \eqref{eq104a} for $\beta$ and $\gamma$, estimate \eqref{eq103h} and a change of path of integration as in 
\cite{Nazarov-Plamenevsky}, p.47, we get
\begin{eqnarray}
\label{eq15n6}
B_{\Omega,\beta}^{-1}(f\vert_{W^1_{-\beta}(\Omega,\CC^3)})(., x) &=& 
B_{\Omega,\gamma}^{-1}(f\vert_{W^1_{-\gamma}(\Omega,\CC^3)})(., x)\nonumber \\
&+& \sum_{i=1}^n Res_{\nu = \nu_i} 
\{e^{\nu x}(\mathscr{L}(\nu))^{-1}\hat{f}(., \nu)\}.
\end{eqnarray}
In view of \eqref{eq210} we get
\begin{equation}
\label{eq15n7}
Res_{\nu = \nu_i} 
\{e^{\nu x}(\mathscr{L}(\nu))^{-1}\hat{f}(., \nu)\} =
Res_{\nu = \nu_i}
\{e^{\nu x}
\sum _{j=1}^{J_i} \sum _{s=0}^{\kappa_{i,j} -1} \sum_{\sigma=0}^{s} \frac{\langle \hat{f}(., \nu), \overline{\psi_{j,\sigma}^i}\rangle}{(\nu - \nu_i)^{\kappa_{i,j}-s}} \varphi_{j,s-\sigma}^i\}.
\end{equation}
This last term is equal to
\begin{equation}
\label{eq15n8}
e^{\nu_i x}
\sum _{j=1}^{J_i} \sum _{s=0}^{\kappa_{i,j} -1} \sum_{\substack{p\in \NN, q \in \NN,\\ p+q =s}}
\sum_{\substack{p'\in \NN, q' \in \NN,\\p'+q' = \kappa_{i,j}-s-1}}
 \frac{x^{p'}}{(p')!}\frac{\langle \partial_{\nu}^{q'}\hat{f}(., \nu_i), \overline{\psi_{j,p}^i}\rangle}{(q')!} \varphi_{j,q}^i
\end{equation}
written under the form
\begin{equation}
\label{eq15n9}
\sum _{j=1}^{J_i} 
\sum_{k=0}^{\kappa_{i,j} -1}
e^{\nu_i x}
\sum_{\substack{p'\in \NN, q \in \NN,\\ p'+q =k}}
\frac{x^{p'}}{(p')!}
\varphi_{j,q}^i
\sum_{\substack{p\in \NN, q' \in \NN,\\p+q' = \kappa_{i,j}-k-1}}
\frac{\langle \partial_{\nu}^{q'}\hat{f}(., \nu_i), \overline{\psi_{j,p}^i}\rangle}{(q')!}. 
\end{equation}
We can write
\begin{eqnarray}
\label{eq15n10}
{\langle \partial_{\nu}^{q'}\hat{f}(., \nu_i), \overline{\psi_{j,p}^i}\rangle}&=&
\int_{\omega_h} \partial_{\nu}^{q'}\hat{\overline{h}}(x_1, \nu_i) \overline{\psi_{j,p}^i}(x_1) d x_1 \nonumber \\
&=&
\int_{\Omega} \overline{h}(x_1, x_3 ) e^{- \nu_i x_3}(-x_3)^{q'}\overline{\psi_{j,p}^i}(x_1) d x_1 dx_3.
\end{eqnarray}
This shows that the last term in \eqref{eq15n6} is exactly the last term in \eqref{eq15l} (with the notation \eqref{eq15l1}) when $f_{\beta}$=
$f\vert_{W^1_{-\beta}(\Omega,\CC^3)}$,
$f_{\gamma}$=
$f\vert_{W^1_{-\gamma}(\Omega,\CC^3)}$, where $f=(.,h)_{\Omega}$ and $h \in {\cal C}^{\infty}_0(\overline{\Omega},\CC^3)$  (of course in that case $f_{\beta} =f_{\gamma}$ on $W^1_{-\beta}(\Omega,\CC^3) \cap W^1_{-\gamma}(\Omega,\CC^3)$).


Now let $\{h^n_{\beta}\}_{n \in \NN}$ (resp. $\{h^n_{\gamma}\}_{n \in \NN}$) be a sequence in ${\cal C}^{\infty}_0(\overline{\Omega},\CC^3)$ such that $f^n_{\beta} = (., h^n_{\beta})_{0,\Omega} \rightarrow f_{\beta}$ in $({W}^1_{-\beta}(\Omega,\CC^3))^*$ (resp. $f^n_{\gamma} = (., h^n_{\gamma})_{0,\Omega} \rightarrow f_{\gamma}$ in $({W}^1_{-\gamma}(\Omega,\CC^3))^*$) when $n \rightarrow +\infty$ (see \eqref{eq15m1} and \eqref{eq15m2}),
 and for $n \in \NN$ set $h^n= \varphi_{\beta} h_{\beta}^n + \varphi_{\gamma} h_{\gamma}^n$ and $f^n=(.,h^n)_{0,\Omega}$.
Owing to \eqref{eq15n3} and \eqref{eq15n4}, $f^n\vert_{{W}^1_{-\beta}(\Omega,\CC^3)}$  $\rightarrow f_{\beta}$ in $({W}^1_{-\beta}(\Omega,\CC^3))^*$ and  $f^n\vert_{{W}^1_{-\gamma}(\Omega,\CC^3)}$  $\rightarrow f_{\gamma}$ in $({W}^1_{-\gamma}(\Omega,\CC^3))^*$ when $n \rightarrow +\infty$.
From \eqref{eq15n6} applied to $f=f^n$, Theorem \ref{thisom}, the continuity of $c_{j,s}^{i}$ given by \eqref{eq15l1} with respect to $f_{\beta}$ and $f_{\gamma}$, by extracting a subsequence of $\{h_{\beta}^n\}_{n \in \NN}$ and $\{h_{\gamma}^n\}_{n \in \NN}$, we obtain \eqref{eq15l} (a.e in $\Omega$).
Let $\varphi'_{\beta}$, $\varphi'_{\gamma}$ $\in {\cal C}^{\infty}(\overline{\Omega},\RR)$ fulfilling the conditions: supp $\varphi'_{\beta} \subset \Omega_{\beta}$, supp $\varphi'_{\gamma} \subset \Omega_{\gamma}$ and $\varphi'_{\beta} + \varphi'_{\gamma} \equiv 1$. Let $c'^i_{j,s}$ be defined by \eqref{eq15l1} where $\varphi_{\beta}$ and $\varphi_{\gamma}$ are replaced by $\varphi'_{\beta}$ and $\varphi'_{\gamma}$. Since $\varphi_{\beta} - \varphi'_{\beta}$ and $\varphi_{\gamma}-\varphi'_{\gamma}$ have compact supports, since $f_{\beta}$ = $f_{\gamma}$ on ${W}^1_{-\beta}(\Omega,\CC^3) \cap {W}^1_{-\gamma}(\Omega,\CC^3)$ and since $\varphi_{\beta} - \varphi'_{\beta} + \varphi_{\gamma} -\varphi'_{\gamma}\equiv 0$, it results that $c'^i_{j,s}$ = $c^i_{j,s}$.
\end{proof}

\section{Fredholm property in the half-strip $\Omega_+$.}
\setcounter{equation}{0}
\label{Fredholm}
In this section we shall examine the Fredholm properties of the operator associated to the variational formulation in the half-strip.
For $\beta \in \RR$, $k \in \NN$ and $m \in \NN^*$,  define
$W^k_{\beta}(\Omega_+,\CC^m)$ as the completion of ${\cal C} ^{\infty}_0(\overline{\Omega_+},\CC^m)$ for the norm $\lVert u \rVert_{\beta,k,\Omega_+}$ = $\lVert e^{\beta x_3} u \rVert_{k,\Omega_+}$, $H^k_{\Sigma}(\Omega_+,\CC^m)$ = $\{ u \in H^k(\Omega_+,\CC^m), u = 0 \mbox{ on } \Sigma\}$
and
$W^k_{\beta,\Sigma}(\Omega_+,\CC^m)$ = $\{ u \in W^k_{\beta}(\Omega_+,\CC^m), u = 0 \mbox{ on } \Sigma\}$.
The norm on the topological dual  
$(W^k_{\beta,\Sigma}(\Omega_+, \CC^m))^*$ 
of $W^k_{\beta,\Sigma}(\Omega_+, \CC^m)$ will be denoted by 
$||.||_{\beta,\Sigma,k,\Omega_+,*}$.
The sesquilinear form $b_{\Omega_+}$ defined in \eqref{eq2a1} makes sense for  
$u \in W^1_{\beta}(\Omega_+,\CC^3)$ and 
$v \in W^1_{-\beta}(\Omega_+,\CC^3)$ then also for
$u \in W^1_{\beta,\Sigma}(\Omega_+,\CC^3)$ and 
$v \in W^1_{-\beta,\Sigma}(\Omega_+,\CC^3)$ and there exists $C>0$ such that for all $u \in W^1_{\beta}(\Omega_+,\CC^3)$ (resp. $u \in W^1_{\beta,\Sigma}(\Omega_+,\CC^3)$) and 
$v \in W^1_{-\beta}(\Omega_+,\CC^3)$ (resp. $v \in W^1_{-\beta,\Sigma}(\Omega_+,\CC^3)$),

\begin{equation}
\label{eq7a}
|b_{\Omega_+}(u,v)| \leq C||u||_{\beta,1,\Omega_+} ||v||_{-\beta,1,\Omega_+}.
\end{equation}
Define the linear continuous map $B_{\Omega_+,\beta}$: $W^1_{\beta,\Sigma}(\Omega_+,\CC^3)$ $\rightarrow$ $({W}^1_{-\beta,\Sigma}(\Omega_+,\CC^3))^*$ by
\begin{equation}
\label{eq8a}
<B_{\Omega_+,\beta}u,v> = b_{\Omega_+}(u,\overline{v}),\,u \in W^1_{\beta,\Sigma}(\Omega_+,\CC^3),\,v \in W^1_{-\beta,\Sigma}(\Omega_+,\CC^3),
\end{equation}
consequently
\begin{equation}
\label{eq9a}
||B_{\Omega_+,\beta}u||_{-\beta,\Sigma,1,\Omega_+,*} \leq C ||u||_{\beta,1,\Omega_+}, \, u \in W^1_{\beta,\Sigma}(\Omega_+,\CC^3).
\end{equation}
Remark that if $\beta \in \RR$, $(B_{\Omega_+,\beta})^*= B_{\Omega_+,-\beta}$.
Define the sesquilinear Hermitian form $e_{\Omega_+}$ on $H^1(\Omega_+,\CC^3)$ by
\begin{eqnarray}
\label{eq1}
e_{\Omega_+}(u,v)& = &\displaystyle{\int_{\Omega_+} a_{ijkl} \eps_{kl}(u) \eps_{ij}(\overline{v}) + \int_{\Omega_+} u_i\overline{v}_i}\nonumber\\
& =& \displaystyle{\int_{\Omega_+} \lambda \dive u \dive \overline{v} + 2 \mu \eps_{ij}(u) \eps_{ij}(\overline{v}) + \int_{\Omega_+} u_i\overline{v}_i},\,u,v \in H^1(\Omega_+,\CC^3).
\end{eqnarray}
The next two simple lemmas are used in the proof of Proposition \ref{prop1}.
\begin{lemma}
\label{lemma_gd}
Let $\beta \in \RR$, let $\chi \in {\cal C}^{\infty}(\RR,\RR)$ be such that $||\chi||_{\infty}$ $< + \infty$ and $||\chi'||_{\infty}$ $< + \infty$. If $u \in W^{1}_{\beta}(\Omega,\CC^3)$ and $v \in W^{1}_{-\beta}(\Omega,\CC^3)$, then $\chi(x_3) u \in W^{1}_{\beta}(\Omega,\CC^3)$, $\chi(x_3) v \in W^{1}_{-\beta}(\Omega,\CC^3)$ and
\begin{equation}
\label{eq24a}
b_{\Omega}(\chi(x_3) u,v) - b_{\Omega}(u,\chi(x_3) v)= c_{\Omega}(\chi')(u,v)
\end{equation}
with 
\begin{equation}
\label{eq24c}
c_{\Omega}(\chi')(u,v) = \int_{\Omega} D(\chi')(u,v),
\end{equation}
where
\begin{eqnarray}
\label{eq24b}
D(\chi')(u,v)&=& \chi'(x_3) [\lambda (u_3 \dive \overline{v} -  \overline{v}_3 \dive{u})
+2 \mu (u_1 \eps_{13}(\overline{v}) - \overline{v}_1 \eps_{13}(u))\nonumber\\
&+ &2 \mu(u_3 \eps_{33}(\overline{v}) - \overline{v}_3 \eps_{33}(u))+
\mu(u_2 \partial_3 \overline{v}_2- \overline{v}_2 \partial_3 u_2)
].
\end{eqnarray}
Consequently there exists $C>0$ such that if $u \in W^{1}_{\beta}(\Omega,\CC^3)$ and $v \in W^{1}_{-\beta}(\Omega,\CC^3)$,
\begin{equation}
\label{eq24}
|b_{\Omega}(\chi(x_3) u,v) - b_{\Omega}(u,\chi(x_3) v)| \leq 
C ||\chi'||_{\infty} ||u||_{\beta,1, \Omega} ||v||_{-\beta,1, \Omega}.
\end{equation}
Moreover 
\begin{equation}
\label{eq24g}
||\chi(x_3) u||_{\beta,1,\Omega}^2 \leq ||\chi||_{\infty}^2 ||u||_{\beta,1,\Omega}^2 +
||\chi'||_{\infty}^2 ||u||_{\beta,0,\Omega}^2,\,
u \in W^{1}_{\beta}(\Omega,\CC^3)
.
\end{equation}
Likewise let $\chi \in {\cal C}^{\infty}(\RR_+,\RR)$  (where $\RR_+$ is the set of nonnegative real numbers) be such that $||\chi||_{\infty}$ $< + \infty$ and $||\chi'||_{\infty}$ $< + \infty$. If $u \in W^{1}_{\beta}(\Omega_+,\CC^3)$ and $v \in W^{1}_{-\beta}(\Omega_+,\CC^3)$, then $\chi(x_3) u \in W^{1}_{\beta}(\Omega_+,\CC^3)$, $\chi(x_3) v \in W^{1}_{-\beta}(\Omega_+,\CC^3)$ and
\begin{equation}
\label{eq24d}
b_{\Omega_+}(\chi(x_3) u,v) - b_{\Omega_+}(u,\chi(x_3) v)= c_{\Omega_+}(\chi')(u,v)
\end{equation}
with 
\begin{equation}
\label{eq24e}
c_{\Omega_+}(\chi')(u,v) = \int_{\Omega_+} D(\chi')(u,v).
\end{equation}
Consequently there exists $C>0$ such that if $u \in W^{1}_{\beta}(\Omega_+,\CC^3)$ and $v \in W^{1}_{-\beta}(\Omega_+,\CC^3)$,
\begin{equation}
\label{eq24f}
|b_{\Omega_+}(\chi(x_3) u,v) - b_{\Omega_+}(u,\chi(x_3) v)| \leq 
C ||\chi'||_{\infty} ||u||_{\beta,1, \Omega_+} ||v||_{-\beta,1, \Omega_+}
.
\end{equation}
Moreover
\begin{equation}
\label{eq24h}
||\chi(x_3) u||_{\beta,1,\Omega_+}^2 \leq ||\chi||_{\infty}^2 ||u||_{\beta,1,\Omega_+}^2 +
||\chi'||_{\infty}^2 ||u||_{\beta,0,\Omega_+}^2,\,u \in W^{1}_{\beta}(\Omega_+,\CC^3).
\end{equation}
\end{lemma}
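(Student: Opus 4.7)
The lemma is essentially a computation with the product rule; there is no analytical subtlety beyond bookkeeping. The plan is to first establish \eqref{eq24g} by direct manipulation of the definition of the weighted norm, then derive the commutator identity \eqref{eq24a} for smooth compactly supported $u,v$ by pointwise calculation, and finally pass to general $u \in W^1_{\beta}(\Omega,\CC^3)$ and $v \in W^1_{-\beta}(\Omega,\CC^3)$ by density.

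First I would verify that $\chi(x_3) u \in W^1_{\beta}(\Omega,\CC^3)$. Since $\chi$ depends only on $x_3$ and is real, one has $e^{\beta x_3} \chi u = \chi\, (e^{\beta x_3} u)$, so computing $\|e^{\beta x_3} \chi u\|_{1,\Omega}^2$ splits into the $L^2$-norm of $\chi w$, the $L^2$-norm of $\chi\,\partial_1 w$, and the $L^2$-norm of $\chi\,\partial_3 w + \chi' w$, where $w = e^{\beta x_3} u$. Bounding each summand by $\|\chi\|_\infty$ or $\|\chi'\|_\infty$ times the appropriate component of $w$ immediately yields \eqref{eq24g} (and similarly \eqref{eq24h}), so $\chi u$ belongs to the claimed space whenever $u$ does; approximation by smooth compactly supported functions shows the membership.

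Next I would prove \eqref{eq24a}. For $u,v \in {\cal C}^{\infty}_0(\overline{\Omega},\CC^3)$ the integrals in \eqref{eq2a} are absolutely convergent, so one can work pointwise and use the product rule. The mass term $\omega^2\!\int \rho u_i \overline{v_i}$ is symmetric in the placement of the real scalar $\chi$, hence contributes nothing to the commutator. For the elasticity term, write $\partial_j(\chi u_i) = \chi \partial_j u_i + \delta_{j3}\chi' u_i$, so
\begin{equation*}
\eps_{ij}(\chi u) = \chi\,\eps_{ij}(u) + \tfrac{1}{2}\chi'(\delta_{j3} u_i + \delta_{i3} u_j),\quad
\dive(\chi u) = \chi\,\dive u + \chi' u_3,
\end{equation*}
with analogous formulas for $\chi \overline{v}$. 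Substituting into $\sigma_{ij}(\chi u)\eps_{ij}(\overline{v}) - \sigma_{ij}(u)\eps_{ij}(\chi \overline{v})$, the $\chi$-terms cancel by symmetry of $\eps$ and the remaining $\chi'$-terms (using $\partial_2 = 0$ so that $\eps_{23}(u) = \tfrac12\partial_3 u_2$ and $\eps_{12}(u)$ contains no $\chi'$ contribution) collapse to exactly the expression for $D(\chi')(u,v)$ of \eqref{eq24b}. This gives \eqref{eq24a}, \eqref{eq24c} for smooth compactly supported $u,v$.

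Then I would extend to $u \in W^1_\beta(\Omega,\CC^3)$ and $v \in W^1_{-\beta}(\Omega,\CC^3)$ by density of ${\cal C}^{\infty}_0(\overline{\Omega},\CC^3)$: both sides of \eqref{eq24a} are continuous sesquilinear forms on $W^1_\beta \times W^1_{-\beta}$ (the left side by \eqref{eq3} combined with \eqref{eq24g}, and the right side because each summand in $D(\chi')$ is a product of two first-order terms controlled in $L^2$ with weights $e^{\pm \beta x_3}$ that cancel). The estimate \eqref{eq24} then follows directly from \eqref{eq24b}–\eqref{eq24c} by Cauchy–Schwarz, since every monomial in $D(\chi')$ is of the form $\chi'$ times a first-order quantity in $u$ (or $v$) times a zeroth-order quantity in $v$ (or $u$), and the weights $e^{\beta x_3}$ and $e^{-\beta x_3}$ split cleanly into the two factors. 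The $\Omega_+$ statements \eqref{eq24d}--\eqref{eq24h} are obtained by the same argument verbatim, replacing $\Omega$ by $\Omega_+$ and ${\cal C}^{\infty}_0(\overline{\Omega})$ by ${\cal C}^{\infty}_0(\overline{\Omega_+})$; no boundary terms at $\Sigma$ arise because the identity is derived pointwise from the product rule, not by integration by parts. The only mildly delicate point is making sure one has listed every $\chi'$-term correctly in $D(\chi')$—this is bookkeeping, not analysis, and can be organized by treating the in-plane and out-of-plane components of $\eps_{ij}$ separately.
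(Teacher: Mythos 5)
Your proof is correct and follows essentially the same route as the paper, which records this lemma only as ``a straightforward calculation'': your product-rule expansion of $\eps_{ij}(\chi u)$ and $\dive(\chi u)$ reproduces $D(\chi')(u,v)$ in \eqref{eq24b} exactly (including the $\mu\,u_2\partial_3\overline{v}_2$ term coming from $\eps_{23}$ via $\partial_2=0$), and the density argument extending \eqref{eq24a} from ${\cal C}^{\infty}_0(\overline{\Omega},\CC^3)$ is sound. The one caveat is in \eqref{eq24g}: writing $w=e^{\beta x_3}u$, the $\partial_3$-component contributes $\lVert \chi\,\partial_3 w+\chi' w\rVert_{0,\Omega}^2$, whose cross term does not vanish, so ``bounding each summand'' gives the stated right-hand side only up to a factor $2$ --- a harmless imprecision already present in the paper's statement, since only the form of the bound (with some absolute constant) is used in the sequel.
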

%
\begin{proof}
It is a straightforward calculation.
\end{proof}
\begin{lemma}
\label{lemma_c}
Let $\beta \in \RR$. We have: for all $u,v$ $\in H^1(\Omega_+,\CC^3)$,
\begin{equation}
\label{eq10b}
c_{\Omega_+,\beta}(u,v)\overset{def}{=} b_{\Omega_+}(u e^{-\beta x_3}, v e^{\beta x_3})=
\int_{\Omega_+} E(u,v)
\end{equation}
where
\begin{eqnarray}
\label{eq10a}
E(u,v)& = &\displaystyle{ \lambda {\dive} (u e^{-\beta x_3})\dive (\overline{v} e^{\beta x_3}) + 2 \mu \eps_{ij}(u e^{-\beta x_3}) \eps_{ij}(\overline{v} e^{\beta x_3}) - \omega^2 u_i\overline{v}_i}\nonumber\\
&=& \displaystyle{ \lambda (\dive u- \beta u_3)(\dive \overline{v} + \beta \overline{v_3})+ 2 \mu \eps_{11}(u) \eps_{11}(\overline{v})}\nonumber \\
&+&\displaystyle{4 \mu (\eps_{13}(u) -\beta u_1/2 )(\eps_{13}(\overline{v})+ \beta \overline{v_1}/2)}\nonumber\\
&+& \displaystyle{ 2 \mu (\eps_{33}(u) -\beta u_3 )(\eps_{33}(\overline{v})+ \beta \overline{v_3})-\omega^2  u_i\overline{v}_i}.
\end{eqnarray}
Consequently
\begin{eqnarray}
\label{eq10c}
\mbox{Re} c_{\Omega_+,\beta}(u,u) &=& e_{\Omega_+}(u,u) -\beta^2[\mu ||u_1||^2_{0,\Omega_+} + (\lambda+2 \mu) ||u_3||^2_{0,\Omega_+}] \nonumber \\
&-&(\omega^2+1)||u||^2_{0,\Omega_+},\,u \in H^1(\Omega_+,\CC^3).
\end{eqnarray}
\end{lemma}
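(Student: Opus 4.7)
The proof is a direct (if lengthy) computation, based on the observation that the exponential weights $e^{\mp\beta x_3}$ only shift the derivative $\partial_3$ and leave $\partial_1$ untouched. My plan is first to compute how $\dive$ and $\eps_{ij}$ act on $u e^{-\beta x_3}$ and $v e^{\beta x_3}$, then substitute into $b_{\Omega_+}$ to read off $E(u,v)$, and finally take the real part with $v=u$ using a simple cancellation.

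For the first identity, from $\partial_3(u_j e^{-\beta x_3}) = (\partial_3 u_j - \beta u_j)e^{-\beta x_3}$ and $\partial_1(u_j e^{-\beta x_3}) = (\partial_1 u_j)e^{-\beta x_3}$ I would record
\begin{align*}
\dive(u e^{-\beta x_3}) &= (\dive u - \beta u_3) e^{-\beta x_3},\\
\eps_{11}(u e^{-\beta x_3}) &= \eps_{11}(u) e^{-\beta x_3},\\
\eps_{33}(u e^{-\beta x_3}) &= (\eps_{33}(u) - \beta u_3) e^{-\beta x_3},\\
\eps_{13}(u e^{-\beta x_3}) &= (\eps_{13}(u) - \beta u_1/2) e^{-\beta x_3},
\end{align*}
with the analogous formulas for $v e^{\beta x_3}$ obtained by replacing $\beta$ by $-\beta$. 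The exponential factors cancel in the products, and substituting into the expansion \eqref{eq5x} of $\sigma_{ij}(u)\eps_{ij}(\overline{v})$ together with the mass term $-\omega^2 u_i\overline{v_i}$ produces exactly $E(u,v)$, establishing \eqref{eq10b}.

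For \eqref{eq10c}, I would set $v=u$ and exploit the elementary identity
\begin{equation*}
(a-\beta b)(\overline{a}+\beta\overline{b}) = |a|^2 - \beta^2|b|^2 + \beta(a\overline{b} - b\overline{a}),
\end{equation*}
in which the last summand is purely imaginary since $\overline{a\overline{b}} = b\overline{a}$. Applied to the pairs $(\dive u,\,u_3)$, $(\eps_{13}(u),\,u_1/2)$ and $(\eps_{33}(u),\,u_3)$, the $O(\beta)$ contributions drop out of the real part, leaving
\begin{equation*}
\mathrm{Re}\,E(u,u) = \lambda|\dive u|^2 + 2\mu\eps_{ij}(u)\eps_{ij}(\overline{u}) - \beta^2\bigl[\mu|u_1|^2 + (\lambda+2\mu)|u_3|^2\bigr] - \omega^2|u|^2.
\end{equation*}
Integrating over $\Omega_+$ and recognizing the first two summands as $e_{\Omega_+}(u,u) - \|u\|_{0,\Omega_+}^2$ via \eqref{eq1} yields \eqref{eq10c}.

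The only obstacle here is bookkeeping; there is no analytic difficulty whatsoever. The conceptual point that makes the final formula so clean is the observation that the cross-terms $a\overline{b}-b\overline{a}$ are purely imaginary, which kills all the odd-in-$\beta$ contributions and leaves only the $O(1)$ energy $e_{\Omega_+}$ together with the explicit quadratic correction in $\beta$.
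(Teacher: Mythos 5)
Your approach is exactly the paper's (whose entire proof reads ``It is a straightforward calculation''): the weight-shift formulas for $\dive$, $\eps_{11}$, $\eps_{13}$, $\eps_{33}$ are correct, and the observation that the cross-terms $a\overline{b}-b\overline{a}$ are purely imaginary is the right way to see why the odd-in-$\beta$ contributions vanish from the real part. One bookkeeping item: the sum $2\mu\,\eps_{ij}(ue^{-\beta x_3})\eps_{ij}(\overline{v}e^{\beta x_3})$ also contains the antiplane terms $\mu\,\partial_1 u_2\,\partial_1\overline{v}_2$ and $\mu(\partial_3 u_2-\beta u_2)(\partial_3\overline{v}_2+\beta\overline{v}_2)$ coming from $\eps_{12}$ and $\eps_{23}$; the pair $(\eps_{23}(u),\,u_2/2)$ is missing from your list, and it contributes an additional $-\beta^2\mu\lVert u_2\rVert_{0,\Omega_+}^2$ to $\mathrm{Re}\,c_{\Omega_+,\beta}(u,u)$. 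As it happens, this term is also absent from the paper's \eqref{eq10a}--\eqref{eq10c}, so your computation reproduces the statement as printed; either way the discrepancy is harmless for the lemma's only use, the bound \eqref{eq31a}, where it is absorbed into the constant $C$.
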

%
\begin{proof}
It is a staightforward calculation. 
\end{proof}

In order to prove Proposition \ref{prop1} below we shall need a precise form of Korn inequality which a consequence of Theorem 3.1 of \cite{Grabovsky}. Let us recall this theorem.
\begin{theorem}
\label{thGrab}
Let $L>0$ and $0<l<1$. Set $\Omega_{l,L} = (1-l/2,1+l/2) \times (0,L)$. For $u \in H^1(\Omega_{l,L},\RR^2)$ set $e(u)=(\partial_1 u_2 + \partial_2 u_1)/2$. Set $E_{l,L} = \{u \in H^1(\Omega_{l,L},\RR^2), u_2 \vert_{x_2=0} = 0 \}$. If $L>0$, there exists $C(L) >0$ such that if $0<l<1$ and $u \in E_{l,L}$,
\begin{equation}
\label{eq10c1}
 |u|^2_{1,\Omega_{l,L}} \leq C(L) ||e(u)||_{0,\Omega_{l,L}}(||u||_{0,\Omega_{l,L}}/l + ||e(u)||_{0,\Omega_{l,L}}).
\end{equation}
\end{theorem}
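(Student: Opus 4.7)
My plan is to prove this quantitative Korn-type inequality by combining an anisotropic rescaling with a Korn identity and a sharp thin-strip trace inequality. Introduce the change of variables $y_1 = (x_1-1+l/2)/l$, $y_2 = x_2$ mapping $\Omega_{l,L}$ onto the fixed rectangle $\hat{\Omega} = (0,1)\times(0,L)$. The derivatives transform as $\partial_1 = l^{-1}\partial_{y_1}$ and $\partial_2 = \partial_{y_2}$, so the strain components rescale anisotropically, and the claim reduces to an $l$-uniform weighted Korn estimate on $\hat{\Omega}$ with constants depending only on $L$.

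The core ingredient is a Korn-type identity that expresses $\int|\nabla u|^2$ in terms of $\int|e(u)|^2$ modulo a boundary integral coming from null-Lagrangian terms such as $\det\nabla u = \partial_1(u_1\partial_2 u_2) - \partial_2(u_1\partial_1 u_2)$. Applying the divergence theorem and using the Dirichlet condition $u_2\vert_{x_2=0}=0$ (and its consequence $\partial_1 u_2\vert_{x_2=0}=0$) eliminates the contribution from the bottom side; the contributions from the top $x_2=L$ and the two short vertical sides $x_1=1\pm l/2$ are then controlled by the thin-strip trace inequality
\begin{equation*}
\|u\vert_{x_1=c}\|_{0,(0,L)}^2 \leq C(L)\bigl(l^{-1}\|u\|_{0,\Omega_{l,L}}^2 + \|u\|_{0,\Omega_{l,L}}\,\|\partial_1 u\|_{0,\Omega_{l,L}}\bigr),
\end{equation*}
whose constant depends only on $L$. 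A further integration by parts in $x_2$ on the vertical-side integrals, combined with the Dirichlet condition, trades factors of $\partial_2 u$ against $u$ (via the fundamental theorem of calculus) and $\partial_2 u_1$ (which in turn is controlled by $e(u)$ and $\partial_1 u_2$). After Young's inequality the gradient contributions are absorbed into the left-hand side while the $l^{-1}$ factor survives only in front of $\|u\|_0^2$.

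The principal obstacle is the sharp $l$-bookkeeping: the claim demands $C(L)$ independent of $l$ and requires the $l^{-1}$ factor to appear only on the mixed term $\|u\|_0\,\|e(u)\|_0/l$, not on $\|e(u)\|_0^2$. A direct application of the classical Korn inequality on $\hat{\Omega}$ after rescaling yields a constant that blows up like $l^{-2}$ when unrescaled. The improvement comes from systematically converting bulk integrals into boundary integrals via the null-Lagrangian structure—which carry the correct scaling through the thin-strip trace inequality—combined with Young's inequality applied at scales chosen so that $\|e(u)\|_0^2$ never picks up a negative power of $l$. Carrying this bookkeeping through explicitly and verifying that the emergent constant depends only on $L$ is the technical core of Grabovsky's Theorem 3.1.
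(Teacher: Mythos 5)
First, a point of reference: the paper does not prove this statement. Theorem \ref{thGrab} is quoted verbatim from \cite{Grabovsky} (Theorem 3.1) and is used as a black box to deduce Theorem \ref{thKorn} by rescaling, so there is no in-paper proof to compare yours with; you are in effect reproving a published result. Note also that your argument tacitly reads $e(u)$ as the full symmetrized gradient: you need $\partial_1 u_1$ and $\partial_2 u_2$ to be controlled by the right-hand side for the null-Lagrangian term $\partial_1 u_1\,\partial_2 u_2-\partial_2 u_1\,\partial_1 u_2$ to be of any use. That is the reading consistent with \cite{Grabovsky} and with the way the theorem is used in Theorem \ref{thKorn}; with the literal definition $e(u)=(\partial_1 u_2+\partial_2 u_1)/2$ appearing in the statement, the inequality is false (take $u=(f(x_1),0)$ with $f$ nonconstant: the left side is positive while $e(u)=0$).

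The substantive issue is that what you have written is a plan rather than a proof, and the step you yourself defer ("carrying this bookkeeping through explicitly \ldots is the technical core") is where the entire difficulty lies. Worse, the route as described meets a concrete obstruction before the bookkeeping starts. Applying the divergence theorem to $\det\nabla u=\partial_1(u_1\partial_2 u_2)-\partial_2(u_1\partial_1 u_2)$ produces, on the two sides $x_1=1\pm l/2$ (these are the \emph{long} sides, of length $L$; the short sides are $x_2=0$ and $x_2=L$), the integrals $\pm\int_0^L u_1\,\partial_2 u_2\,dx_2$, and on $x_2=L$ the integral $-\int u_1\,\partial_1 u_2\,dx_1$. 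These involve traces of first derivatives of an $H^1$ function; the thin-strip trace inequality you quote controls traces of $u$ itself, not of $\nabla u$, and tangential integration by parts merely exchanges $\partial_2 u_2$ for $\partial_2 u_1$ while generating corner terms, so the scheme does not visibly close --- and closing it is precisely where the exponent of $l$ is decided. For comparison, the argument in \cite{Grabovsky} does not go this way: it exploits the identity $\Delta u_1=\partial_1 e_{11}+2\partial_2 e_{12}-\partial_1 e_{22}$ to split $u_1$ into an $H^1_0$ part whose gradient is bounded by $\lVert e(u)\rVert_{0,\Omega_{l,L}}$ plus a harmonic remainder, and the factor $l^{-1}$ on the mixed term comes from estimates for harmonic functions on thin rectangles. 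Either carry out the boundary-term analysis in full (supplying the missing control of the derivative traces), or do as the paper does and simply cite \cite{Grabovsky}.
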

A consequence of Theorem \ref{thGrab} is the following one
\begin{theorem}
\label{thKorn}
For $R>0$ set $\Omega_{R}= (-h,h) \times (0,R)$. 
There exists $C>0$ such that for all $R>0$, for all $u \in H^1(\Omega_{R},\CC^3)$ satisfying $u\vert_{\Sigma}=0$,
\begin{equation}
\label{eq10c2}
 |u|^2_{1,\Omega_{R}} \leq C \left(\sum_{i=1}^3 \sum_{j=1}^3 ||\eps_{ij}(u)||^2_{0,\Omega_{R}}+ ||u||^2_{0,\Omega_{R}}\right).
\end{equation}
\end{theorem}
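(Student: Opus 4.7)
The out-of-plane component $u_2$ requires no real Korn argument: since $\partial_2 \equiv 0$ in the strip geometry, one has $\eps_{12}(u) = \partial_1 u_2/2$ and $\eps_{23}(u) = \partial_3 u_2/2$, whence
\[
|u_2|^2_{1,\Omega_R} = 4\bigl(\|\eps_{12}(u)\|^2_{0,\Omega_R} + \|\eps_{23}(u)\|^2_{0,\Omega_R}\bigr),
\]
which is already dominated by $\sum_{i,j}\|\eps_{ij}(u)\|^2_{0,\Omega_R}$. So the real task is a 2D Korn-type inequality for the in-plane pair $v := (u_1,u_3) \in H^1(\Omega_R,\CC^2)$ with $v\vert_\Sigma = 0$, with a constant independent of $R$.

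The plan is to split into two regimes. For $R \leq R_0$ (fixed, say $R_0 = 1$), I would apply Theorem \ref{thGrab} directly to $v$: Grabovsky's strip $\Omega_{l,L} = (1-l/2,1+l/2)\times(0,L)$ matches $\Omega_R$ with $l = 2h$ and $L = R$ up to a translation in $x_1$ (and possibly a linear rescaling in $x_1$ to enforce $l<1$ if $2h \geq 1$); Grabovsky's strain $e(u)$ identifies with $\eps_{13}(v)$, and his boundary condition $u_2\vert_{x_2=0}=0$ is implied by $v\vert_\Sigma = 0$. The resulting bound
\[
|v|^2_{1,\Omega_R} \leq C(R)\,\|\eps_{13}(v)\|_{0,\Omega_R}\bigl(\|v\|_{0,\Omega_R}/(2h) + \|\eps_{13}(v)\|_{0,\Omega_R}\bigr),
\]
combined with Young's inequality $ab \leq (a^2+b^2)/2$, would give the desired form provided $C(L)$ stays bounded for $L \in (0,R_0]$. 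For $R > R_0$, I would partition $(0,R)$ into subintervals of length at most $R_0$ and apply a standard 2D Korn inequality on each subdomain $(-h,h)\times(kR_0,(k+1)R_0)$; by translation invariance in $x_3$ these subdomains are all congruent Lipschitz rectangles, so the local Korn constants are uniform, and summation preserves $R$-independence.

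The main obstacle will be verifying that Grabovsky's constant $C(L)$ remains bounded for $L$ in a bounded interval, since Theorem \ref{thGrab} only asserts existence of a finite $C(L)$ for each $L>0$ without explicit uniformity. A clean workaround in the small-$R$ regime is to extend $v$ by zero through $\Sigma$ (admissible precisely because its trace there vanishes) into the fixed Lipschitz rectangle $(-h,h)\times(-R_0,R_0)$, apply classical Korn there, and restrict back; this sidesteps the uniformity question entirely. Once the in-plane bound is obtained with an $R$-independent constant, combining it with the $u_2$ estimate of the first step yields Theorem \ref{thKorn}.
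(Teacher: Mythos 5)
Your reduction to the out-of-plane component $u_2$ and the in-plane pair $(u_1,u_3)$ is exactly what the theorem rests on (the paper leaves this step implicit, passing silently from $\CC^3$ to $\CC^2$ in its proof). For the in-plane part you take a genuinely different route. The paper's proof is a one-line rescaling: it maps $\Omega_R$ onto Grabovsky's domain with $L=1$ \emph{fixed} and $l=2h/R$ \emph{variable}, so that the only $R$-dependence surviving in \eqref{eq10c1} sits in the explicit factor $\|u\|_0/l$, while $|v|_1$, $\|e(v)\|_0$ and $\|v\|_0/l$ are all invariant under the dilation; the single constant $C(1)$ then does all the work. You instead try Grabovsky with $l=2h$ fixed and $L=R$ variable, correctly identify that nothing is known about the uniformity of $C(L)$, and then bypass Theorem \ref{thGrab} entirely by a covering argument: the second Korn inequality (the version with the $\|u\|_0^2$ term and no boundary condition) on one fixed reference rectangle, summed over congruent translates covering $\Omega_R$. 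That argument is sound and in fact more elementary than the paper's: for $R\ge R_0$ it does not use the Dirichlet condition on $\Sigma$ at all, and it is all that is needed for Proposition \ref{prop1}, where the inequality is only invoked for compactly supported functions, i.e.\ for large $R$. What Grabovsky's theorem buys the paper is a single uniform statement for all thin aspect ratios at once; what your covering buys is independence from a nontrivial external result.

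Two small repairs are needed. First, "subintervals of length at most $R_0$" are not all congruent, and an arbitrarily short last piece has a degenerate Korn constant; use instead \emph{overlapping} congruent pieces $(-h,h)\times(kR_0/2,\,kR_0/2+R_0)$, each point being covered at most twice, so that summation only costs a factor $2$. Second, for $R<R_0$ the zero extension across $\Sigma$ lives on $(-h,h)\times(-R_0,R)$, not on the fixed rectangle $(-h,h)\times(-R_0,R_0)$: you cannot extend by zero across $\{x_3=R\}$, where $u$ has no reason to vanish. The extended domain nevertheless has $x_3$-extent in $(R_0,2R_0]$, and the same overlapping-cover device gives a uniform constant for that family, so the idea survives. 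Finally, the parenthetical "linear rescaling in $x_1$ to enforce $l<1$" would not work, since an anisotropic rescaling does not commute with the symmetric gradient; note that the paper's own proof shares this blind spot for small $R$, as $l=2h/R<1$ fails when $R\le 2h$.
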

%
\begin{proof}
If $R>0$ and $u \in H^1(\Omega_{R},\CC^2)$ satisfies $u\vert_{\Sigma}=0$, define $v \in H^1(\Omega_{2h/R,1},\CC^2)$ by $v(x_1,x_3) = u(R(x_1-1),Rx_3)$ for $(x_1,x_3) \in \Omega_{2h/R,1}$. Apply Theorem \ref{thGrab} with $l = 2h/R$ and $L=1$. We have $||v||_{0,\Omega_{2h/R,1}} = ||u||_{0,\Omega_{R}}/R$, $|v|_{1,\Omega_{2h/R,1}} = |u|_{1,\Omega_{R}}$, $||e(v)||_{0,\Omega_{2h/R,1}} = ||e(u)||_{0,\Omega_{R}}$. We obtain:
\begin{equation}
\label{eq10c3}
 |u|^2_{1,\Omega_{R}} \leq C(1) ||e(u)||_{0,\Omega_{R}}(||u||_{0,\Omega_{R}}/{2h} + ||e(u)||_{0,\Omega_{R}}).
\end{equation}
\eqref{eq10c2} is a consequence of \eqref{eq10c3}.
\end{proof}

The following proposition is fundamental to show that if $\beta$ is such that $\mathscr{L}(\nu)$ has no eigenvalue on the lines $Re \nu = - \beta$ and $Re \nu = \beta$, $B_{\Omega_+,\beta}$ is a Fredholm operator (Proposition \ref{propfredholm}).
\begin{proposition}
\label{prop1}
Let $\beta \in \RR$ be such that $\mathscr{L}(\nu)$ has no eigenvalue on the line $Re \nu = - \beta$.
There exists $C>0$ and $R>0$ such that if $\Omega_R = (-h,h) \times (0,R)$,
\begin{equation}
\label{eq25}
||u||_{\beta,1,\Omega_+} \leq C(||B_{\Omega_+,\beta}u||_{-\beta,\Sigma,1,\Omega_+,*} + ||u||_{0,\Omega_R}),\, u \in W^1_{\beta,\Sigma}(\Omega_+,\CC^3).
\end{equation}
\end{proposition}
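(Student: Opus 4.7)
My plan is to split $u$ via a cutoff in $x_3$ and treat the two pieces separately: the part away from $\Sigma$ by extension to the full strip $\Omega$ and the isomorphism of Theorem \ref{thisom}, and the compactly supported part near $\Sigma$ by a Caccioppoli-type argument combining the Korn inequality of Theorem \ref{thKorn} with the coercivity of the elasticity form.

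First I would fix $\chi \in \mathcal{C}^{\infty}(\RR_+, \RR)$ with $0 \le \chi \le 1$, $\chi \equiv 0$ on $[0,1]$ and $\chi \equiv 1$ on $[2, \infty)$. Since $\chi u$ vanishes near $\Sigma$, its zero-extension $\widetilde{\chi u}$ lies in $W^{1}_{\beta}(\Omega, \CC^{3})$ and Theorem \ref{thisom} gives $\|\widetilde{\chi u}\|_{\beta, 1, \Omega} \le C \|B_{\Omega, \beta}\widetilde{\chi u}\|_{-\beta, 1, \Omega, *}$. For $v \in W^{1}_{-\beta}(\Omega, \CC^{3})$, Lemma \ref{lemma_gd} commutes $\chi$ through $b_{\Omega}$ and yields $\langle B_{\Omega, \beta}\widetilde{\chi u}, v\rangle = \langle B_{\Omega_+, \beta} u, \chi v|_{\Omega_+}\rangle + c_{\Omega}(\chi')(\widetilde u, \bar v)$, where $\chi v|_{\Omega_+} \in W^{1}_{-\beta, \Sigma}(\Omega_+, \CC^{3})$ and the commutator is supported in the bounded slab $\{1 < x_3 < 2\}$. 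Combining with the trivial estimate $\|(1-\chi) u\|_{\beta, 1, \Omega_+} \le C \|u\|_{1, \Omega_2}$ (the weight is bounded on $\Omega_2$) I obtain
$$\|u\|_{\beta, 1, \Omega_+} \le C \bigl(\|B_{\Omega_+, \beta} u\|_{-\beta, \Sigma, 1, \Omega_+, *} + \|u\|_{1, \Omega_2}\bigr). \qquad(\star)$$

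The remaining task is to upgrade $\|u\|_{1, \Omega_2}$ to an $L^{2}$ norm on a slightly larger set. I introduce $\eta \in \mathcal{C}^{\infty}(\RR_+)$ with $\eta \equiv 1$ on $[0, 2]$ and $\eta \equiv 0$ on $[3, \infty)$, and test $B_{\Omega_+, \beta} u$ against $\eta^{2} \bar u$, which is admissible since $u|_{\Sigma} = 0$. Expanding $\eps_{ij}(\eta^{2} u) = \eta^{2} \eps_{ij}(u) + \eta \eta' (\delta_{i3} u_{j} + \delta_{j3} u_{i})$ and using the symmetry of $\sigma_{ij}$,
$$\operatorname{Re} b_{\Omega_+}(u, \eta^{2} u) = \int_{\Omega_+}\! \eta^{2} \bigl[\lambda |\dive u|^{2} + 2\mu\, \textstyle\sum_{ij}|\eps_{ij}(u)|^{2}\bigr] + 2 \operatorname{Re}\! \int_{\Omega_+}\! \eta \eta' \sigma_{3j}(u) \bar u_{j} - \omega^{2}\! \int_{\Omega_+}\! \rho \eta^{2} |u|^{2}.$$
The leading term dominates $\alpha \int \eta^{2} \sum_{ij} |\eps_{ij}(u)|^{2}$ by positivity of the elasticity tensor ($\mu > 0$, $3\lambda + 2\mu > 0$). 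Applying Theorem \ref{thKorn} to $\eta u \in H^{1}_{\Sigma}(\Omega_3, \CC^{3})$ yields $\int \eta^{2} |\nabla u|^{2} \le C(\int \eta^{2} \sum |\eps_{ij}|^{2} + \|u\|_{0, \Omega_3}^{2})$, which controls the cross term by Young as $\tfrac{\alpha}{4} \int \eta^{2} \sum |\eps_{ij}|^{2} + C \|u\|_{0, \Omega_3}^{2}$. On the other hand $|b_{\Omega_+}(u, \eta^{2} u)| \le \|B_{\Omega_+, \beta} u\|_{*} \|\eta^{2} u\|_{-\beta, 1, \Omega_+}$ with $\|\eta^{2} u\|_{-\beta, 1, \Omega_+}^{2} \le C(\|u\|_{0, \Omega_3}^{2} + \int \eta^{2} |\nabla u|^{2})$, so one more application of Korn and Young lets the gradient terms absorb into the left side. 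The result is $\int \eta^{2} \sum |\eps_{ij}(u)|^{2} \le C(\|B_{\Omega_+, \beta} u\|_{*}^{2} + \|u\|_{0, \Omega_3}^{2})$, and Korn once more on $\Omega_2$ gives $\|u\|_{1, \Omega_2}^{2} \le C(\|B_{\Omega_+, \beta} u\|_{*}^{2} + \|u\|_{0, \Omega_3}^{2})$. Substituting into $(\star)$ closes the argument with $R = 3$.

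The main obstacle is the circular appearance of $\|\nabla u\|_{L^{2}}$ on both sides of the test-function identity in the second step: the duality bound introduces $\int \eta^{2} |\nabla u|^{2}$ on the right, and only the precise Korn inequality of Theorem \ref{thKorn}---whose constant is uniform in $R$ thanks to Grabovsky's Theorem \ref{thGrab}---allows this gradient to be re-expressed in terms of strain plus a pure $L^{2}$ remainder and then absorbed via Young's inequality. Without this uniform Korn estimate the absorption would fail, which is exactly why Theorems \ref{thGrab} and \ref{thKorn} are invoked at this point.
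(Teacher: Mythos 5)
Your proof is correct, and while it shares the paper's overall architecture (cutoff decomposition in $x_3$, Theorem \ref{thisom} for the part supported away from $\Sigma$, Korn plus ellipticity for the part near $\Sigma$), the mechanism by which you close the estimate is genuinely different. The paper uses the dilated cutoffs $\chi_n(\cdot)=\chi(\cdot/n)$, so that every commutator term carries a factor $\lVert\chi'/n\rVert_\infty$ and the resulting multiple of $\lVert u\rVert_{\beta,1,\Omega_+}$ is absorbed into the left-hand side by taking $n$ large; the only local term that survives is the $L^2$ quantity $\lVert\chi_n u\rVert_{\beta,0,\Omega_+}$, and the price is that Korn must hold on $\Omega_{2n}$ with a constant \emph{uniform in} $n$ --- which is precisely why Theorem \ref{thGrab} is invoked. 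You instead keep the cutoff fixed, accept that the commutator produces the full local $H^1$ norm $\lVert u\rVert_{1,\Omega_2}$ in $(\star)$, and then run a second, Caccioppoli-type estimate (testing against $\eta^2\bar u$ rather than the paper's weighted test function $\chi_n e^{2\beta x_3}u$ of \eqref{eq32a}) to downgrade $\lVert u\rVert_{1,\Omega_2}$ to $\lVert u\rVert_{0,\Omega_3}$. Both routes are sound; yours yields a fixed, explicit $R=3$, whereas the paper's yields $R=2n$ with $n$ determined by the absorption. One remark on your closing paragraph: for \emph{your} argument the $R$-uniformity of the Korn constant is not actually what matters, since you only ever apply Korn on the fixed domain $\Omega_3$ (where the classical second Korn inequality, as in \eqref{eq31}, already suffices); the uniformity in $R$ is essential only for the paper's $\chi_n$-absorption scheme, where the domain grows with $n$.
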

%
\begin{proof}
Let $\chi \in {\cal C}^{\infty}_0(\RR_+,\RR)$ with the properties that $\chi =1$ on $[0,1]$, supp $\chi$ $\subset$ $[0,2]$, $0 \leq \chi \leq 1$ and for $n \in \NN^*$, set $\chi_n(.)= \chi(./n)$.
We can write
\begin{equation}
\label{eq21}
||u||_{\beta,1,\Omega_+} \leq ||\chi_n(x_3) u||_{\beta,1,\Omega_+} + ||(1-\chi_n(x_3)) u||_{\beta,1,\Omega_+}, u \in W^1_{\beta}(\Omega_+,\CC^3), n \in \NN^* .
\end{equation}
Thanks to Theorem \ref{thKorn} and with notation \eqref{eq1}, there exists $C>0$ satisfying
\begin{equation}
\label{eq30b}
||\chi_n(x_3) u||^2_{1,\Omega_+} \leq C e_{\Omega_+}(\chi_n(x_3) u, \chi_n(x_3) u),\,u \in H^1_{\Sigma}(\Omega_+,\CC^3),n \in \NN^*. 
\end{equation}
It follows that for all $u \in W^1_{\beta,\Sigma}(\Omega_+,\CC^3)$, $n \in \NN^*$, 
\begin{equation}
\label{eq30c}
||\chi_n(x_3) e^{\beta x_3}u||^2_{1,\Omega_+} \leq C e_{\Omega_+}(\chi_n(x_3) e^{\beta x_3}u, \chi_n(x_3) e^{\beta x_3}u). 
\end{equation}
From \eqref{eq10c}, there exists $C>0$ satisfying
\begin{equation}
\label{eq31a}
e_{\Omega_+}(u, u) \leq \mbox { Re} c_{\Omega_+,\beta}(u,u) + C ||u||^2_{0,\Omega_+},\,u \in H^1(\Omega_+,\CC^3).
\end{equation}
Taking into account \eqref{eq30c} and \eqref{eq31a}, there exists $C>0$ such that if $u$ $\in W^1_{\beta,\Sigma}(\Omega_+,\CC^3)$ and $n \in \NN^*$,
\begin{eqnarray}
\label{eq32a}
||\chi_n(x_3) u||^2_{\beta,1,\Omega_+} &
\leq &
 C (\mbox { Re} c_{\Omega_+,\beta}(\chi_n(x_3) e^{\beta x_3}u, \chi_n(x_3) e^{\beta x_3}u) +  ||\chi_n(x_3) u||^2_{\beta,0,\Omega_+})\nonumber\\
 & \leq &
 C  (|c_{\Omega_+,\beta}(\chi_n(x_3) e^{\beta x_3}u, \chi_n(x_3) e^{\beta x_3}u)| +  ||\chi_n(x_3) u||^2_{\beta,0,\Omega_+})\nonumber\\
  & = &
 C  (|b_{\Omega_+}(\chi_n(x_3) u, \chi_n(x_3) e^{2\beta x_3}u)| +  ||\chi_n(x_3) u||^2_{\beta,0,\Omega_+})\nonumber\\
&\leq & C (||B_{\Omega_+,\beta}(\chi_n(x_3) u)||_{-\beta,\Sigma,1,\Omega_+,*}
||\chi_n(x_3) u||_{\beta,1,\Omega_+} \nonumber \\
&+& ||\chi_n(x_3) u||^2_{\beta,0,\Omega_+}),
\end{eqnarray}
accordingly there exists $C>0$ such that if $u$ $\in W^1_{\beta,\Sigma}(\Omega_+,\CC^3)$ and $n \in \NN^*$,
\begin{equation}
\label{eq33a}
||\chi_n(x_3) u||_{\beta,1,\Omega_+} \leq C (||B_{\Omega_+,\beta}(\chi_n(x_3) u)||_{-\beta,\Sigma,1,\Omega_+,*} + ||\chi_n(x_3) u||_{\beta,0,\Omega_+}).
\end{equation}
On the other hand due to \eqref{eq24f} there exists $C>0$ such that if $u \in W^1_{\beta,\Sigma}(\Omega_+,\CC^3)$ and $n \in \NN^*$,
\begin{eqnarray}
\label{eq34}
||B_{\Omega_+,\beta}(\chi_n(x_3) u)||_{-\beta,\Sigma,1,\Omega_+,*}&=& 
\underset{v \in W^1_{-\beta,\Sigma}(\Omega_+,\CC^3), ||v||_{-\beta,1,\Omega_+} \leq 1}{\mbox{sup}} |b_{\Omega_+}(\chi_n(x_3) u, v)|\nonumber\\
&\leq& \underset{v \in W^1_{-\beta,\Sigma}(\Omega_+,\CC^3), ||v||_{-\beta,1,\Omega_+} \leq 1}{\mbox{sup}} |b_{\Omega_+}(u, \chi_n(x_3) v)| \nonumber\\
&+& C||\chi'/n||_{\infty} ||u||_{\beta,1, \Omega_+}
\end{eqnarray}
and from \eqref{eq24h}, there exists $C>0$ such that if $u \in W^1_{\beta,\Sigma}(\Omega_+,\CC^3)$, $v \in W^1_{-\beta,\Sigma}(\Omega_+,\CC^3)$ and $n \in \NN^*$,
\begin{eqnarray}
\label{eq35b}
|b_{\Omega_+}(u, \chi_n(x_3) v)| &\leq &
||B_{\Omega_+,\beta}(u)||_{-\beta,\Sigma,1,\Omega_+,*}
||\chi_n(x_3) v||_{-\beta,1,\Omega_+}\nonumber\\
&\leq&
C||B_{\Omega_+,\beta}(u)||_{-\beta,\Sigma,1,\Omega_+,*}
||v||_{-\beta,1,\Omega_+}.
\end{eqnarray}
Owing to \eqref{eq33a}-\eqref{eq35b} there exists $C>0$ such that if $u \in W^1_{\beta,\Sigma}(\Omega_+,\CC^3)$ and $n \in \NN^*$,
\begin{eqnarray}
\label{eq37}
||\chi_n(x_3) u||_{\beta,1,\Omega_+} &\leq& C (||B_{\Omega_+,\beta}(u)||_{-\beta,\Sigma,1,\Omega_+,*} +  ||\chi_n(x_3) u||_{\beta,0,\Omega_+} \nonumber \\
&+& ||\chi'/n||_{\infty} ||u||_{\beta,1,\Omega_+}).
\end{eqnarray}
Moreover $\beta$ is such that $\mathscr{L}(\nu)$ has no eigenvalue on the line $Re \nu = - \beta$, therefore
 $B_{\Omega,\beta}$ is an isomorphism from 
$W^1_{\beta}(\Omega,\CC^3)$ onto $({W}^1_{-\beta}(\Omega,\CC^3))^*$ (Theorem \ref{thisom}): there exists $C>0$ such that if $u \in W^1_{\beta,\Sigma}(\Omega_+,\CC^3)$ and $n \in \NN^*$,
\begin{eqnarray}
\label{eq38}
||(1-\chi_n(x_3)) u||_{\beta,1,\Omega_+} &=& ||(1-\chi_n(x_3)) u||_{\beta,1,\Omega}\nonumber \\
&\leq& C ||B_{\Omega,\beta}((1-\chi_n(x_3))u)||_{-\beta,1,\Omega,*}.
\end{eqnarray}
On the other hand from \eqref{eq24f} there exists $C>0$ such that if $u \in W^1_{\beta,\Sigma}(\Omega_+,\CC^3)$ and $n \in \NN^*$,
\begin{eqnarray}
\label{eq39}
&&||B_{\Omega,\beta}((1-\chi_n(x_3)) u)||_{-\beta,1,\Omega,*}=
\underset{v \in W^1_{-\beta}(\Omega,\CC^3), ||v||_{-\beta,1,\Omega} \leq 1}{\mbox{sup}} |b_{\Omega}((1-\chi_n(x_3)) u, v)|\nonumber\\
&&= 
\underset{v \in W^1_{-\beta}(\Omega,\CC^3), ||v||_{-\beta,1,\Omega} \leq 1}{\mbox{sup}} |b_{\Omega_+}((1-\chi_n(x_3)) u, (1-\chi_{n/2}(x_3))v)|\nonumber\\
&&\leq
\underset{v \in W^1_{-\beta}(\Omega,\CC^3), ||v||_{-\beta,1,\Omega} \leq 1}{\mbox{sup}} |b_{\Omega_+}(u,(1-\chi_n(x_3)) v)|
+ C||\chi'/n||_{\infty} ||u||_{\beta,1, \Omega_+}
\end{eqnarray}
and thanks to \eqref{eq24g} there exists $C>0$ such that if $u \in W^1_{\beta,\Sigma}(\Omega_+,\CC^3)$, $v \in W^1_{-\beta}(\Omega,\CC^3)$ and $n \in \NN^*$,
\begin{eqnarray}
\label{eq35c}
|b_{\Omega_+}(u, (1-\chi_n(x_3) )v)| &\leq &
||B_{\Omega_+,\beta}(u)||_{-\beta,\Sigma,1,\Omega_+,*}
||(1-\chi_n(x_3)) v||_{-\beta,1,\Omega_+}\nonumber\\
&\leq&
C||B_{\Omega_+,\beta}(u)||_{-\beta,\Sigma,1,\Omega_+,*}
||v||_{-\beta,1,\Omega}.
\end{eqnarray}
In view of  \eqref{eq38}-\eqref{eq35c}, there exists $C>0$ such that if $u \in W^1_{\beta,\Sigma}(\Omega_+,\CC^3)$ and $n \in \NN^*$,
\begin{equation}
\label{eq42}
||(1- \chi_n(x_3))u||_{\beta,1,\Omega_+} \leq C (||B_{\Omega_+,\beta}(u)||_{-\beta,\Sigma,1,\Omega_+,*}  +  ||\chi'/n||_{\infty} ||u||_{\beta,1,\Omega_+}).
\end{equation}
Thanks to \eqref{eq21}, \eqref{eq37}, \eqref{eq42}, there exists $C>0$ such that if $u \in W^1_{\beta,\Sigma}(\Omega_+,\CC^3)$ and $n \in \NN^*$,
\begin{equation}
\label{eq43}
||u||_{\beta,1,\Omega_+} \leq C (||B_{\Omega_+,\beta}(u)||_{-\beta,\Sigma,1,\Omega_+,*}  +  ||\chi_n(x_3) u||_{\beta,0,\Omega_+} +  ||\chi'/n||_{\infty} ||u||_{\beta,1,\Omega_+}).
\end{equation}
Choose $n$ with the property that $C ||\chi'/n||_{\infty} \leq 1/2$ and the proposition follows.
\end{proof}

From \cite{Bourgeois-Chesnel-Fliss}, Lemma A.1, it follows that if 
$\beta \in \RR$ is such that $\mathscr{L}(\nu)$ has no eigenvalue on the line $Re \nu = - \beta$, the kernel of $B_{\Omega_+,\beta}$ is finite-dimensional and its range is closed.
On the other hand we know that for all $\beta \in \RR$, $(B_{\Omega_+,\beta})^*$ = $B_{\Omega_+,-\beta}$ so that if $\beta \in \RR$ is such that $\mathscr{L}(\nu)$ has no eigenvalue on the lines $Re \nu = - \beta$ and $Re \nu = \beta$, moreover the kernel of $B_{\Omega_+,-\beta}$ = $(B_{\Omega_+,\beta})^*$ is finite-dimensional and consequently the range of $B_{\Omega_+,\beta}$ (which is closed) has a finite codimension (see \cite{Kato}, Lemma 1.40, p.141). Since in the previous reasoning $\beta$ can be replaced by $-\beta$ we have shown
\begin{proposition}
\label{propfredholm}
If $\beta \in \RR$ is such that $\mathscr{L}(\nu)$ has no eigenvalue on the lines $Re \nu = - \beta$ and $Re \nu = \beta$,
the operators $B_{\Omega_+,\beta}$  and $B_{\Omega_+,-\beta}$ are Fredholm.
\end{proposition}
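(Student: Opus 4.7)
The proof essentially assembles pieces already established. The plan is to combine the a priori estimate of Proposition \ref{prop1}, a standard abstract Peetre-type lemma (cited as Lemma A.1 of \cite{Bourgeois-Chesnel-Fliss}), and the self-adjointness relation $(B_{\Omega_+,\beta})^* = B_{\Omega_+,-\beta}$, together with the classical duality between kernels of adjoints and cokernels of operators with closed range.

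First I would check that the hypothesis of Proposition \ref{prop1} is symmetric in $\beta \mapsto -\beta$: assuming $\mathscr{L}(\nu)$ has no eigenvalue on either of the lines $\mathrm{Re}\,\nu = -\beta$ or $\mathrm{Re}\,\nu = \beta$, both $B_{\Omega_+,\beta}$ and $B_{\Omega_+,-\beta}$ satisfy an estimate of the form
\begin{equation*}
\|u\|_{\beta,1,\Omega_+} \leq C\bigl(\|B_{\Omega_+,\beta}u\|_{-\beta,\Sigma,1,\Omega_+,*} + \|u\|_{0,\Omega_R}\bigr),
\end{equation*}
respectively the analogous estimate with $\beta$ replaced by $-\beta$. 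Since the embedding $W^1_{\beta,\Sigma}(\Omega_+,\CC^3) \hookrightarrow L^2(\Omega_R,\CC^3)$ is compact (by Rellich on the bounded set $\Omega_R$, noting that the weight is bounded on $\Omega_R$), this is exactly the setting of the abstract lemma of \cite{Bourgeois-Chesnel-Fliss}, which yields that the kernel is finite-dimensional and the range is closed. Applying this to both operators gives: $\ker B_{\Omega_+,\beta}$ and $\ker B_{\Omega_+,-\beta}$ are both finite-dimensional, and both ranges are closed.

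Next I would invoke the adjoint identity $(B_{\Omega_+,\beta})^* = B_{\Omega_+,-\beta}$ (recorded just after \eqref{eq9a}). By Lemma 1.40, p.141 of \cite{Kato}, for an operator with closed range the codimension of the range equals the dimension of the kernel of the adjoint. Hence the codimension of the range of $B_{\Omega_+,\beta}$ equals $\dim \ker B_{\Omega_+,-\beta} < +\infty$, and symmetrically for $B_{\Omega_+,-\beta}$. This gives the Fredholm property for both operators.

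The only subtle step is the invocation of the abstract lemma from \cite{Bourgeois-Chesnel-Fliss}: one must verify that the estimate in Proposition \ref{prop1} is a genuine compactness-up-to-finite-dimensional-kernel estimate, i.e. that the ``remainder'' norm $\|u\|_{0,\Omega_R}$ is compact relative to $\|u\|_{\beta,1,\Omega_+}$. Everything else is a routine adjoint/duality bookkeeping, so I do not anticipate any obstacle beyond citing the right references.
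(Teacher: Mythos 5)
Your proposal is correct and follows essentially the same route as the paper: Proposition \ref{prop1} combined with the Peetre-type Lemma A.1 of \cite{Bourgeois-Chesnel-Fliss} gives finite-dimensional kernel and closed range for both $B_{\Omega_+,\beta}$ and $B_{\Omega_+,-\beta}$, and then the identity $(B_{\Omega_+,\beta})^* = B_{\Omega_+,-\beta}$ together with \cite{Kato}, Lemma 1.40 yields finite codimension of the range. Your extra remark verifying that the term $\|u\|_{0,\Omega_R}$ is compact relative to $\|u\|_{\beta,1,\Omega_+}$ (Rellich on the bounded set $\Omega_R$) is a correct and worthwhile precision that the paper leaves implicit.
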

\section{Asymptotics in the half-strip $\Omega_+$.}
\label{Asymptotics}
\setcounter{equation}{0}
This section is devoted to the asymptotics of solutions to the variational formulation in the half-strip (Proposition \ref{asymt}).
Hereafter in this paper, $\chi$ is a function such that $\chi \in {\cal C}^{\infty}(\RR,\RR)$ and $\chi(x) =0$ for $x \leq 1$, $\chi(x) =1$ for $x \geq 2$.

In this section  $\beta$  and  $\gamma$ are real numbers such that $\beta < \gamma$ and we shall use the notations of Proposition \ref{pro2sol}, (a).
The eigenvalues $\nu_i$ of ${\mathscr{L}}(\nu)$ in Proposition \ref{pro2sol}, (a) fulfill the condition $-\gamma < Re \nu_i < - \beta$ therefore with the notation \eqref{eq15k1} of this proposition, for $m \in \NN$, $e^{\beta x_3}\chi(x_3) u^i_{j,s}$ $\in$ $H^{m}(\Omega_+,\CC^3)$ (recall that for $m \in \NN$, $u^i_{j,s} \in H^m(\omega_h,\CC^3)$), consequently $\chi(x_3) u^i_{j,s}$ $\in$ $W^{m}_{\beta,\Sigma}(\Omega_+,\CC^3)$. 
Let $\kappa$ be the sum of the algebraic multiplicities of the eigenvalues of $\mathscr{L}(\nu)$ (strictly) between the lines $Re \nu = - \beta$ and $Re \nu = - \gamma$.
With the notation \eqref{eq15k1} of Proposition \ref{pro2sol}, (a), denote by $\{W_k\}_{k=1,\ldots,\kappa}$ the family $\{\chi(x_3) u_{j,s}^i\}_{i=1,\ldots, N,j=1,\ldots,J_i, s=0,\ldots,\kappa_{i,j} -1}$ (these functions will be named "waves").
Let ${\cal D}_W$ be the vector space spanned by these $\kappa$ functions. Lemma \ref{indep} implies that the functions $W_k, k =1,\ldots,\kappa$ are linearly independent modulo $W^{1}_{\gamma}(\Omega_+,\CC^3)$ and linearly independent, then the sum ${\cal D}_W$ $+$ $W^{1}_{\gamma}(\Omega_+,\CC^3)$ is direct and ${\cal D}_W$ is $\kappa$-dimensional.
Consider the space ${\cal D}$ = ${\cal D}_W$ $\oplus$ $W^{1}_{\gamma}(\Omega_+,\CC^3)$ ($W^{1}_{\gamma}(\Omega_+,\CC^3)$ $\subset$ ${\cal D}$ $\subset$ $W^{1}_{\beta}(\Omega_+,\CC^3)$). 
The space ${\cal D}$ equipped with the norm: for $u= u_W + u_{\gamma}$ $\in {\cal D}$ with $u_W = \sum_{k=1}^{\kappa} a_k W_k$, $a_1,\ldots,a_{\kappa} \in \CC$ and $u_{\gamma} \in W^{1}_{\gamma}(\Omega_+,\CC^3)$,  
\begin{equation}
\label{eq15n12c2}
\displaystyle{||u||^2_{{\cal D}} = \sum_{k=1}^{\kappa} |a_k|^2 + ||u_{\gamma}||^2_{\gamma,1,\Omega_+}}
\end{equation}
is a Hilbert space for the inner product associated with this norm.
%
In the same way the sum ${\cal D}_W$ $+$ $W^{1}_{\gamma,\Sigma}(\Omega_+,\CC^3)$ is direct and the space ${\cal D}_{\Sigma}$ =   ${\cal D}_W$ $\oplus$ $W^{1}_{\gamma,\Sigma}(\Omega_+,\CC^3)$ ($W^{1}_{\gamma,\Sigma}(\Omega_+,\CC^3)$ $\subset$ ${\cal D}_{\Sigma}$ $\subset$ $W^{1}_{\beta,\Sigma}(\Omega_+,\CC^3)$) is a closed subspace of ${\cal D}$.

Since $\beta < \gamma$,  ${W}^1_{\gamma,\Sigma}(\Omega_+,\CC^3)$ $\subset$ ${W}^1_{\beta,\Sigma}(\Omega_+,\CC^3)$ and ${W}^1_{-\beta,\Sigma}(\Omega_+,\CC^3)$ $\subset$ ${W}^1_{-\gamma,\Sigma}(\Omega_+,\CC^3)$. As ${W}^1_{-\beta,\Sigma}(\Omega_+,\CC^3)$ is dense in ${W}^1_{-\gamma,\Sigma}(\Omega_+,\CC^3)$, the adjoint of the inclusion map between these two spaces,  which is the restriction map
\begin{equation}
\label{eq300}
i_{\beta,\gamma}: u \in ({W}^1_{-\gamma,\Sigma}(\Omega_+,\CC^3))^* \mapsto u\vert_{{W}^1_{-\beta,\Sigma}(\Omega_+,\CC^3)} \in ({W}^1_{-\beta,\Sigma}(\Omega_+,\CC^3))^*
\end{equation}
is an embedding (one-to-one, continuous and with a dense range), see \cite{Zeidler}, p.265.
The following proposition, which is fundamental, is a consequence of Proposition \ref{pro2sol} and will be applied several times in the sequel.
\begin{proposition}
\label{asymt}
Assume that $\mathscr{L}(\nu)$ has no eigenvalue on the lines $Re \nu = - \beta$ and $Re \nu = - \gamma$ and let $f \in$ $({W}^1_{-\gamma,\Sigma}(\Omega_+,\CC^3))^*$.
Assume that $u_{\beta} \in {W}^1_{\beta,\Sigma}(\Omega_+,\CC^3)$ satisfies $B_{\Omega_+,\beta} u_{\beta} =i_{\beta,\gamma}f$ = $f\vert_{{W}^1_{-\beta,\Sigma}(\Omega_+,\CC^3)}$.
In that case there exists a family of complex numbers $\{c_k\}_{k=1,\ldots,\kappa}$ satisfying
\begin{equation}
\label{eq15l2}
u_{\beta} =  \sum_{k=1}^{\kappa} c_k W_k + u_{\gamma}
\end{equation}
where $u_{\gamma} \in {W}^1_{\gamma,\Sigma}(\Omega_+,\CC^3)$, in other words $u_{\beta} \in {\cal D}_{\Sigma}$.
\end{proposition}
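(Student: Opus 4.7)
The plan is to lift the half-strip problem to the full strip $\Omega$ via the cutoff $\chi$ so as to invoke Proposition \ref{pro2sol}(b). Since $u_{\beta}$ vanishes on $\Sigma$, extend it by zero across $\Sigma$ to $\tilde u_{\beta} \in W^{1}_{\beta}(\Omega,\CC^3)$, and set $w := \chi(x_3)\tilde u_{\beta} \in W^{1}_{\beta}(\Omega,\CC^3)$, which is supported in $\{x_3 \geq 1\}$. The key property of $\chi$ is that it vanishes on a neighborhood of $\Sigma$; hence, for any $v \in W^{1}_{-\beta}(\Omega,\CC^3)$ (respectively $v \in W^{1}_{-\gamma}(\Omega,\CC^3)$), the function $\chi v|_{\Omega_+}$ lies in $W^{1}_{-\beta,\Sigma}(\Omega_+,\CC^3)$ (respectively in $W^{1}_{-\gamma,\Sigma}(\Omega_+,\CC^3)$), using $\beta < \gamma$ and the boundedness of $\chi$ on $\{x_3 \geq 1\}$.

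For $v \in W^{1}_{-\beta}(\Omega,\CC^3)$, Lemma \ref{lemma_gd} and the support of $w$ give
\[
\langle B_{\Omega,\beta} w, v\rangle = b_{\Omega_+}(u_{\beta},\chi \overline{v}|_{\Omega_+}) + c_{\Omega_+}(\chi')(u_{\beta},\overline{v}|_{\Omega_+}) = \langle f, \chi v|_{\Omega_+}\rangle + c_{\Omega_+}(\chi')(u_{\beta},\overline{v}|_{\Omega_+}),
\]
using the hypothesis $B_{\Omega_+,\beta} u_{\beta} = i_{\beta,\gamma}f$ in the second equality. Define $F_{\gamma} \in (W^{1}_{-\gamma}(\Omega,\CC^3))^*$ by the same formula $\langle F_{\gamma}, v\rangle := \langle f, \chi v|_{\Omega_+}\rangle + c_{\Omega_+}(\chi')(u_{\beta},\overline{v}|_{\Omega_+})$ for $v \in W^{1}_{-\gamma}(\Omega,\CC^3)$; both terms are well-defined (the first by the observation in the previous paragraph; the second because $\chi'$ is compactly supported in $x_3$, making the weighted norms equivalent to unweighted on its support). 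By construction, $B_{\Omega,\beta} w$ and $F_{\gamma}$ coincide on $W^{1}_{-\beta}(\Omega,\CC^3) \cap W^{1}_{-\gamma}(\Omega,\CC^3)$.

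Under our assumption on the eigenvalues of $\mathscr{L}(\nu)$, Theorem \ref{thisom} makes $B_{\Omega,\gamma}$ an isomorphism, so $u_{\gamma}' := B_{\Omega,\gamma}^{-1} F_{\gamma} \in W^{1}_{\gamma}(\Omega,\CC^3)$. Proposition \ref{pro2sol}(b) applied with the pair $(B_{\Omega,\beta} w,\, F_{\gamma})$ then yields
\[
w = u_{\gamma}' + \sum_{i,j,s} c^{i}_{j,s}\, u^{i}_{j,s} \quad \text{a.e.\ in } \Omega
\]
for some constants $c^{i}_{j,s}$. Relabelling the family $\{\chi u^{i}_{j,s}\}$ as $\{W_k\}_{k=1,\ldots,\kappa}$ with corresponding coefficients $\{c_k\}$, define $u_{\gamma} := u_{\beta} - \sum_{k} c_k W_k$ on $\Omega_+$. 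The Dirichlet condition $u_{\gamma}|_{\Sigma} = 0$ is immediate since both $u_{\beta}|_{\Sigma} = 0$ and $\chi(0) = 0$. To see that $u_{\gamma} \in W^{1}_{\gamma}(\Omega_+,\CC^3)$, split $\Omega_+$ into $\{0 \leq x_3 \leq 2\}$, on which $u_{\gamma}$ is in $H^1$ on a bounded $x_3$-range and hence trivially in $W^{1}_{\gamma}$, and $\{x_3 \geq 2\}$, on which $\chi = 1$ so that $u_{\gamma} = w|_{\Omega_+} - \sum_{i,j,s} c^{i}_{j,s} u^{i}_{j,s}|_{\Omega_+} = u_{\gamma}'|_{\Omega_+}$, which lies in $W^{1}_{\gamma}$. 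The main obstacle is the construction in the second paragraph: one must verify that the cutoff-produced functional extends consistently to both weighted spaces so that Proposition \ref{pro2sol}(b) applies, and the two features that enable this are the localization of $\chi$ away from $\Sigma$ (which promotes the Dirichlet trace condition from the half-strip to the full strip) and the compact $x_3$-support of $\chi'$ (which renders the commutator term automatically compatible with any weight).
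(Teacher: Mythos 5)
Your proof is correct and follows essentially the same route as the paper: cut off with $\chi$ to transfer the problem to the full strip, observe that the resulting right-hand side $\langle f,\chi v\rangle + c_{\Omega_+}(\chi')(u_\beta,\overline v)$ extends continuously to both $W^1_{-\beta}(\Omega,\CC^3)$ and $W^1_{-\gamma}(\Omega,\CC^3)$ with agreement on the intersection, apply Proposition \ref{pro2sol}(b), and reassemble. The only (cosmetic) differences are that the paper uses an auxiliary cutoff $\chi_1$ where you argue directly that $\chi v|_{\Omega_+}$ has zero trace, and that the paper's final splitting $u_\beta=\chi(\chi u_\beta)+(1-\chi^2)u_\beta$ is replaced by your equivalent domain-splitting verification that $u_\beta-\sum_k c_kW_k\in W^1_{\gamma,\Sigma}(\Omega_+,\CC^3)$.
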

%
\begin{proof}
The relation $B_{\Omega_+,\beta} u_{\beta} = f\vert_{{W}^1_{-\beta,\Sigma}(\Omega_+,\CC^3)}$ means
\begin{equation}
\label{eq15l3}
b_{\Omega_+}(u_{\beta},\overline{v}) = \langle f, v \rangle,\,v \in W^1_{-\beta,\Sigma}(\Omega_+,\CC^3).
\end{equation}
It follows that $\chi(x_3) u_{\beta} \in {W}^1_{\beta}(\Omega,\CC^3)$ and due to \eqref{eq24d}, that if $v \in {W}^1_{-\beta,\Sigma}(\Omega_+,\CC^3)$,
\begin{equation}
\label{eq45c1}
b_{\Omega_+}(\chi(x_3) u_{\beta}, \overline{v}) = b_{\Omega_+}(u_{\beta}, \chi(x_3) \overline{v}) + c_{\Omega_+}(\chi')(u_{\beta}, \overline{v}) = \langle f, \chi(x_3) v\rangle + c_{\Omega_+}(\chi')(u_{\beta}, \overline{v}).
\end{equation}
Let $\chi_1 \in {\cal C}^{\infty}(\RR,\RR)$ be such that $\chi_1(x) =0$ for $x \leq 1/2$, $\chi_1(x) =1$ for $x \geq 1$ so that $\chi_1 \chi \equiv \chi$. Now if $v \in {W}^1_{-\beta}(\Omega,\CC^3)$ then $\chi_1(x_3) v \in {W}^1_{-\beta,\Sigma}(\Omega,\CC^3)$, $b_{\Omega_+}(\chi(x_3) u_{\beta}, \overline{\chi_1(x_3) v})$ = $b_{\Omega}(\chi(x_3) u_{\beta}, \overline{v})$, $c_{\Omega_+}(\chi')(u_{\beta}, \overline{\chi_1(x_3) v})$ = $c_{\Omega_+}(\chi')(u_{\beta}, \overline{v})$ so that \eqref{eq45c1} yields
\begin{equation}
\label{eq45d1}
b_{\Omega}(\chi(x_3) u_{\beta}, \overline{v}) = \langle f, \chi(x_3) v\rangle + c_{\Omega_+}(\chi')(u_{\beta}, \overline{v}) , \, v \in {W}^1_{-\beta}(\Omega
,\CC^3).
\end{equation}
The second member in \eqref{eq45d1} is continuous with respect to $v \in {W}^1_{-\beta}(\Omega,\CC^3)$ therefore if $v \in  {W}^1_{-\beta}(\Omega,\CC^3)$ it can be written under the form $\langle f_{\beta}, v \rangle$ where $f_{\beta} \in ({W}^1_{-\beta}(\Omega,\CC^3))^*$. Moreover it is continuous with respect to $v \in {W}^1_{-\gamma}(\Omega,\CC^3)$ consequently if $v \in  {W}^1_{-\gamma}(\Omega,\CC^3)$ it can be written under the form $\langle f_{\gamma}, v \rangle$ where $f_{\gamma} \in ({W}^1_{-\gamma}(\Omega,\CC^3))^*$ and $f_{\beta} = f_{\gamma}$ on ${W}^1_{-\beta}(\Omega,\CC^3) \cap {W}^1_{-\gamma}(\Omega,\CC^3)$.
Eq. \eqref{eq45d1} means $B_{\Omega,\beta} (\chi(x_3) u_{\beta}) = f_{\beta}$. By writing $u_{\beta}$ under the form $u_{\beta} =  \chi(x_3) (\chi(x_3) u_{\beta}) + (1- \chi^2(x_3))u_{\beta}$ and applying  Proposition \ref{pro2sol}, (b) we get \eqref{eq15l2}.
\end{proof}

The next simple lemma will be used several times in the sequel to extend some variational formulations to a larger space.
\begin{lemma}
\label{noyau1}
%
(a) If $u \in W^{1}_{\gamma,\Sigma}(\Omega_+,\CC^3)$, then $B_{\Omega_+,\beta} u = i_{\beta,\gamma}B_{\Omega_+,\gamma} u$ = 
$B_{\Omega_+,\gamma} u\vert_{W^{1}_{-\beta,\Sigma}(\Omega_+,\CC^3)}$.

(b) Assume that $f \in (W^{1}_{-\gamma,\Sigma}(\Omega_+,\CC^3))^*$ and $u \in W^{1}_{\gamma,\Sigma}(\Omega_+,\CC^3)$ satisfy $B_{\Omega_+,\beta} u = i_{\beta,\gamma}f$ = $f\vert_{W^{1}_{-\beta,\Sigma}(\Omega_+,\CC^3)}$, that is to say
\begin{equation}
\label{eq1000a}
b_{\Omega_+}(u,v) = \langle f,v\rangle, \, v \in W^{1}_{-\beta,\Sigma}(\Omega_+,\CC^3).
\end{equation}
Then
\begin{equation}
\label{eq1001a}
b_{\Omega_+}(u,v) = \langle f,v\rangle, \, v \in W^{1}_{-\gamma,\Sigma}(\Omega_+,\CC^3),
\end{equation}
namely $B_{\Omega_+,\gamma} u = f$.
In particular $W^{1}_{\gamma,\Sigma}(\Omega_+,\CC^3) \cap KerB_{\Omega_+, \beta}$  = $KerB_{\Omega_+, \gamma}$.
\end{lemma}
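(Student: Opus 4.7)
The proof is essentially a bookkeeping exercise built around two structural facts available from the preceding material: the inclusions $W^{1}_{\gamma,\Sigma}(\Omega_+,\CC^3)\subset W^{1}_{\beta,\Sigma}(\Omega_+,\CC^3)$ and $W^{1}_{-\beta,\Sigma}(\Omega_+,\CC^3)\subset W^{1}_{-\gamma,\Sigma}(\Omega_+,\CC^3)$ (both continuous, since $\beta<\gamma$), and the fact stated just above Proposition \ref{asymt} that $W^{1}_{-\beta,\Sigma}(\Omega_+,\CC^3)$ is dense in $W^{1}_{-\gamma,\Sigma}(\Omega_+,\CC^3)$. I do not anticipate any real obstacle; the only point requiring care is to notice where the stronger hypothesis $u\in W^{1}_{\gamma,\Sigma}$ (rather than just $u\in W^{1}_{\beta,\Sigma}$) is actually used.

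For part (a), I would simply unpack the two definitions. Since $u\in W^{1}_{\gamma,\Sigma}(\Omega_+,\CC^3)$ is in particular an element of $W^{1}_{\beta,\Sigma}(\Omega_+,\CC^3)$, both $B_{\Omega_+,\beta}u$ and $B_{\Omega_+,\gamma}u$ are defined via the same formula \eqref{eq8a}, namely $b_{\Omega_+}(u,\overline{v})$, the only difference being the space from which $v$ is drawn. For $v\in W^{1}_{-\beta,\Sigma}(\Omega_+,\CC^3)\subset W^{1}_{-\gamma,\Sigma}(\Omega_+,\CC^3)$, the two values agree, which is exactly the identity $B_{\Omega_+,\beta}u=i_{\beta,\gamma}B_{\Omega_+,\gamma}u$ in view of the definition \eqref{eq300}.

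For part (b), the plan is to extend the identity \eqref{eq1000a} by continuity and density. Under the assumption $u\in W^{1}_{\gamma,\Sigma}(\Omega_+,\CC^3)$ the continuity estimate \eqref{eq7a} applied with $\gamma$ in place of $\beta$ yields
\begin{equation*}
|b_{\Omega_+}(u,v)|\leq C\,\|u\|_{\gamma,1,\Omega_+}\|v\|_{-\gamma,1,\Omega_+},\qquad v\in W^{1}_{-\gamma,\Sigma}(\Omega_+,\CC^3),
\end{equation*}
so the left-hand side of \eqref{eq1000a} is continuous on $W^{1}_{-\gamma,\Sigma}(\Omega_+,\CC^3)$; the right-hand side $\langle f,v\rangle$ is continuous on the same space since $f\in (W^{1}_{-\gamma,\Sigma}(\Omega_+,\CC^3))^*$. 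The identity \eqref{eq1000a} thus holds on the dense subspace $W^{1}_{-\beta,\Sigma}(\Omega_+,\CC^3)$ and extends by continuity to all of $W^{1}_{-\gamma,\Sigma}(\Omega_+,\CC^3)$, giving \eqref{eq1001a}, i.e.\ $B_{\Omega_+,\gamma}u=f$.

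Finally, the "in particular" statement is an immediate consequence. The inclusion $\ker B_{\Omega_+,\gamma}\subset W^{1}_{\gamma,\Sigma}(\Omega_+,\CC^3)\cap \ker B_{\Omega_+,\beta}$ follows from part (a) (restricting a zero functional remains zero, and the domain is already contained in $W^{1}_{\gamma,\Sigma}$); the reverse inclusion is the case $f=0$ of part (b).
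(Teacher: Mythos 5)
Your proof is correct and follows exactly the route the paper takes: the author dismisses (a) as trivial (it is just the unpacking of the two definitions you give) and proves (b) by the same density-plus-continuity argument, using the density of $W^{1}_{-\beta,\Sigma}(\Omega_+,\CC^3)$ in $W^{1}_{-\gamma,\Sigma}(\Omega_+,\CC^3)$. You have merely written out the details the paper leaves implicit, including the correct observation that the hypothesis $u\in W^{1}_{\gamma,\Sigma}(\Omega_+,\CC^3)$ is what makes $v\mapsto b_{\Omega_+}(u,v)$ continuous on the larger test space.
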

%
\begin{proof}
Part (a) is trivial and part (b) is simply a consequence of the density of $W^{1}_{-\beta,\Sigma}(\Omega_+,\CC^3)$ in $W^{1}_{-\gamma,\Sigma}(\Omega_+,\CC^3)$.
%
\end{proof}

As in \eqref{eq15f}, \eqref{eq15g}, thanks to Green formula for an open set with Lipschitz boundary, it results that for $u \in W^2_{\beta}(\Omega_+,\CC^3)$ and $v \in W^1_{-\beta}(\Omega_+,\CC^3)$,
\begin{equation}
\label{eq15f1a}
b_{\Omega_+}(u,{v})= {b}^1_{\Omega_+}(u,{v})
\end{equation}
where 
\begin{equation}
\label{eq15f1b}
{b}^1_{\Omega_+}(u,{v})= \displaystyle{\int_{\Omega_+} ({\cal A}(\partial_{x_1}, \partial_{x_3}) u) \cdot {{v}} + \int_{\partial \Omega_+}({\cal B}(\partial_{x_1}, \partial_{x_3}) u) \cdot {{v}}},
\end{equation}
\begin{equation}
\label{eq15g1a}
({\cal A}(\partial_{x_1}, \partial_{x_3}) u)_i= - \partial_j\sigma_{ij}(u),\, ({\cal B}(\partial_{x_1}, \partial_{x_3}) u)_i= \sigma_{ij}(u) n_j,\, i=1,\ldots,3
\end{equation}
and where in \eqref{eq15f1b}, \eqref{eq15g1a}, $\cdot$ is the Hermitian product in $\CC^3$, $n_j$, $j=1,\ldots,3$, are the components of the outward normal to $\partial \Omega_+$.

The next lemma shows that the form ${b}^1_{\Omega_+}$ can be extended to other spaces.
\begin{lemma}
\label{lemext}
If $\theta \in \RR$, one can extend ${b}^1_{\Omega_+}$ (still denoted ${b}^1_{\Omega_+}$) on ${\cal D}_W \times W^1_{\theta}(\Omega_+,\CC^3)$  by formula \eqref{eq15f1b} so that for all $k=1,\ldots,\kappa$, there exists $C_{k,\theta} >0$ satisfying
\begin{equation}
\label{eq15f1a2}
|{b}^1_{\Omega_+}(W_k,v)| \leq C_{k,\theta} ||v||_{\theta,1,\Omega_+},\,v \in W^1_{\theta}(\Omega_+,\CC^3).
\end{equation}
%
%
\end{lemma}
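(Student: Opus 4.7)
The plan is to exploit the fact that each wave $W_k = \chi(x_3) u^i_{j,s}$ coincides with the pure power-exponential eigenfunction $u^i_{j,s}$ for $x_3 \geq 2$ (where $\chi \equiv 1$) and vanishes identically for $x_3 \leq 1$ (where $\chi \equiv 0$), so that the differential expressions $\mathcal{A}(\partial_{x_1},\partial_{x_3}) W_k$ and $\mathcal{B}(\partial_{x_1},\partial_{x_3}) W_k$ have \emph{compact support} inside $\overline{\Omega_+}$. Once this is established, the integrals in \eqref{eq15f1b} make sense against any $v \in W^1_\theta(\Omega_+,\CC^3)$, because over a bounded region the weight $e^{\theta x_3}$ is bounded above and below by positive constants.

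First I would make the compact-support claim precise. In the region $x_3 \leq 1$ one has $W_k \equiv 0$, hence $\mathcal{A}(W_k)=0$ in the interior and $\mathcal{B}(W_k)=0$ on $\partial\Omega_+ \cap \{x_3 \leq 1\}$, which covers $\Sigma$ entirely. In the region $x_3 \geq 2$ one has $W_k = u^i_{j,s}$; since $u^i_{j,s}$ belongs to $\mathcal{N}(\mathscr{L}(\partial_x),\nu_i)$ and, by the remark following formula \eqref{eq15h}, the polynomial operator pencil $\mathscr{L}(\nu)$ is identified on $H^2(\omega_h,\CC^3)$ with the pencil $(\mathcal{A}(\partial_{x_1},\nu),\mathcal{B}(\partial_{x_1},\nu))$, the function $u^i_{j,s}$ satisfies $\mathcal{A}(\partial_{x_1},\partial_{x_3})u^i_{j,s} = 0$ in $\omega_h \times \RR$ and $\mathcal{B}(\partial_{x_1},\partial_{x_3})u^i_{j,s}=0$ at $x_1 = \pm h$. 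Therefore both $\mathcal{A}(W_k)$ and $\mathcal{B}(W_k)$ are supported in the compact strip $K := \omega_h \times [1,2]$, and since $u^i_{j,s}$ is smooth up to the boundary (cf. the remark after \eqref{eq15h} that eigenvectors lie in $\mathcal{C}^\infty(\overline{\omega_h},\CC^3)$), so is $W_k$ on $K$. In particular $\mathcal{A}(W_k) \in L^2(K,\CC^3)$ and $\mathcal{B}(W_k) \in L^2(\partial K \cap \partial\Omega_+,\CC^3)$, with norms bounded by a constant $C_k$ depending only on $W_k$.

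Then I would estimate both pieces of formula \eqref{eq15f1b}. For the volume term,
\begin{equation*}
\Bigl|\int_{\Omega_+}(\mathcal{A}(\partial_{x_1},\partial_{x_3})W_k)\cdot v\Bigr| = \Bigl|\int_{K}(\mathcal{A}(\partial_{x_1},\partial_{x_3})W_k)\cdot v\Bigr| \leq C_k \lVert v\rVert_{0,K} \leq C_k e^{2|\theta|}\lVert v\rVert_{\theta,0,\Omega_+}.
\end{equation*}
For the boundary term, I would use that the trace $v\vert_{\partial K \cap \partial\Omega_+}$ is controlled by $\lVert v\rVert_{1,K}$ via the standard trace inequality on the bounded Lipschitz domain $K$, and on $K$ the weighted norm $\lVert \cdot \rVert_{\theta,1,\Omega_+}$ and the unweighted norm $\lVert \cdot \rVert_{1,K}$ are equivalent. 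This yields the analogous estimate for the boundary integral. Summing the two bounds produces a constant $C_{k,\theta}>0$ with $|{b}^1_{\Omega_+}(W_k,v)| \leq C_{k,\theta}\lVert v\rVert_{\theta,1,\Omega_+}$ for every $v\in W^1_\theta(\Omega_+,\CC^3)$, which is \eqref{eq15f1a2}; by linearity the form extends to all of $\mathcal{D}_W \times W^1_\theta(\Omega_+,\CC^3)$.

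There is no real obstacle here; the only subtlety is the identification, via \eqref{eq15h}, of $u^i_{j,s}$ as a classical solution of the homogeneous elastic system together with the traction-free boundary condition at $x_1=\pm h$, which is what forces $\mathcal{A}(W_k)$ and $\mathcal{B}(W_k)$ to be compactly supported. Everything else is a direct Cauchy–Schwarz and trace estimate on the bounded transition region $K$.
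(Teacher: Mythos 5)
Your proposal is correct and follows essentially the same route as the paper: both arguments reduce to the observation that $\mathcal{A}(\partial_{x_1},\partial_{x_3})W_k$ and $\mathcal{B}(\partial_{x_1},\partial_{x_3})W_k$ are supported in the compact transition region $\overline{\omega_h}\times[1,2]$, because $W_k$ vanishes for $x_3\leq 1$ and coincides for $x_3\geq 2$ with the power-exponential solution $u^i_{j,s}$, which annihilates both $\mathcal{A}$ and the traction operator $\mathcal{B}$ at $x_1=\pm h$ via the identification \eqref{eq15h}. The paper phrases this as "$\mathcal{A}(\chi u^i_{j,s})=\chi\,\mathcal{A}(u^i_{j,s})+\text{terms in }\partial_x\chi$" and leaves the final Cauchy--Schwarz and trace estimates implicit, which you have simply written out in full.
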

%
\begin{proof}
%
%
%
Formula \eqref{eq15f1a} can be applied to $u$ = $\chi(x_3)u^i_{j,s}$ $\in$  $W^2_{\beta,\Sigma}(\Omega_+,\CC^3)$. On the other hand
${\cal A}(\partial_{x_1}, \partial_{x_3}) (\chi(x_3)u^i_{j,s})= \chi(x_3) {\cal A}(\partial_{x_1}, \partial_{x_3}) u^i_{j,s}$
+ terms in $\partial_x \chi$, 
${\cal A}(\partial_{x_1}, \partial_{x_3}) u^i_{j,s}\equiv 0$, 
with similar properties for ${\cal B}(\partial_{x_1}, \partial_{x_3}) (\chi(x_3)u^i_{j,s})$
and $\partial_x \chi$ has a compact support. 
Consequently for $u$ = $\chi(x_3)u^i_{j,s}$ $\in$  $W^2_{\beta,\Sigma}(\Omega_+,\CC^3)$, if $\theta \in \RR$ and $v \in W^1_{\theta}(\Omega_+,\CC^3)$, the second member of \eqref{eq15f1b} makes sense and 
we shall define ${b}^1_{\Omega_+}(\chi(x_3)u^i_{j,s}, v)$ by formula \eqref{eq15f1b}. We obtain \eqref{eq15f1a2} and we can extend ${b}^1_{\Omega_+}$ on ${\cal D}_W \times W^1_{\theta}(\Omega_+,\CC^3)$ by linearity.
\end{proof}
\section{Radiation conditions in the half-strip $\Omega_+$.}
\label{Radiation}
\setcounter{equation}{0}
In this section we introduce the complex symplectic form $q_{\Omega_+}$ (Eq. \eqref{eq15n11}). Thanks to this form it is possible to define incoming and outgoing "waves" according to the Mandelstam radiation principle.
In the sequel we shall make the following assumption
\begin{assumption}
\label{ass1}
$\delta >0$ is chosen such that in the strip $-\delta < \mbox{Re} \nu < \delta$, the eigenvalues of the operator pencil ${\mathscr{L}}(\nu)$ are only on the imaginary axis $\mbox{Re} \nu  =0$ and $\mathscr{L}(\nu)$ has no eigenvalue on the lines $Re \nu = - \delta$ and $Re \nu =  \delta$. 
\end{assumption}

Let $\nu_1 = i \omega_1$, $\ldots$, $\nu_{N} = i \omega_N$ be these eigenvalues where $\omega_j \in \RR, j =1,\ldots,N$.
Given that $B_{\Omega_+,\delta}$ and $B_{\Omega_+,-\delta}$ are Fredholm operators (Proposition \ref{propfredholm}), $KerB_{\Omega_+,-\delta}$ and $KerB_{\Omega_+,\delta}$ $\subset$ $KerB_{\Omega_+,-\delta}$ are finite-dimensional. Let $T$ be the codimension of  $KerB_{\Omega_+,\delta}$ in $KerB_{\Omega_+,-\delta}$.
Let $\kappa$ be the sum of the algebraic multiplicities of the imaginary eigenvalues $i \omega_j$, $j=1,\ldots,N$ of the operator pencil ${\mathscr{L}}(\nu)$.

Owing to Proposition \ref{ker*}
\begin{equation}
\label{eq43a}
dim (KerB_{\Omega_+,-\delta}/KerB_{\Omega_+,\delta})+dim (KerB_{\Omega_+,\delta}^*/KerB_{\Omega_+,-\delta}^*) = \kappa.
\end{equation}
Given that $B_{\Omega_+,\delta}^*$ = $B_{\Omega_+,-\delta}$ and $B_{\Omega_+,-\delta}^*$ = $B_{\Omega_+,\delta}$ it follows that $\kappa=2T$.
The results of section \ref{Asymptotics} can be applied with $\beta = -\delta$, $\gamma = \delta$ and $\kappa = 2T$.

In that case ${\cal D}_W$ is the vector space spanned by the $2T$ functions $W_1,\ldots, W_{2T}$. The functions $W_k, k =1,\ldots,2T$ are linearly independent modulo $W^{1}_{\delta}(\Omega_+,\CC^3)$ and linearly independent, the sum $W^{1}_{\delta}(\Omega_+,\CC^3)$ $+$ ${\cal D}_W$ is direct and ${\cal D}_W$ is $2T$-dimensional.
The space ${\cal D}$ = $W^{1}_{\delta}(\Omega_+,\CC^3)$ $\oplus$ ${\cal D}_W$ ($W^{1}_{\delta}(\Omega_+,\CC^3)$ $\subset$ ${\cal D}$ $\subset$ $W^{1}_{-\delta}(\Omega_+,\CC^3)$) is equipped with the norm \eqref{eq15n12c2} where $\kappa =2T$ and $\gamma = \delta$.
From Lemma \ref{lemext} it is possible to extend the form $b^1_{\Omega_+}$ (still denoted $b^1_{\Omega_+}$) on the space ${\cal D}_W\times W^{1}_{-\delta}(\Omega_+,\CC^3)$ and thus to extend the form $b_{\Omega_+}$ (denoted $\tilde{b}_{\Omega_+}$) on the space ${\cal D}\times W^{1}_{-\delta}(\Omega_+,\CC^3)$ (in particular on the space ${\cal D}\times {\cal D}$) by: for $u= u_W + u_{\delta} \in {\cal D}$, with $u_W \in {\cal D}_W$, $u_{\delta} \in W^{1}_{\delta}(\Omega_+,\CC^3)$, for $v \in W^{1}_{-\delta}(\Omega_+,\CC^3)$
\begin{equation}
\label{eq43b}
\tilde{b}_{\Omega_+}(u,v)= b^1_{\Omega_+}(u_W,v) + b_{\Omega_+}(u_{\delta},v)
\end{equation}
and from Lemma \ref{lemext} there exists $C >0$ satisfying
\begin{equation}
\label{eq15n12d1}
|\tilde{b}_{\Omega_+}(u,v)| \leq C ||u||_{{\cal D}} ||v||_{-\delta,1,\Omega_+},\,u \in {\cal D}, v \in W^1_{-\delta}(\Omega_+,\CC^3).
\end{equation}
From \eqref{eq15f1a} for $u_W \in {\cal D}_W$ and $v \in W^{1}_{\delta}(\Omega_+,\CC^3)$, $b^1_{\Omega_+}(u_W,{v})= {b}_{\Omega_+}(u_W,{v})$ so that 
for $u \in {\cal D}$ and $v \in W^{1}_{\delta}(\Omega_+,\CC^3)$, $\tilde{b}_{\Omega_+}(u,{v})= {b}_{\Omega_+}(u,{v})$.

Moreover the sum $W^{1}_{\delta,\Sigma}(\Omega_+,\CC^3)$ $+$ ${\cal D}_W$ is direct and the space ${\cal D}_{\Sigma}$ = $W^{1}_{\delta,\Sigma}(\Omega_+,\CC^3)$ $\oplus$ ${\cal D}_W$ ($W^{1}_{\delta,\Sigma}(\Omega_+,\CC^3)$ $\subset$ ${\cal D}_{\Sigma}$ $\subset$ $W^{1}_{-\delta,\Sigma}(\Omega_+,\CC^3)$) is a closed subspace of ${\cal D}$.
From \eqref{eq15n12d1} it is possible to extend $B_{\Omega_+,\delta}$ (denoted $\tilde{B}_{\Omega_+,\delta}$) on ${\cal D}_{\Sigma}$ by:
%
%
\begin{equation}
\label{eq2010}
<\tilde{B}_{\Omega_+,\delta}u,v> = \tilde{b}_{\Omega_+}(u,\overline{v}),\,u \in {\cal D}_{\Sigma},\,v \in W^1_{-\delta,\Sigma}(\Omega_+,\CC^3),
\end{equation}
so that if $u \in {\cal D}_{\Sigma}$, $\tilde{B}_{\Omega_+,\delta}u$ $\in$ $(W^1_{-\delta,\Sigma}(\Omega_+,\CC^3))^*$ and there exists $C >0$ satisfying
\begin{equation}
\label{eq15n12d2}
||\tilde{B}_{\Omega_+,\delta}u||_{-\delta,\Sigma,1,\Omega_+,*} \leq C ||u||_{{\cal D}},\,u \in {\cal D}_{\Sigma}.
\end{equation}
Now define the sesquilinear form $q_{\Omega_+}$ on ${\cal D} \times {\cal D}$ by 
\begin{equation}
\label{eq15n11}
q_{\Omega_+}(u,v)= \tilde{b}_{\Omega_+}(u,v) - \overline{\tilde{b}_{\Omega_+}(v,u)}, \, u,v \in {\cal D}.
\end{equation}
Obviously if $u \in W^{1}_{\delta}(\Omega_+,\CC^3)$ or $v \in W^{1}_{\delta}(\Omega_+,\CC^3)$ then $q_{\Omega_+}(u,v)=0$. Consequently if $u$ and $v$ $\in {\cal D}$ are written under the form $u= u_W+ u_{\delta}$, $v= v_W+ v_{\delta}$ where $u_W$, $v_W$ $\in {\cal D}_W$, $u_{\delta}$, $v_{\delta}$ $\in W^{1}_{\delta}(\Omega_+,\CC^3)$, then
$q_{\Omega_+}(u,v)= q_{\Omega_+}(u_W,v_W)$.
The next lemma will be used several times in the sequel to extend some variational formulations to a larger space. It is to be compared with Lemma \ref{noyau1}.
\begin{lemma}
\label{noyau}
Assume that $f \in (W^{1}_{-\delta,\Sigma}(\Omega_+,\CC^3))^*$ and $u \in {\cal D}_{\Sigma}$ satisfy $B_{\Omega_+,-\delta} u = f\vert_{W^{1}_{\delta,\Sigma}(\Omega_+,\CC^3)}$, that is 
\begin{equation}
\label{eq1000}
b_{\Omega_+}(u,v) = \langle f,v\rangle, \, v \in W^{1}_{\delta,\Sigma}(\Omega_+,\CC^3).
\end{equation}
Then
\begin{equation}
\label{eq1001}
\tilde{b}_{\Omega_+}(u,v) = \langle f,v\rangle, \, v \in W^{1}_{-\delta,\Sigma}(\Omega_+,\CC^3),
\end{equation}
that is to say $\tilde{B}_{\Omega_+,\delta} u = f$.
In particular if $u \in {\cal D}_{\Sigma} \cap KerB_{\Omega_+, -\delta}$ then if $v \in W^{1}_{-\delta,\Sigma}(\Omega_+,\CC^3)$, $\tilde{b}_{\Omega_+}(u,v)=0$  consequently $\tilde{b}_{\Omega_+}(u,u)=0$ and $q_{\Omega_+}(u,u) =0$.
\end{lemma}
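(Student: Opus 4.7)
The strategy is a density/continuity argument, extending the identity from the narrow test space $W^{1}_{\delta,\Sigma}(\Omega_+,\CC^3)$ to the wider $W^{1}_{-\delta,\Sigma}(\Omega_+,\CC^3)$. First I would rewrite the hypothesis \eqref{eq1000} in terms of $\tilde b_{\Omega_+}$: since the text just recorded that $\tilde b_{\Omega_+}(u,v)=b_{\Omega_+}(u,v)$ whenever $v\in W^{1}_{\delta}(\Omega_+,\CC^3)$, \eqref{eq1000} is equivalent to $\tilde b_{\Omega_+}(u,v)=\langle f,v\rangle$ for every $v\in W^{1}_{\delta,\Sigma}(\Omega_+,\CC^3)$. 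Both sides of this reformulated identity extend continuously to the larger space $W^{1}_{-\delta,\Sigma}(\Omega_+,\CC^3)$: the left side by \eqref{eq15n12d1} (with the fixed $u\in{\cal D}_{\Sigma}$), the right side by the very hypothesis $f\in (W^{1}_{-\delta,\Sigma}(\Omega_+,\CC^3))^*$. Hence it suffices to establish that $W^{1}_{\delta,\Sigma}(\Omega_+,\CC^3)$ is dense in $W^{1}_{-\delta,\Sigma}(\Omega_+,\CC^3)$.

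To obtain this density I would truncate in $x_3$. Pick $\eta\in {\cal C}^{\infty}(\RR,\RR)$ with $\eta\equiv 1$ on $[0,1]$, $\eta\equiv 0$ on $[2,+\infty)$, $0\leq\eta\leq 1$, and set $\eta_n(x_3)=\eta(x_3/n)$. For $v\in W^{1}_{-\delta,\Sigma}(\Omega_+,\CC^3)$ the product $\eta_n v$ has compact support in $\overline{\Omega_+}$, so it lies in $W^{1}_{\delta}(\Omega_+,\CC^3)$; since $\eta_n(0)=1$ and $v\vert_{\Sigma}=0$, we also have $(\eta_n v)\vert_{\Sigma}=0$, hence $\eta_n v\in W^{1}_{\delta,\Sigma}(\Omega_+,\CC^3)$. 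Convergence $\eta_n v\to v$ in $W^{1}_{-\delta}(\Omega_+,\CC^3)$ follows from dominated convergence (applied with $|1-\eta_n|\leq 1$, $1-\eta_n\to 0$ pointwise, and the $L^1$ dominants $e^{-2\delta x_3}|v|^2$, $e^{-2\delta x_3}|\nabla v|^2$) together with the commutator bound $\|\eta_n'\|_{\infty}\leq\|\eta'\|_{\infty}/n\to 0$, which controls the extra term $\eta_n' v$ produced by $\nabla(\eta_n v)$.

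With density in hand I would pass to the limit in $\tilde b_{\Omega_+}(u,\eta_n v)=\langle f,\eta_n v\rangle$ to obtain \eqref{eq1001}, that is, $\tilde B_{\Omega_+,\delta}u=f$. For the particular case, I would apply what has just been proved with $f=0$: any $u\in {\cal D}_{\Sigma}\cap Ker B_{\Omega_+,-\delta}$ satisfies $\tilde b_{\Omega_+}(u,v)=0$ for every $v\in W^{1}_{-\delta,\Sigma}(\Omega_+,\CC^3)$; taking in particular $v=u\in {\cal D}_{\Sigma}\subset W^{1}_{-\delta,\Sigma}(\Omega_+,\CC^3)$ gives $\tilde b_{\Omega_+}(u,u)=0$, whence
\[
q_{\Omega_+}(u,u)=\tilde b_{\Omega_+}(u,u)-\overline{\tilde b_{\Omega_+}(u,u)}=0.
\]
The only step I anticipate as even mildly subtle is the density argument, because in this paper $W^{k}_{\beta,\Sigma}$ is defined as a closed subspace of $W^{k}_{\beta}$ rather than as the completion of some class of compactly supported test functions, so the approximation has to be produced by an explicit cutoff rather than read off the definition.
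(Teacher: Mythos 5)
Your proposal is correct and follows essentially the same route as the paper, whose proof is a one-line appeal to the continuity bound \eqref{eq15n12d1}, the hypothesis \eqref{eq1000}, and the density of $W^{1}_{\delta,\Sigma}(\Omega_+,\CC^3)$ in $W^{1}_{-\delta,\Sigma}(\Omega_+,\CC^3)$. The only difference is that you supply an explicit cutoff argument for that density, which the paper asserts without proof; your truncation and dominated-convergence reasoning for it is sound.
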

%
\begin{proof}
It is simply a consequence of \eqref{eq15n12d1}, \eqref{eq1000} and the density of $W^{1}_{\delta,\Sigma}(\Omega_+,\CC^3)$ in $W^{1}_{-\delta,\Sigma}(\Omega_+,\CC^3)$.
%
\end{proof}

The following proposition shows that the form $q_{\Omega_+}$ is nondegenerate. Its proof uses the same method as the proof of Theorem 2.1, p.50 of \cite{Nazarov-Plamenevsky} (roughly speaking a Taylor formula for $\mathscr{L}(\partial_x)$ around an eigenvalue).
\begin{proposition}
\label{prosympl}
Let $\nu_1 = i \omega_1$, $\ldots$, $\nu_{N} = i \omega_N$ be the eigenvalues of ${\mathscr{L}}(\nu)$ on the imaginary axis.
With the notations \eqref{eq15k1} and \eqref{eq15m} of Proposition \ref{pro2sol}, we have
\begin{equation}
\label{eq15n13}
q_{\Omega_+}(\chi(x_3) u_{j,s}^k, \chi(x_3) v_{j', s'}^{k'}) = \delta_{k,k'} \delta_{j,j'} \delta_{s+s'= \kappa_{k,j} -1}.
\end{equation}
This shows that the form $q_{\Omega_+}$ is nondegenerate.
\end{proposition}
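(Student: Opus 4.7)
The plan is to reduce $q_{\Omega_+}(\chi u^k_{j,s}, \chi v^{k'}_{j',s'})$ to a flux integral over a horizontal cross-section $\{x_3 = R\}$ for large $R$, and then to match that flux, via a Taylor expansion of $\mathscr{L}(\partial_x)$ around the eigenvalue $\nu_k = i\omega_k$, with the Keldysh biorthogonality relation that uniquely characterizes the dual canonical system $\{\psi^k_{j,s}\}$ in Theorem \ref{thkeldysh}.

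First, since $q_{\Omega_+}$ vanishes as soon as one of its arguments lies in $W^1_{\delta}(\Omega_+,\CC^3)$, only the ${\cal D}_W$-components contribute. Expanding $\tilde{b}_{\Omega_+}(\chi u^k_{j,s}, \chi v^{k'}_{j',s'})$ via the Green-formula representation \eqref{eq15f1b}, the contribution over $\{x_1 = \pm h\}$ vanishes: indeed, the identification \eqref{eq15h} combined with the Jordan-chain relations for $\mathscr{L}(\nu)$ implies ${\cal B}(\partial_{x_1}, \partial_{x_3}) u^k_{j,s}|_{x_1 = \pm h} = 0$, and multiplication by $\chi(x_3)$ does not affect $\sigma_{i1}$ on this side. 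The contribution at $\{x_3 = 0\}$ is zero because $\chi(0) = 0$. What remains is a bulk integral supported where $\chi'$ or $\chi''$ is nonzero, namely in $\{1 \le x_3 \le 2\}$, since ${\cal A}(\partial_{x_1}, \partial_{x_3}) u^k_{j,s} \equiv 0$ and likewise for $v^{k'}_{j',s'}$.

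Next, antihermitian-symmetrizing $q_{\Omega_+}(u,v) = \tilde{b}_{\Omega_+}(u,v) - \overline{\tilde{b}_{\Omega_+}(v,u)}$ and integrating by parts once more in $x_3$, the $\chi'$- and $\chi''$-supported bulk terms telescope into a single cross-sectional flux: for any $R$ with $\chi \equiv 1$ on $\{x_3 \ge R\}$,
\begin{equation*}
q_{\Omega_+}(\chi u^k_{j,s}, \chi v^{k'}_{j',s'}) = \int_{\omega_h} \bigl[ \sigma_{i3}(u^k_{j,s}) \overline{(v^{k'}_{j',s'})_i} - (u^k_{j,s})_i \overline{\sigma_{i3}(v^{k'}_{j',s'})} \bigr]_{x_3=R} dx_1,
\end{equation*}
and the right-hand side is independent of $R$, because both $u^k_{j,s}$ and $v^{k'}_{j',s'}$ satisfy ${\cal A}w = 0$ in $\Omega$ together with ${\cal B}w|_{x_1 = \pm h} = 0$ (apply \eqref{eq15f1a} on a slab $\omega_h \times (R_1, R_2)$). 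Finally, substituting the power-exponential forms \eqref{eq15k1} and \eqref{eq15m} into this flux and Taylor-expanding $\langle \mathscr{L}(\nu)\varphi,\psi\rangle$ around $\nu = \nu_k$ in the spirit of Nazarov-Plamenevsky, Thm. 2.1 p. 50, the expression collapses to a finite sum of pairings $\frac{1}{r!}\langle \mathscr{L}^{(r)}(\nu_k) \varphi^k_{j,\sigma}, \overline{\psi^{k'}_{j',\sigma'}}\rangle$. These pairings are exactly the quantities Theorem \ref{thkeldysh} uses to normalize the dual canonical system $\{\psi^k_{j,s}\}$, so the value is $\delta_{k,k'}\delta_{j,j'}\delta_{s+s' = \kappa_{k,j}-1}$. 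Since each $\nu_k$ is purely imaginary, $-\overline{\nu_k} = \nu_k$, so $\{\chi v^k_{j,s}\}$ is another basis of the $2T$-dimensional space ${\cal D}_W$; the matrix $[q_{\Omega_+}(\chi u^k_{j,s}, \chi v^{k'}_{j',s'})]$ is then block anti-diagonal and invertible, which yields nondegeneracy of $q_{\Omega_+}$ on ${\cal D}_W$ and hence on ${\cal D}$.

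The main obstacle will be step two: showing cleanly that the $\chi'$- and $\chi''$-contributions assemble, after the antihermitian symmetrization, into the stated cross-sectional flux; and then in step three matching the resulting algebraic expression---keeping careful track of the reflection $v^k_{j,s}(x)=w^k_{j,s}(-x)$ and of the conjugations built into $q_{\Omega_+}$---with the precise Keldysh normalization convention fixed by Theorem \ref{thkeldysh}.
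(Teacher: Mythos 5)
Your proposal is correct and follows essentially the same route as the paper: reduce $q_{\Omega_+}$ on the waves to an $R$-independent cross-sectional pairing at $x_3=R$ (your flux integral is, up to sign convention, the paper's Eq. \eqref{eq1030}), Taylor-expand $\mathscr{L}(\partial_x)$ about $\nu_k$ and integrate by parts so that the antihermitian symmetrization leaves only the boundary terms at $x_3=R$, and identify the surviving constant term with the Keldysh biorthogonality relations \eqref{eq212} of Theorem \ref{thkeldysh}. The only caveats are bookkeeping ones you already flag yourself: the lateral-boundary terms do not vanish identically (multiplying by $\chi$ produces $\chi'$-proportional contributions to $\sigma_{i1}$ supported in $1\le x_3\le 2$), and the overall sign of the flux must be tracked against the conjugations in $q_{\Omega_+}$.
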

%
\begin{proof}
For $i=1,\ldots,N$, $-\overline{\nu_i} = \nu_i$, therefore the set of "power-exponential" functions $v^i_{j,s}$ given in \eqref{eq15m} (as well as the "power-exponential" functions $u^i_{j,s}$ given in \eqref{eq15k1})
for $j=1,\ldots,J_i, s=0, \ldots,\kappa_{i,j} -1$ form a basis of ${\cal N}({\mathscr{L}}(\partial_x), \nu_i)$ = ${\cal N}({\mathscr{L}}(\partial_x), -\overline{\nu_i})$ (Proposition \ref{pro2sol}, (a)).
Let $\nu_k$ and $\nu_{k'}$, $1 \leq k,k' \leq N$ be two eigenvalues of ${\mathscr{L}}(\nu)$ on the imaginary axis, $u_{j,s}^k$, $v_{j', s'}^{k'}$ be power-exponential functions corresponding to $\nu_k$ and $\nu_{k'}$ given by \eqref{eq15k1} and \eqref{eq15m} and set $u = \chi(x_3) u_{j,s}^k$, $v= \chi(x_3) v_{j', s'}^{k'}$. If $R>0$ set $\Omega_{R}= \{ (x_1,x_3) \in \Omega_+, \, x_3 < R \}$ = $(-h,h) \times (0,R)$ and $d \Omega_{R}= \{ (x_1,x_3) \in \partial \Omega_+, \, x_3 < R \}$. Since $\partial_x \chi$ has a compact support  $\subset [1,2]$, it follows that for $R\geq 2$, $\tilde{b}_{\Omega_+}(u,v)$ = ${b}^1_{\Omega_+}(u,v)$ = ${b}^1_{\Omega_+,R}(u,v)$ where
\begin{equation}
\label{eq15f2}
{b}^1_{\Omega_+,R}(u,v)= \displaystyle{\int_{\Omega_{R}} ({\cal A}(\partial_{x_1}, \partial_{x_3}) u) \cdot {{v}} + \int_{d \Omega_{R}}({\cal B}(\partial_{x_1}, \partial_{x_3}) u) \cdot {{v}}}
\end{equation}
(with the notation \eqref{eq15g1a}) and that ${b}^1_{\Omega_+}(u,v)$ = $\underset{R \rightarrow + \infty}{\mbox{lim}} {b}^1_{\Omega_+,R}(u,v)$.
Owing to \eqref{eq15h}, \eqref{eq104d} and Green formula
\begin{eqnarray}
\label{eq15f3}
&&\displaystyle{{b}^1_{\Omega_+,R}(u,v)}= \displaystyle{\int_{0}^{R} 
\langle  \mathscr{L}(\partial_x)u(., x),{v}(.,x)\rangle_2 dx}= \nonumber\\
&&\displaystyle{\sum_{n=0}^2 \int_{0}^{R} 
\langle  \frac{\partial^n_x \mathscr{L}(\nu_k)}{n!} (\partial_x - \nu_k)^n u(., x),{v}(.,x)\rangle_2 dx}=\nonumber\\
&&\displaystyle{\sum_{n=0}^2 (-1)^n \int_{0}^{R} 
\langle  \frac{\partial^n_x \mathscr{L}(\nu_k)}{n!}  u(., x),({\partial_x - \nu_k)^n v}(.,x)\rangle_2 dx}+\nonumber\\
&&\displaystyle{\sum_{n=1}^2 
\sum_{\substack{p \in \NN, p' \in \NN,\\ p+p'=n-1}}(-1)^{p'}
\langle  \frac{\partial^n_x \mathscr{L}(\nu_k)}{n!}  (\partial_x- \nu_k)^p u(., R),{(\partial_x-\nu_k)^{p'} v}(.,R)\rangle_2}.
\end{eqnarray}
\eqref{eq104e2} implies 
\begin{equation}
\label{eq104e3}
\langle \partial_x^n{\mathscr L}(\nu_k) \varphi, \psi \rangle_2 =(-1)^n\overline{\langle \partial_x^n{\mathscr L}(\nu_k) \psi, \varphi \rangle_2},\, n\in \NN ,\, \varphi \in H^1(\omega_h, \CC^3), \psi \in H^1(\omega_h, \CC^3)
\end{equation}
therefore
\begin{eqnarray}
\label{eq15f4}
&&\displaystyle{\sum_{n=0}^2 (-1)^n \int_{0}^{R} 
\langle  \frac{\partial^n_x \mathscr{L}(\nu_k)}{n!}u(., x), {(\partial_x-\nu_k)^nv}(.,x) \rangle_2 dx}=\nonumber\\
&&\displaystyle{\sum_{n=0}^2 \int_{0}^{R} 
\overline{\langle  \frac{\partial^n_x \mathscr{L}(\nu_k)}{n!} {(\partial_x-\nu_k)^nv}(.,x), u(., x)\rangle_2} dx}=\nonumber\\
&&\displaystyle{\int_{0}^{R} 
\overline{\langle  \mathscr{L}(\partial_x)v(., x),{u}(.,x)\rangle_2} dx} =
\displaystyle{\overline{{b}^1_{\Omega_+,R}(v,u)}}.
\end{eqnarray}
On the other hand for $R \geq 2$, 
\begin{eqnarray}
\label{eq15f5}
&&\displaystyle{\sum_{n=1}^2 
\sum_{\substack{p \in \NN, p' \in \NN,\\ p+p'=n-1}}(-1)^{p'}
\langle  \frac{\partial^n_x \mathscr{L}(\nu_k)}{n!}  (\partial_x-\nu_k)^{p} u(., R),{(\partial_x-\nu_k)^{p'} v}(.,R)\rangle_2}=
\nonumber\\
&&\displaystyle{\sum_{n=1}^2 \sum_{\substack{p \in \NN, p' \in \NN,\\ p+p'=n-1}}(-1)^{p'}
\langle  \frac{\partial^n_x \mathscr{L}(\nu_k)}{n!}  (\partial_x-\nu_k)^p 
(e^{\nu_k x} \sum_{\sigma=0}^s \frac{x^{\sigma}}{\sigma !} \varphi_{j,s - \sigma}^k)(.,R)},
\nonumber\\
&&\displaystyle{(\partial_x-\nu_k)^{p'} 
(e^{\nu_{k'} x} \sum_{\sigma'=0}^{s'} \frac{(-x)^{\sigma'}}{\sigma' !} 
\psi_{j',s' - \sigma'}^{k'})(.,R)\rangle_2}=
\nonumber\\
&&\displaystyle{\sum_{n=1}^2 \sum_{\substack{p \in \NN, p' \in \NN,\\ p+p'=n-1}}(-1)^{p'} e^{(\nu_k - \nu_{k'}) R}
\langle  \frac{\partial^n_x \mathscr{L}(\nu_k)}{n!}
\partial_x^p 
(\sum_{\sigma=0}^s \frac{x^{\sigma}}{\sigma !} \varphi_{j,s - \sigma}^k)(.,R)},
\nonumber\\
&&\displaystyle{(\partial_x - \nu_k + \nu_{k'})^{p'} 
(\sum_{\sigma'=0}^{s'} \frac{(-x)^{\sigma'}}{\sigma' !} 
\psi_{j',s' - \sigma'}^{k'})(.,R)\rangle_2}.
\end{eqnarray}
Since the term in \eqref{eq15f5} has a finite limit when $R \rightarrow +\infty$ this limit is necessarily
\begin{eqnarray}
\label{eq15f6}
&&\displaystyle{\delta_{k,k'} \sum_{n=1}^2 \sum_{\substack{p \in \NN, p' \in \NN,\\ p+p'=n-1}}
\langle  \frac{\partial^n_x \mathscr{L}(\nu_k)}{n!}
\varphi_{j,s - p}^k},
\displaystyle{ 
\psi_{j',s' - p'}^{k'}\rangle_2}
\end{eqnarray}
and from \eqref{eq212}  this limits is $\delta_{k,k'} \delta_{j,j'} \delta_{s+s'= \kappa_{k,j} -1}$.
%
\end{proof}
\begin{remark}
\label{nondegenerate}
According to \cite{Nazarov-Plamenevsky}, Theorem 2.1, pp.148-156 (whose proof is rather long, about eight pages), and arguing as in the proof of Proposition \ref{prosympl}, for $k =1,\ldots, N$ there exists a canonical system of Jordan chains of ${\cal L}(\nu)$ corresponding to $\nu_k$, $\{\varphi_{j,s}^k\}_{j=1,\ldots,J_k, s=0,\ldots,\kappa_{k,j} -1}$ satisfying (see \cite{Nazarov-Plamenevsky}, p.157)
\begin{equation}
\label{eq15n12}
q_{\Omega_+}(\chi(x_3) u^k_{j,s}, \chi(x_3) u^{k'}_{j',s'})= \pm i\delta_{k,k'} \delta_{j,j'} \delta_{s+s' = \kappa_{k,j}-1}.
\end{equation}
This result is finer than Proposition \ref{prosympl}, but Proposition \ref{prosympl} is sufficient for our purpose.
There is another proof of \cite{Nazarov-Plamenevsky}, Theorem 2.1, pp.148-156 in \cite{Moller}, Theorem 3.1.
\end{remark}

The form $q_{\Omega_+}$ is sesquilinear and if $u,v \in {\cal D}$, $q_{\Omega_+}(v,u)$ = $-\overline {q_{\Omega_+}(u,v)}$ consequently $i q_{\Omega_+}$ is Hermitian. Moreover $q_{\Omega_+}$ is nondegenerate, thus it is symplectic (that is to say by definition sesquilinear, antihermitian and nondegenerate, see \cite{Everitt}, Definition 1) and consequently there exists a basis $V_1,\ldots, V_{2T}$ of ${\cal D}_W$ with the property that $i q_{\Omega_+}(V_j, V_k) = \pm \delta_{j,k}$.

From Eq. \eqref{eq15f2} and Green formula, it is easily seen that for $R \geq 2$
\begin{equation}
\label{eq1030}
q_{\Omega_+}(u,v) = \int_{\omega_h \times \{R\}} \sigma_{i3}(\overline{v}) {u}_i - \sigma_{i3}(u) \overline{v}_i, \, u,v \in {\cal D}_{W},
\end{equation}
where the second member of \eqref{eq1030} is independent of $R\geq 2$. The expression of $q_{\Omega_+}$ given by \eqref{eq1030} is exactly the opposite of the expression given by Eq. (3.5) of \cite{Nazarov}. By the Mandelstam radiation principle (see \cite{Nazarov}, section 3.1), the direction of wave propagation of $u$ $\in {\cal D}_W$ is determined by the sign of the projection onto the $x_3$-axis of the time-average on a period of the integral of the Umov-Poynting vector of the energy flux over a cross-section $\omega_h \times \{R\}$. This sign is the same as that of $i q_{\Omega_+}(u,u)$ (see \cite{Nazarov}, Eq. (3.4)). According to the Mandelstam radiation principle, $u \in {\cal D}_{W}$ is said to be outgoing (resp. incoming) if this sign is $>0$, i.e. $i q_{\Omega_+}(u,u)>0$ (resp. $<0$, i.e. $i q_{\Omega_+}(u,u)<0$).

The following lemma corresponds to Theorem 3.2, p.157 and Proposition 3.3, p.158 of \cite{Nazarov-Plamenevsky} in the present context. Its proof is very similar to the proofs of the aforementioned results and is given here for the sake of completeness.
\begin{lemma}
\label{scatt}
(a) There exists a basis $U_1,\ldots,U_{2T}$ of ${\cal D}_W$ satisfying
\begin{equation}
\label{eq15n12a}
q_{\Omega_+}(U_j,U_k)=  0 \mbox{ for } j\neq k,
\end{equation}
\begin{equation}
\label{eq15n12b}
q_{\Omega_+}(U_j,U_j)=  -i,\, q_{\Omega_+}(U_{j+T},U_{j+T})=  i \mbox{ for } j = 1,\ldots,T.
\end{equation}
$U_1,\ldots,U_{T}$ are outgoing and $U_{T+1},\ldots,U_{2T}$ are incoming.

(b) There exists a basis $\{\zeta_1,\ldots,\zeta_T\}$ and a basis $\{\eta_1,\ldots,\eta_T\}$ of a complement of $KerB_{\Omega_+,\delta}$ in $KerB_{\Omega_+,-\delta}$ such that for $i=1,\ldots,T$, 
\begin{equation}
\label{eq44}
\zeta_i - U_i -\sum_{k=1}^T t_{ik} U_{k+T} \in W^1_{\delta,\Sigma}(\Omega_+,\CC^3),
\end{equation}
\begin{equation}
\label{eq45}
\eta_i - U_{i+T} - \sum_{k=1}^T s_{ik} U_{k} \in W^1_{\delta,\Sigma}(\Omega_+,\CC^3),
\end{equation}
where $t_{ik}$, $s_{ik}$ $\in \CC$ ($i, k =1,\ldots,T$),  therefore $\zeta_i$, $\eta_i$ $\in {\cal D}_{\Sigma}$ ($i=1,\ldots,T$).
\end{lemma}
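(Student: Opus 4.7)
The strategy is to package $\mathrm{Ker}\,B_{\Omega_+,-\delta}/\mathrm{Ker}\,B_{\Omega_+,\delta}$ as a maximal isotropic subspace of $({\cal D}_W, q_{\Omega_+})$ and then deploy finite-dimensional symplectic linear algebra. By Proposition \ref{asymt} (applied with $\beta=-\delta$, $\gamma=\delta$, $f=0$), every $u \in \mathrm{Ker}\,B_{\Omega_+,-\delta}$ lies in ${\cal D}_\Sigma$, so it admits a unique decomposition $u = u_W + u_\delta$ with $u_W \in {\cal D}_W$ and $u_\delta \in W^1_{\delta,\Sigma}(\Omega_+,\CC^3)$. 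Let $\pi: {\cal D}_\Sigma \to {\cal D}_W$ denote the projection $u \mapsto u_W$. Lemma \ref{noyau} gives $\tilde b_{\Omega_+}(u,v)=0$ for every such $u$ and every $v \in W^1_{-\delta,\Sigma}(\Omega_+,\CC^3)$, so $q_{\Omega_+}$ vanishes identically on $\mathrm{Ker}\,B_{\Omega_+,-\delta} \times \mathrm{Ker}\,B_{\Omega_+,-\delta}$; in particular $L := \pi(\mathrm{Ker}\,B_{\Omega_+,-\delta})$ is isotropic in $({\cal D}_W, q_{\Omega_+})$. The restriction of $\pi$ to $\mathrm{Ker}\,B_{\Omega_+,-\delta}$ has kernel $W^1_{\delta,\Sigma}(\Omega_+,\CC^3) \cap \mathrm{Ker}\,B_{\Omega_+,-\delta} = \mathrm{Ker}\,B_{\Omega_+,\delta}$ by Lemma \ref{noyau1}(b), so $\dim L = T$.

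For part (a), $iq_{\Omega_+}$ is a nondegenerate Hermitian form on the $2T$-dimensional space ${\cal D}_W$ (by Proposition \ref{prosympl}), and the existence of an isotropic subspace of dimension $T$ forces its signature to be exactly $(T,T)$: if $p,q$ denote the positive and negative indices, then $p+q=2T$ and $\min(p,q) \ge T$ by Sylvester's law of inertia, hence $p=q=T$. Diagonalising $iq_{\Omega_+}$ yields a basis $U_1,\ldots,U_{2T}$ of ${\cal D}_W$ on which $iq_{\Omega_+}$ takes the values $+1$ on $U_1,\ldots,U_T$ and $-1$ on $U_{T+1},\ldots,U_{2T}$, which is precisely \eqref{eq15n12a}-\eqref{eq15n12b}; the classification into outgoing and incoming waves then follows immediately from the sign convention of the Mandelstam principle recalled just above the lemma.

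For part (b), the key observation is that the projections of $L$ onto $\mathrm{span}(U_1,\ldots,U_T)$ and onto $\mathrm{span}(U_{T+1},\ldots,U_{2T})$ are both linear isomorphisms. Writing $\pi(u) = \sum_{j=1}^T a_j U_j + \sum_{k=1}^T b_k U_{k+T}$ for $u \in \mathrm{Ker}\,B_{\Omega_+,-\delta}$, the isotropy condition $q_{\Omega_+}(\pi(u),\pi(u))=0$ combined with \eqref{eq15n12b} reads $\sum_j |a_j|^2 = \sum_k |b_k|^2$; vanishing of either family of coefficients forces the other, hence $\pi(u)=0$, hence $u \in \mathrm{Ker}\,B_{\Omega_+,\delta}$. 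Each projection is therefore an injection between $T$-dimensional spaces, hence bijective. Lifting the standard basis of $\CC^T$ through the outgoing projection produces $\zeta_i \in \mathrm{Ker}\,B_{\Omega_+,-\delta}$ with $\pi(\zeta_i) = U_i + \sum_{k=1}^T t_{ik} U_{k+T}$, which is \eqref{eq44}; the $\zeta_i$ are linearly independent modulo $\mathrm{Ker}\,B_{\Omega_+,\delta}$ by construction, so they span a complement. The same argument applied to the incoming projection yields the $\eta_i$ satisfying \eqref{eq45}.

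The only real obstacle is the identification of $\mathrm{Ker}\,B_{\Omega_+,-\delta}/\mathrm{Ker}\,B_{\Omega_+,\delta}$ with a maximal isotropic subspace of $({\cal D}_W, q_{\Omega_+})$ of dimension $T$; this rests on Proposition \ref{asymt} for the asymptotic decomposition, Lemma \ref{noyau} to upgrade the vanishing of $B_{\Omega_+,-\delta}u$ into vanishing of $\tilde b_{\Omega_+}(u,\cdot)$ on the larger test space $W^1_{-\delta,\Sigma}(\Omega_+,\CC^3)$, and the numerical identity $\kappa=2T$ established just before Proposition \ref{asymt} from \eqref{eq43a}. Once this is in place, both parts of the lemma reduce to routine symplectic linear algebra over $\CC$.
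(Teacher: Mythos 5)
Your proposal is correct and follows essentially the same route as the paper: both rest on Proposition \ref{asymt} with $f=0$ to place $\mathrm{Ker}\,B_{\Omega_+,-\delta}$ inside ${\cal D}_{\Sigma}$, Lemma \ref{noyau} to make its image in ${\cal D}_W$ totally isotropic, and Lemma \ref{noyau1}(b) to identify that image with the $T$-dimensional quotient $\mathrm{Ker}\,B_{\Omega_+,-\delta}/\mathrm{Ker}\,B_{\Omega_+,\delta}$, after which the $\zeta_i,\eta_i$ are obtained by inverting the outgoing and incoming coefficient matrices. The only difference is presentational: you package the counting step as the standard signature bound for a Hermitian form admitting a maximal totally isotropic subspace, whereas the paper reaches $T^+=T^-=T$ by showing the matrices $C^{\pm}$ have full rank --- the same argument in concrete matrix form.
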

%
\begin{proof}
(a) Let $Z_1,\ldots,Z_T$ be a basis of a complement of $KerB_{\Omega_+,\delta}$ in $KerB_{\Omega_+,-\delta}$. 
Let us denote by $V^+_j$, $j =1,\ldots T^+$ (resp. $V^-_j$, $j =1,\ldots T^-$) the familly of $V_i$ satisfying $i q_{\Omega_+}(V^+_j,V^+_j) =1$ (resp. $i q_{\Omega_+}(V^-_j,V^-_j) = -1$), with $2T= T^+ + T^-$.
Proposition \ref{asymt} applied with $f=0$ shows that there exists a matrix $C^+ = (C^+_{ij})_{i=1,\ldots,T; j=1,\ldots, T^+}$ (resp. $C^- = (C^-_{ij})_{i=1,\ldots,T; j=1,\ldots, T^-}$) of complex numbers satisfying 
\begin{equation}
\label{eq1010}
Z_k - C^+_{kj} V^+_j - C^-_{kj} V^-_j\in W^1_{\delta,\Sigma}(\Omega_+,\CC^3),\, k=1,\ldots, T.
\end{equation}
Assume that $X \in \CC^T$ is such that $(C^+)^t X=0$ (where $(C^+)^t$ is the transpose matrix of $C^+$). Therefore $X_k Z_k - X_k C^-_{kj}V^-_j$ $\in W^1_{\delta,\Sigma}(\Omega_+,\CC^3)$ and $X_k Z_k \in {\cal D}_{\Sigma}\cap KerB_{\Omega_+,-\delta}$, and in view of Lemma \ref{noyau}, we get $q_{\Omega_+}(X_k Z_k,X_k Z_k) =0$ thus $q_{\Omega_+}(X_k C^-_{kj}V^-_j,X_k C^-_{kj}V^-_j)=0$ and $X_k C^-_{kj} =0$ ($j=1,\ldots,T^-$). Consequently $X_k Z_k \in W^1_{\delta,\Sigma}(\Omega_+,\CC^3)$ but given that $X_k Z_k \in KerB_{\Omega_+,-\delta}$, Lemma \ref{noyau1} implies that $X_k Z_k \in KerB_{\Omega_+,\delta}$ and $X_k=0$, $k=1,\ldots T$. This shows that necessarily $T^+ \geq T$. Likewise one shows that $ T^- \geq T$ and thus $T^+ = T^- =T$. Set $U_j= V_j^+$ for $j=1,\ldots, T$ and $U_{T+j}= V_j^-$ for $j=1,\ldots, T$. The result (a) follows. 

(b) Take $\zeta_i = ((C^+)^{-1})_{ik} Z_k$ and $\eta_i = ((C^-)^{-1})_{ik} Z_k$ where $Z_k$ satisfies Eq. \eqref{eq1010}.
%
\end{proof}

Consider ${\cal D}^{out}$ ($\subset {\cal D}$) the (topological) direct sum of $W^{1}_{\delta}(\Omega_+,\CC^3)$ and the vector space spanned by $U_1,\ldots,U_{T}$, equipped with the norm: for $u= \sum_{k=1}^T a_k U_k + u_{\delta}$  $\in {\cal D}^{out}$ with $a_1,\ldots,a_T \in \CC$ and $u_{\delta} \in W^{1}_{\delta}(\Omega_+,\CC^3)$,

\begin{equation}
\label{eq15n12c}
\displaystyle{||u||^2_{{\cal D}^{out}} = \sum_{k=1}^T |a_k|^2 + ||u_{\delta}||^2_{\delta,1,\Omega_+}}.
\end{equation}
${\cal D}^{out}$ is a Hilbert space for the inner product associated with this norm.
Thanks to \eqref{eq15n12d1} there exists $C >0$ satisfying
\begin{equation}
\label{eq15n12d}
|\tilde{b}_{\Omega_+}(u,v)| \leq C ||u||_{{\cal D}^{out}} ||v||_{-\delta,1,\Omega_+},\,u \in {\cal D}^{out}, v \in W^1_{-\delta}(\Omega_+,\CC^3).
\end{equation}
In the same way 
${\cal D}_{\Sigma}^{out}$ ($\subset {\cal D}_{\Sigma}$ and $\subset {\cal D}^{out}$) the (topological) direct sum of $W^{1}_{\delta,\Sigma}(\Omega_+,\CC^3)$ and the vector space spanned by $U_1,\ldots,U_{T}$,
is a closed subspace of ${\cal D}$.

%
Define the linear continuous map $B_{\Omega_+,\delta}^{out}$: ${\cal D}^{out}_{\Sigma}$ $\rightarrow$ $({W}^1_{-\delta,\Sigma}(\Omega_+,\CC^3))^*$ as $B_{\Omega_+,\delta}^{out}$ = $\tilde{B}_{\Omega_+,\delta}\vert_{{\cal D}^{out}_{\Sigma}}$ so that
\begin{equation}
\label{eq8a1}
<B_{\Omega_+,\delta}^{out}u,v> = \tilde{b}_{\Omega_+}(u,\overline{v}),\, u \in {\cal D}^{out}_{\Sigma}, v \in W^1_{-\delta,\Sigma}(\Omega_+,\CC^3)
\end{equation}
and
\begin{equation}
\label{eq9a1}
||B_{\Omega_+,\delta}^{out}u||_{-\delta,\Sigma,1,\Omega_+,*} \leq C ||u||_{{\cal D}^{out}}, \, u \in {\cal D}^{out}_{\Sigma}.
\end{equation}
\section{Well-posedness of the problem in the half-strip $\Omega_+$.}
\label{well-posedness}
\setcounter{equation}{0}
In this section we establish the well-posedness of the time-harmonic elasticity problem in a half-strip when the solution is searched in the space ${\cal D}^{out}$ (Theorem \ref{th-half-plane}).
In order to solve the problem in a half-strip with a non-homogeneous Dirichlet condition, the method consists in taking a trace lifting of this inhomogeneous Dirichlet condition and to reduce the problem to a problem with a homogeneous Dirichlet condition and a second member. The solution to this intermediate problem is given by the next theorem.
\begin{theorem}
\label{th-d-to-n} Under Assumption \ref{ass1}, the operator
$B_{\Omega_+,\delta}^{out}$: ${\cal D}^{out}_{\Sigma}$ $\rightarrow$ $({W}^1_{-\delta,\Sigma}(\Omega_+,\CC^3))^*$ is an isomorphism.
\end{theorem}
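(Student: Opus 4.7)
The plan is to prove bijectivity in two stages: injectivity via the symplectic form $q_{\Omega_+}$ plus a cited uniqueness result for the half-strip problem, and surjectivity via the Fredholm property of $B_{\Omega_+,-\delta}$ combined with the asymptotic decomposition of Proposition \ref{asymt} followed by a rearrangement that eliminates the incoming components.

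For injectivity, I would start from $u \in {\cal D}^{out}_{\Sigma}$ with $B_{\Omega_+,\delta}^{out} u = 0$, writing $u = \sum_{k=1}^{T} a_k U_k + u_{\delta}$ where $u_{\delta} \in W^1_{\delta,\Sigma}(\Omega_+,\CC^3)$. Testing the identity $\tilde b_{\Omega_+}(u,\overline{v}) = 0$ against $v \in W^1_{\delta,\Sigma}(\Omega_+,\CC^3) \subset W^1_{-\delta,\Sigma}(\Omega_+,\CC^3)$ (on which $\tilde b_{\Omega_+}$ coincides with $b_{\Omega_+}$) places $u$ in $KerB_{\Omega_+,-\delta}$, so Lemma \ref{noyau} yields $q_{\Omega_+}(u,u)=0$. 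Since $u_{\delta}$ does not contribute to $q_{\Omega_+}$ and since $i q_{\Omega_+}(U_k,U_k) = +1$ for each outgoing mode $k=1,\ldots,T$ by Lemma \ref{scatt}(a), this forces $\sum_{k=1}^{T} |a_k|^2 = 0$, hence $u = u_{\delta} \in KerB_{\Omega_+,\delta}$. The cited uniqueness results (\cite{Nazarov1}, Theorem 1 or \cite{Bouhennache}, Theorem 2.1) then give $u_{\delta} = 0$.

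For surjectivity, given $f \in (W^1_{-\delta,\Sigma}(\Omega_+,\CC^3))^*$, I would first observe that $B_{\Omega_+,-\delta}$ is Fredholm by Proposition \ref{propfredholm} and that its cokernel is isomorphic to $Ker(B_{\Omega_+,-\delta})^* = KerB_{\Omega_+,\delta}$, which is trivial by the uniqueness invoked above. Hence $B_{\Omega_+,-\delta}$ is surjective onto $(W^1_{\delta,\Sigma}(\Omega_+,\CC^3))^*$, and one may pick $v \in W^1_{-\delta,\Sigma}(\Omega_+,\CC^3)$ with $B_{\Omega_+,-\delta} v = f|_{W^1_{\delta,\Sigma}(\Omega_+,\CC^3)}$. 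Proposition \ref{asymt} applied with $\beta = -\delta$ and $\gamma = \delta$ gives $v \in {\cal D}_{\Sigma}$, so $v = \sum_{k=1}^{2T}\alpha_k U_k + v_{\delta}$, and Lemma \ref{noyau} promotes the identity to $\tilde B_{\Omega_+,\delta} v = f$. To land in ${\cal D}^{out}_{\Sigma}$ I would subtract the incoming part using the scattering basis of Lemma \ref{scatt}(b): set $u = v - \sum_{i=1}^{T} \alpha_{i+T}\,\eta_i$. The decomposition \eqref{eq45} shows that the $U_{k+T}$ coefficients of $u$ vanish, so $u \in {\cal D}^{out}_{\Sigma}$; moreover each $\eta_i \in KerB_{\Omega_+,-\delta}$ gives $\tilde B_{\Omega_+,\delta}\eta_i = 0$ by Lemma \ref{noyau}, and therefore $B_{\Omega_+,\delta}^{out} u = f$.

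The main obstacle is the triviality of $KerB_{\Omega_+,\delta}$, i.e., the absence of trapped exponentially decaying modes in the half-strip. This does not come out of the framework assembled so far (which supplies Fredholmness, the Mandelstam radiation structure through $q_{\Omega_+}$, and the asymptotic expansion of Proposition \ref{asymt}) and must be imported from the external uniqueness results cited in the introduction. Once this single input is available, the symplectic form cleanly disposes of the wave components of the kernel, and the scattering solutions $\eta_i$ furnish precisely the right basis of $KerB_{\Omega_+,-\delta}/KerB_{\Omega_+,\delta}$ to convert an arbitrary asymptotic decomposition into the outgoing-only form prescribed by ${\cal D}^{out}_{\Sigma}$.
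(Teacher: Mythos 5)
Your proposal is correct and follows essentially the same two-stage argument as the paper: injectivity via $q_{\Omega_+}(u,u)=0$ killing the outgoing coefficients plus the cited uniqueness results to dispose of $KerB_{\Omega_+,\delta}$, and surjectivity via the Fredholm/adjoint identity ${\cal R}(B_{\Omega_+,-\delta})=(KerB_{\Omega_+,\delta})^{\perp}$, Proposition \ref{asymt}, and subtraction of the incoming components using the $\eta_i$ of Lemma \ref{scatt}(b), finishing with Lemma \ref{noyau}. No gaps.
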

%
\begin{proof}
(a) According to \cite{Nazarov1}, Theorem 1 or \cite{Bouhennache}, Theorem 2.1, we know that $KerB_{\Omega_+,0} =\{0\}$, consequently $KerB_{\Omega_+,\delta} =\{0\}$. Let us examine $KerB_{\Omega_+,\delta}^{out}$. Assume that $u = \sum_{k=1}^T a_k U_k + u_{\delta} \in {\cal D}^{out}_{\Sigma}$ satisfies the condition $B_{\Omega_+,\delta}^{out} u=0$. Consequently $\tilde{b}_{\Omega_+}(u,u)=0$ and $q_{\Omega_+}(u,u)=0$. But $q_{\Omega_+}(u,u)= -i (\sum_{k=1}^T |a_k|^2)$ therefore $a_k=0$ ($k=1,\ldots, T$), $u = u_{\delta} \in {W}^1_{\delta,\Sigma}(\Omega_+,\CC^3)$ and $u \in KerB_{\Omega_+,\delta} =\{0\}$. We have shown that $KerB_{\Omega_+,\delta}^{out} =\{0\}$.

(b) $B_{\Omega+,-\delta}$ is a Fredholm operator, accordingly its range ${\cal R}(B_{\Omega+,-\delta})$ is closed and since $(B_{\Omega+,\delta})^*$ = $B_{\Omega+,-\delta}$, from  \cite{Kato}, Theorem 5.13, p.234, we get
\begin{equation}
\label{eq45a}
{\cal R}(B_{\Omega+,-\delta})= {\cal R}((B_{\Omega+,\delta})^*)= (Ker(B_{\Omega+,\delta}))^{\perp} = ({W}^1_{\delta,\Sigma}(\Omega_+,\CC^3))^*.
\end{equation}
Let $f \in ({W}^1_{-\delta,\Sigma}(\Omega_+,\CC^3))^*$. Hence $f\vert_{{W}^1_{\delta,\Sigma}(\Omega_+,\CC^3)} \in ({W}^1_{\delta,\Sigma}(\Omega_+,\CC^3))^*$ and due to \eqref{eq45a} there exists $u \in {W}^1_{-\delta,\Sigma}(\Omega_+,\CC^3)$ with the property that $B_{\Omega+,-\delta} u= f\vert_{{W}^1_{\delta,\Sigma}(\Omega_+,\CC^3)}$, namely
\begin{equation}
\label{eq45b}
b_{\Omega_+}(u, \overline{v}) = \langle f, v\rangle, \, v \in {W}^1_{\delta,\Sigma}(\Omega_+,\CC^3).
\end{equation}
Owing to Proposition \ref{asymt} and Lemma \ref{scatt}, (a), $u$ may be written under the form:
\begin{equation}
\label{eq45e}
u= \sum_{k=1}^{2T} a_k U_k + u_{\delta}
\end{equation}
where $a_1,\ldots,a_{2T} \in \CC$ and ${u}_{\delta} \in {W}^1_{\delta,\Sigma}(\Omega_+,\CC^3)$. Setting $u^{out} = u - \sum_{k=T+1}^{2T} a_k \eta_{k-T}$ where $\eta_k$ is given by \eqref{eq45} we obtain $u^{out} \in {\cal D}^{out}_{\Sigma}$ and $B_{\Omega_+,-\delta} u^{out} = f\vert_{{W}^1_{\delta,\Sigma}(\Omega_+,\CC^3)}$. But in view of Lemma \ref{noyau}, we obtain
 $B_{\Omega_+,\delta}^{out} u^{out} = f$.
\end{proof}
%

%
%
If $g \in H^{1/2}(\Sigma,\CC^3)$, let ${{\cal D}}^{out}_g$ be the set of $u \in {{\cal D}}^{out}$ such that $u\vert_{\Sigma}= g$: it is a closed (affine) subspace of ${{\cal D}}^{out}$ and ${{\cal D}}^{out}_0$ = ${\cal D}^{out}_{\Sigma}$.
%
%
If a solution to problem \eqref{eq2x1},\ldots,\eqref{eq4bx2} is searched in the space   
${{\cal D}}^{out}_g$, the corresponding variational formulation reads as follows: 
\begin{equation}
\label{eq45g1}
\mbox{Find }u \in {{\cal D}}^{out}_g\mbox{ such that for all } v \in W^1_{-\delta,\Sigma}(\Omega_+,\CC^3), \tilde{b}_{\Omega_+}(u,v) = 0.
\end{equation}
Remark first that a solution to \eqref{eq45g1} is necessarily unique because of part (a) in the proof of Theorem \ref{th-d-to-n}.
Now let us give $g \in H^{1/2}(\Sigma,\CC^3)$.
It is possible to find $u_0 \in H^1(\Omega_+,\CC^3)$ with compact support (so that $u_0 \in W^1_{\delta}(\Omega_+,\CC^3)$) in such a way that $u_0\vert_{\Sigma} = g$. 
Let us search a solution $u$ to \eqref{eq45g1} under the form $u =u_0 + w$ where $w \in {\cal D}_{\Sigma}^{out}$.
The variational formulation \eqref{eq45g1} boils down to the following one:
find  $w \in {\cal D}_{\Sigma}^{out}\mbox{ such that for all } v \in W^1_{-\delta,\Sigma}(\Omega_+,\CC^3),$
\begin{equation}
\label{eq45g}
\tilde{b}_{\Omega_+}(w,v) = -\tilde{b}_{\Omega_+}(u_0,v) =  -{b}_{\Omega_+}(u_0,v).
\end{equation}
The linear form $f$: $v \in W^1_{-\delta,\Sigma}(\Omega_+,\CC^3) \mapsto -b_{\Omega_+}(u_0,\overline{v})$ belongs to $(W^1_{-\delta,\Sigma}(\Omega_+,\CC^3))^*$ and \eqref{eq45g} reads as follows:
\begin{equation}
\label{eq45h}
B^{out}_{\Omega_+,\delta} w =f.
\end{equation}
Theorem \ref{th-d-to-n} shows that problem \eqref{eq45h} is well-posed. Now we state the main theorem of the paper.

\begin{theorem}
\label{th-half-plane}
Let $g \in H^{1/2}(\Sigma,\CC^3)$. Under Assumption \ref{ass1}, problem \eqref{eq45g1} has a unique solution $u$ $\in {{\cal D}}^{out}_g$ which is a weak solution to \eqref{eq2x1},\ldots,\eqref{eq4bx2}.
Moreover $u$ is written under the form $u=u_0+w$ where $u_0 \in W^1_{\delta}(\Omega_+, \CC^3)$ satisfies $u_{0}\vert_{\Sigma} = g$ and $w \in {\cal D}^{out}_{\Sigma}$ is written under the form
\begin{equation}
\label{eq45h1}
w= \sum_{k=1}^T a_k U_k + w_{\delta}
\end{equation}
where $a_k= -i b_{\Omega_+}(u_0,\zeta_k),\, k=1,\ldots,T$ and $w_{\delta} \in W^1_{\delta,\Sigma}(\Omega_+, \CC^3)$.
\end{theorem}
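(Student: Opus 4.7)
The reduction in the paragraphs just preceding the theorem does most of the work: pick $u_0 \in H^1(\Omega_+, \CC^3)$ with compact support satisfying $u_0|_\Sigma = g$ (so $u_0 \in W^1_{\delta}(\Omega_+, \CC^3)$), define $f \in (W^1_{-\delta,\Sigma}(\Omega_+,\CC^3))^*$ by $\langle f,v\rangle = -b_{\Omega_+}(u_0,\overline{v})$, and apply Theorem \ref{th-d-to-n} to obtain a unique $w \in {\cal D}^{out}_{\Sigma}$ with $B^{out}_{\Omega_+,\delta} w = f$; then $u := u_0+w \in {\cal D}^{out}_g$ solves \eqref{eq45g1}. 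For uniqueness of $u$ itself (not just of $w$ given $u_0$), if $u_1,u_2 \in {\cal D}^{out}_g$ are two solutions, then $u_1-u_2 \in {\cal D}^{out}_{\Sigma}$ satisfies $B^{out}_{\Omega_+,\delta}(u_1-u_2)=0$, hence $u_1=u_2$ by the injectivity proved in Theorem \ref{th-d-to-n}(a). That $u$ is then a weak solution of \eqref{eq2x1}--\eqref{eq4bx2} is automatic: any $v \in {\cal C}^{\infty}_0(\overline{\Omega}_+,\CC^3)$ with $v|_\Sigma=0$ lies in $W^1_{-\delta,\Sigma}(\Omega_+,\CC^3)$, and $\tilde b_{\Omega_+}(u,v)=0$ reduces on such $v$ to the standard weak formulation because $\tilde b_{\Omega_+}$ coincides with $b_{\Omega_+}$ when the first argument sits in $W^1_{\delta}$, while $u|_\Sigma = g$ delivers \eqref{eq4bx2}.

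\textbf{Extracting the coefficients via the symplectic form.} The substantive step is the explicit formula for $a_k$. The plan is to pair $w$ against $\zeta_k$ via $q_{\Omega_+}$; because $\zeta_k$ is, modulo $W^1_{\delta,\Sigma}$, the ``dual'' element to $U_k$, this pairing isolates $a_k$. Since $\zeta_k \in {\cal D}_{\Sigma} \subset W^1_{-\delta,\Sigma}(\Omega_+,\CC^3)$, testing the variational equation $B^{out}_{\Omega_+,\delta} w = f$ against $\zeta_k$ gives
\[
\tilde b_{\Omega_+}(w,\zeta_k) = -b_{\Omega_+}(u_0,\zeta_k).
\]
On the other hand, $\zeta_k \in KerB_{\Omega_+,-\delta} \cap {\cal D}_{\Sigma}$, so Lemma \ref{noyau} (with $f=0$) yields $\tilde b_{\Omega_+}(\zeta_k,v)=0$ for every $v \in W^1_{-\delta,\Sigma}(\Omega_+,\CC^3)$; in particular for $v=w$. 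Combining these,
\[
q_{\Omega_+}(w,\zeta_k) = \tilde b_{\Omega_+}(w,\zeta_k) - \overline{\tilde b_{\Omega_+}(\zeta_k,w)} = -b_{\Omega_+}(u_0,\zeta_k).
\]

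\textbf{Conclusion.} It remains to compute the left-hand side from the expansions $w = \sum_{j=1}^T a_j U_j + w_{\delta}$ and, by \eqref{eq44}, $\zeta_k = U_k + \sum_{j=1}^T t_{k,j}\, U_{j+T} + r_k$ with $r_k \in W^1_{\delta,\Sigma}(\Omega_+,\CC^3)$. Since $q_{\Omega_+}$ vanishes whenever either argument lies in $W^1_{\delta}(\Omega_+,\CC^3)$, and the orthogonality relations \eqref{eq15n12a}--\eqref{eq15n12b} give $q_{\Omega_+}(U_j,U_k) = -i\,\delta_{j,k}$ together with $q_{\Omega_+}(U_j,U_{l+T}) = 0$ for $j,l \in \{1,\ldots,T\}$, only the diagonal term survives, so $q_{\Omega_+}(w,\zeta_k) = -i\,a_k$. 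Comparing with the identity above yields $a_k = -i\,b_{\Omega_+}(u_0,\zeta_k)$, as claimed. The only genuinely non-routine ingredient is the extraction of $a_k$ by this symplectic pairing, which rests on the nondegeneracy of $q_{\Omega_+}$ (Proposition \ref{prosympl}), the dual basis constructed in Lemma \ref{scatt}(b), and the density extension of Lemma \ref{noyau}; all remaining ingredients are delivered by Theorem \ref{th-d-to-n}.
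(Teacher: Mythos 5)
Your proof is correct and follows essentially the same route as the paper: reduce to $B^{out}_{\Omega_+,\delta}w=f$ via the lifting $u_0$, invoke Theorem \ref{th-d-to-n} for existence and uniqueness, and extract $a_k$ by pairing with $\zeta_k$ through $q_{\Omega_+}$, using Lemma \ref{noyau} to kill $\tilde b_{\Omega_+}(\zeta_k,w)$ and the relations \eqref{eq15n12a}--\eqref{eq15n12b} with \eqref{eq44} to get $q_{\Omega_+}(w,\zeta_k)=-ia_k$. (One slip of wording: $\tilde b_{\Omega_+}$ reduces to $b_{\Omega_+}$ because the compactly supported test function $v$ lies in $W^1_{\delta}(\Omega_+,\CC^3)$, i.e. it is the \emph{second} argument, not the first, that matters there.)
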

%
\begin{proof}
$w$ is written under the form \eqref{eq45h1}, therefore Eq. \eqref{eq45g} yields $\tilde{b}_{\Omega_+}(w,\zeta_k) = -{b}_{\Omega_+}(u_0,\zeta_k)$ ($k=1,\ldots,T$). But with notation \eqref{eq44}, for $k=1,\ldots,T$, $\zeta_k \in$ ${\cal D}_{\Sigma} \cap KerB_{\Omega+,-\delta}$, and due to Lemma \ref{noyau}, $\tilde{b}_{\Omega_+}(\zeta_k,w) = 0$ thus $q_{\Omega_+}(w,\zeta_k)= -{b}_{\Omega_+}(u_0,\zeta_k)$. On the other hand taking into account \eqref{eq45h1}, \eqref{eq15n12a}, \eqref{eq15n12b} and \eqref{eq44}, we obtain $q_{\Omega_+}(w,\zeta_k) = -i a_k$.
\end{proof}

\appendix
\section{Operator pencils}
\setcounter{equation}{0}
\label{oppenc}
In this section we recall some definitions and results about operator pencils (see \cite{Kozlov-Mazya-Rossmann_s}, Chapter 1, \cite{Kozlov-Mazya}, appendix A, \cite{Markus}, section 11, \cite{Kozlov-Mazya-Rossmann}, section 5.1, \cite{Gohberg-Krein}, Chapter V, section 9).
Let ${\cal X}$, ${\cal Y}$ be Hilbert spaces with respectively the inner products $(.,.)_{\cal X}$, $(.,.)_{\cal Y}$ and the norms $||.||_{\cal{X}}$,$||.||_{\cal{Y}}$. We denote by $L(\cal{X},\cal{Y})$ the set of linear and bounded operators from $\cal{X}$ into $\cal{Y}$. If $A \in L(\cal{X},\cal{Y})$, we denote the kernel and the range of the operator $A$ by $kerA$ and ${\cal{R}}(A)$. The operator $A$ is said to be Fredholm if ${\cal{R}}(A)$ is closed and the dimension of $kerA$ (= dim$kerA$) and the codimension of ${\cal{R}}(A)$ (= codim${\cal{R}}(A)$) are finite. In that case the index of $A$ is by definition Ind$A$ = dim$kerA$ - codim${\cal{R}}(A)$.

The operator polynomial
\begin{equation}
\label{eq200}
{\cal{U}}(\nu) = \sum_{k=0}^l A_k \nu^k, \, \nu \in \CC,
\end{equation}
where $A_k \in L(\cal{X},\cal{Y})$ is called a polynomial operator pencil.
The point $\nu_0 \in \CC$ is said to be regular if the operator ${\mathscr{U}}(\nu_0)$ is invertible. The set of all nonregular points is called the spectrum of the polynomial operator pencil ${\cal{U}}(\nu)$.

The number $\nu_0 \in \CC$  is called an eigenvalue of the operator
pencil ${\cal{U}}(\nu)$ if there exists $\varphi_0 \in {\cal X}$, $\varphi_0 \neq 0$, such that ${\cal{U}}(\nu_0)$$\varphi_0 = 0$. Such a vector $\varphi_0$ is called an eigenvector of ${\cal{U}}(\nu)$ corresponding to $\nu_0$.
The dimension of $ker{\cal{U}}(\nu_0)$ is called the geometric multiplicity of the eigenvalue $\nu_0$.

Let $\nu_0$ be an eigenvalue of the polynomial operator pencil ${\cal{U}}(\nu)$ and let $\varphi_0$
be an eigenvector of ${\cal{U}}(\nu)$ corresponding to $\nu_0$. If the elements $\varphi_1,\ldots, \varphi_{s-1} \in {\cal X}$ satisfy the equations
\begin{equation}
\label{eq201}
\sum_{k=0}^j \frac{1}{k!} {\cal{U}}^{(k)}(\nu_0) \varphi_{j-k} =0, \, j=1,\ldots,s-1,
\end{equation}
where ${\cal{U}}^{(k)} = d^k {\cal{U}} / d \nu^k$, the ordered collection  $\varphi_0,\ldots, \varphi_{s-1} \in {\cal X}$ is said to be a Jordan chain of ${\cal{U}}$ corresponding to the eigenvalue $\nu_0$. The number $s$ is called the length of this Jordan chain.
The vectors $\varphi_1,\ldots, \varphi_{s-1}$ are said to be generalized eigenvectors corresponding to $\varphi_0$.
The maximal length of all Jordan chains formed by the eigenvector $\varphi_0$ and
corresponding generalized eigenvectors will be denoted by $m(\varphi_0)$ and is called the rank of $\varphi_0$.
An eigenvalue $\nu_0$ of ${\cal U}(\nu)$ is said to be of finite algebraic multiplicity if the kernel of ${\cal U}(\nu_0)$ is finite dimensional and the ranks of the eigenvectors corresponding to $\nu_0$ have a common bound.
Assume that the eigenvalue $\nu_0$ of ${\cal U}(\nu)$ is of finite algebraic multiplicity and denote by $J$ its (finite) geometric multiplicity. In that case
\begin{equation}
\label{eq202}
\underset{\varphi \in  Ker{\cal U}(\nu_0) \setminus \{0\}}
{\mbox{ max } m(\varphi)} < + \infty.
\end{equation}
A set of Jordan chains
$\varphi_{j,0},\ldots, \varphi_{j,\kappa_j-1}$, $j=1,\ldots, J$
is called a canonical system of Jordan chains if

(a) the eigenvectors $\{\varphi_{j,0}\}_{j=1,\ldots,J}$ form a basis in $ker{\cal{U}}(\nu_0)$,

(b) for $j=1,\ldots, J$, let ${\cal M}_j$ be the space spanned by the vectors $\varphi_{1,0},\ldots, \varphi_{j-1,0}$. Then
\begin{equation}
\label{eq203}
m(\varphi_{j,0})= \underset{\varphi \in Ker{\cal U}(\nu_0) \setminus {\cal M}_j}{\mbox{max}} m(\varphi),\, j=1,\ldots,J.
\end{equation}
The numbers $\kappa_j = m(\varphi_{j,0})$ are called the partial multiplicities of the eigenvalue $\nu_0$.
The sum $\kappa_1 + \ldots + \kappa_J$ is called the algebraic multiplicity of the eigenvalue $\nu_0$.

The polynomial operator pencil ${\cal U}(\nu)$ is called Fredholm iff for $\nu \in \CC$ the operator ${\cal U}(\nu)$ is Fredholm and there exists at least one regular point for ${\cal U}(\nu)$ (see \cite{Kozlov-Mazya}, Definition A.8.1).
The spectrum of a Fredholm polynomial operator pencil consists of isolated eigenvalues of finite algebraic multiplicity (see \cite{Kozlov-Mazya}, Proposition A.8.4).
In particular if there exists at least one regular point $\nu_0$ for ${\cal U}(\nu)$ and the operators $A_k$, $k=1,\ldots,l$ are compact, 
for $\nu \in \CC$, ${\cal U}(\nu)$ is a compact perturbation of ${\cal U}(\nu_0)$ which is Fredholm with zero index, and \cite{McLean}, Theorem 2.26 shows that for $\nu \in \CC$, ${\cal U}(\nu)$ is Fredholm with zero index, accordingly the polynomial operator pencil ${\cal U}(\nu)$ is Fredholm and thus its spectrum consists of isolated eigenvalues of finite algebraic multiplicity.

Let ${\cal X}^*$ be the topological dual of the Hilbert space ${\cal X}$. Assume there is a duality between ${\cal X}$ and ${\cal X}^*$, i.e. a sesquilinear form $\langle .,. \rangle_1$ from ${\cal X} \times {\cal X}^*$ into $\CC$ satisfying (see \cite{Kozlov-Mazya}, p.404)
\begin{equation}
\label{eq204}
|\langle\varphi,\varphi^*\rangle_1| \leq ||\varphi||_{\cal X} ||\varphi^*||_{{\cal X}^*},\, \varphi \in {\cal X}, \, \varphi^* \in {{\cal X}^*},
\end{equation}
and such that if $h \in {{\cal X}^*}$, there exists a unique $\varphi^* \in {{\cal X}^*}$ satisfying
\begin{equation}
\label{eq205}
h(\varphi)= \langle\varphi,\varphi^*\rangle_1,\, \varphi \in {\cal X}.
\end{equation}
The same condition is assumed for the Hilbert space ${\cal Y}$, the corresponding sesquilinear form 
being denoted $\langle .,. \rangle_2$. If $A \in L({\cal X},{\cal Y})$, the adjoint operator $A^* \in L({\cal Y}^*,{\cal X}^*)$ satisfies the equality:
\begin{equation}
\label{eq206}
\langle\varphi,A^* \psi \rangle_1 = \langle A\varphi,\psi \rangle_2,\, \varphi \in {\cal X}, \psi \in {\cal Y}^*.
\end{equation}
Let ${\cal U}(\nu)$ be the polynomial operator pencil defined in \eqref{eq200}. The adjoint polynomial operator pencil ${\cal U}^*(\nu)$ is defined by (see \cite{Kozlov-Mazya}, appendix A.9)
\begin{equation}
\label{eq207}
{\cal{U}}^*(\nu) = ({\cal{U}}(\overline{\nu}))^* =  \sum_{k=0}^l A^*_k \nu^k, \, \nu \in \CC,
\end{equation}
that is to say ${\cal U}^*(\nu)$ satisfies the equality
\begin{equation}
\label{eq208}
\langle \varphi, {\cal{U}}^*(\nu) \psi\rangle_1 = \langle {\cal{U}}(\overline{\nu}) \varphi, \psi\rangle_2,\, \varphi \in {\cal X}, \, \psi \in {\cal Y}^*,\, \nu \in \CC.
\end{equation}
${\cal U}(\nu)$ is a Fredholm polynomial operator pencil iff ${\cal U}^*(\nu)$ is a Fredholm polynomial operator pencil (see \cite{Kozlov-Mazya}, p.422). 
$\nu_0$ is an eigenvalue of ${\cal U}(\nu)$ iff $\overline{\nu}_0$ is an eigenvalue of ${\cal U}^*(\nu)$ and the geometric, partial and algebraic multiplicities of $\nu_0$ and $\overline{\nu}_0$ coincide (see \cite{Kozlov-Mazya}, Proposition A.9.2). The following theorem is fundamental (see \cite{Kozlov-Mazya}, Theorem A.10.2):
\begin{theorem}
\label{thkeldysh}
Assume that ${\cal U}(\nu)$ defined in \eqref{eq200} is a Fredholm polynomial operator pencil and  $\nu_0$ is an eigenvalue of ${\cal U}(\nu)$ with geometric multiplicity $J$ and partial multiplicities $\kappa_1,\ldots,\kappa_J$. Assume that $\{\varphi_{j,s}\}_{j=1,\ldots,J, s=0,\ldots, \kappa_j-1}$ is a canonical system of Jordan chains corresponding to $\nu_0$. We have

(a) There exists a unique canonical system $\{\psi_{j,s}\}_{j=1,\ldots,J, s=0,\ldots, \kappa_j-1}$ of Jordan chains of ${\cal U}^*(\nu)$ corresponding to $\overline{\nu}_0$ such that $({\cal U}(\nu))^{-1}$ has the following representation
\begin{equation}
\label{eq210}
({\cal U}(\nu))^{-1}= \sum_{j=1}^J \sum_{s=0}^{\kappa_j -1}\frac{P_{j,s}}{(\nu - \nu_0) ^{\kappa_j -s}} + \Gamma(\nu)
\end{equation}
in a neighborhood of $\nu_0$, where $P_{j,s}$ $\in L({\cal Y},{\cal X})$ are defined by
\begin{equation}
\label{eq211}
P_{j,s} v = \sum_{\sigma = 0}^s \langle v, \psi_{j,\sigma} \rangle_2 \varphi_{j,s - \sigma}, v \in {\cal Y}
\end{equation}
and $\Gamma$ is a holomorphic operator function in a neighborhood of $\nu_0$ with values in $L({\cal Y},{\cal X})$.

(b) The canonical system $\{\psi_{j,s}\}_{j=1,\ldots,J, s=0,\ldots, \kappa_j-1}$ of the first assertion satisfies the biorthogonality conditions
\begin{equation}
\label{eq212}
\sum_{\substack{q,h,s \geq 0,\\
q+h+s=\kappa_j +n}} \frac{1}{q!} \langle {\cal U}^{(q)}(\nu_0)
\varphi_{j,h}, \psi_{k,s} \rangle_2 = \delta_{j,k} \delta_{0,n},\, j,k =1,\ldots,J, n=0,\ldots, \kappa_k-1.
\end{equation}
Here $\varphi_{j,h} =0$ for $h \geq \kappa_j$, $\psi_{k,s}=0$ for $s \geq \kappa_k$.
\end{theorem}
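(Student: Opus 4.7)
The plan is to work locally near $\nu_0$ and analyze the Laurent expansion of $({\cal U}(\nu))^{-1}$. Since ${\cal U}(\nu)$ is a Fredholm polynomial pencil with at least one regular point, and $\nu_0$ is an isolated eigenvalue of finite algebraic multiplicity, $({\cal U}(\nu))^{-1}$ is meromorphic in a punctured neighborhood of $\nu_0$. Write
\begin{equation*}
({\cal U}(\nu))^{-1} = \sum_{k = -M}^{\infty} R_k (\nu - \nu_0)^k,
\end{equation*}
where $M$ is the pole order (which will turn out to be $\max_j \kappa_j$) and the $R_k \in L({\cal Y},{\cal X})$. The holomorphic part $\Gamma(\nu) = \sum_{k \geq 0} R_k (\nu - \nu_0)^k$ will be the $\Gamma$ announced in \eqref{eq210}; the task is to unpack the principal part into the block form \eqref{eq211}.

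First I would extract the algebraic constraints coming from ${\cal U}(\nu)({\cal U}(\nu))^{-1} = I = ({\cal U}(\nu))^{-1}{\cal U}(\nu)$. Substituting the Laurent series and the Taylor expansion ${\cal U}(\nu) = \sum_p \tfrac{1}{p!}{\cal U}^{(p)}(\nu_0)(\nu-\nu_0)^p$ and equating coefficients of $(\nu-\nu_0)^n$ for $n \leq -1$ yields the recursions
\begin{equation*}
\sum_{p+q=n,\, p\geq 0} \tfrac{1}{p!}{\cal U}^{(p)}(\nu_0)\, R_q = 0,\qquad \sum_{p+q=n,\, q\geq 0} R_p\, \tfrac{1}{q!}{\cal U}^{(q)}(\nu_0) = 0.
\end{equation*}
These recursions force the range of every $R_{-k}$ to lie in the generalized null space $N_0 = \mathrm{span}\{\varphi_{j,s}\}$ of ${\cal U}(\nu)$ at $\nu_0$, and dually to have kernel of finite codimension; in particular each $R_{-k}$ has finite rank. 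Because $\{\varphi_{j,s}\}_{j,s}$ is a basis of $N_0$ I can then write uniquely $R_{-k} v = \sum_{j,s} c_{j,s}^{(k)}(v)\, \varphi_{j,s}$ for some linear functionals $c_{j,s}^{(k)}$ on ${\cal Y}$.

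Plugging this decomposition back into the first recursion and using the Jordan chain identity $\sum_{h=0}^s \tfrac{1}{h!}{\cal U}^{(h)}(\nu_0)\varphi_{j,s-h} = 0$ (which kills many terms), I would show that the functionals $c_{j,s}^{(k)}$ are not independent across $k$: they can be expressed in terms of a single family of bounded linear functionals $v \mapsto \langle v, \psi_{j,\sigma}\rangle_2$ (using the duality \eqref{eq205} to realize each functional as pairing with a unique $\psi_{j,\sigma} \in {\cal Y}^*$). Regrouping the Laurent coefficients by Jordan block $j$ and chain position $s$ yields precisely the form $P_{j,s} v = \sum_{\sigma=0}^s \langle v, \psi_{j,\sigma}\rangle_2\, \varphi_{j,s-\sigma}$ of \eqref{eq211}, and uniqueness of $\{\psi_{j,\sigma}\}$ follows from the linear independence of $\{\varphi_{j,s}\}$. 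Applying the second (right) recursion together with the adjoint identity \eqref{eq208}, I would then verify that $\{\psi_{j,s}\}$ satisfies the defining equations of Jordan chains for ${\cal U}^*$ at $\overline{\nu}_0$; that it is a \emph{canonical} system with the same partial multiplicities $\kappa_1,\ldots,\kappa_J$ follows from the fact, recalled before the theorem, that geometric and partial multiplicities of $\nu_0$ for ${\cal U}$ and of $\overline{\nu}_0$ for ${\cal U}^*$ coincide, combined with the explicit triangular structure of $P_{j,s}$.

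For part (b), I would multiply out either of the identities ${\cal U}(\nu)({\cal U}(\nu))^{-1} = I$ or $({\cal U}(\nu))^{-1}{\cal U}(\nu) = I$ as a product of power series, now retaining the non-negative Laurent powers as well, apply to $\varphi_{j,h}$, pair in $\langle \cdot, \psi_{k,s}\rangle_2$, and use the Jordan chain identity once more to kill redundant contributions; the remaining sum is exactly $\sum_{q+h+s=\kappa_j+n} \tfrac{1}{q!}\langle {\cal U}^{(q)}(\nu_0)\varphi_{j,h}, \psi_{k,s}\rangle_2$ on one side and $\delta_{j,k}\delta_{n,0}$ on the other, giving \eqref{eq212}. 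The main obstacle is the bookkeeping of the previous paragraph: showing that the \emph{same} dual family $\{\psi_{j,\sigma}\}$ appears consistently in every $R_{-k}$ and reassembles into the block form \eqref{eq210}-\eqref{eq211}; this relies crucially on the canonicity (maximal ranks) of the chosen system $\{\varphi_{j,s}\}$, which pins down a unique reciprocal structure on the dual side.
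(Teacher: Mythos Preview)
The paper does not give its own proof of this theorem: it is stated in Appendix~\ref{oppenc} purely as a quotation of \cite{Kozlov-Mazya}, Theorem~A.10.2, with no argument supplied. So there is nothing in the paper to compare your proposal against.

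That said, your sketch is the standard Keldysh-type argument and is essentially the route taken in \cite{Kozlov-Mazya} (and in \cite{Kozlov-Mazya-Rossmann_s}, Chapter~1). The meromorphy of $({\cal U}(\nu))^{-1}$ near an isolated eigenvalue of finite algebraic multiplicity, the recursions from ${\cal U}(\nu)({\cal U}(\nu))^{-1}=I=({\cal U}(\nu))^{-1}{\cal U}(\nu)$, and the identification of the principal part via finite-rank operators with range in the span of the $\varphi_{j,s}$ are all correct in outline. The one place where your sketch is thin is the passage from ``the functionals $c_{j,s}^{(k)}$ are not independent across $k$'' to the exact triangular structure $P_{j,s}v=\sum_{\sigma=0}^s\langle v,\psi_{j,\sigma}\rangle_2\,\varphi_{j,s-\sigma}$ and, especially, to the claim that the resulting $\{\psi_{j,s}\}$ form a \emph{canonical} system for ${\cal U}^*$ at $\overline{\nu}_0$. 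In the references this step is done by an explicit block-triangular reduction (or by passing to the companion linearization and using the ordinary Jordan theory there), and canonicity of the dual chains does not follow merely from equality of partial multiplicities; one has to check the rank-maximality condition \eqref{eq203} for the $\psi_{j,0}$, which uses the specific ordering $\kappa_1\geq\cdots\geq\kappa_J$ and the explicit form of the leading singular term. If you want a self-contained write-up, that is the place to add detail.
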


Let ${\cal U}(\nu)$ be a Fredholm polynomial operator pencil of the form \eqref{eq200}.
For $\nu \in \CC$ let ${\cal N}({\cal{U}}(\partial_x), \nu)$ be the set of solutions to the differential equation:
\begin{equation}
\label{eq15i}
{\cal{U}}(\partial_x)u = 0
\end{equation}
with the "power-exponential" form:
\begin{equation}
\label{eq15j}
u(x) = e^{\nu x} \sum_{\sigma=0}^s \frac{x^{\sigma}}{\sigma !} \varphi_{s - \sigma}
\end{equation}
where $s \in \NN$ is arbitrary, $\varphi_0$, $\ldots$, $\varphi_s$ $\in {\cal X}$ and $\varphi_0 \neq 0$. In that case the function \eqref{eq15j} is a solution to Eq.\eqref{eq15i} iff $\nu$ is an eigenvalue of ${\cal{U}}(\nu)$ and $\varphi_0$, $\ldots$, $\varphi_s$ is a Jordan chain corresponding to this eigenvalue (see \cite{Kozlov-Mazya-Rossmann}, Lemma 5.1.3).
Let $\nu$ be an eigenvalue of ${\cal{U}}(\nu)$, $J$ its geometric multiplicity, $\kappa_1,\ldots, \kappa_J$ its partial multiplicities, $\{\varphi_{j,s}\}_{j=1,\ldots,J, s=0,\ldots, \kappa_j -1}$ a canonical system of Jordan chains of ${\cal{U}}$ corresponding to $\nu$. It can be inferred that the "power-exponential" functions  
$\displaystyle{e^{\nu x} \sum_{\sigma=0}^s \frac{x^{\sigma}}{\sigma !} \varphi_{j,s - \sigma}}$, $j=1,\ldots,J$, $s=0,\ldots, \kappa_j -1$ form a basis of ${\cal N}({\cal{U}}(\partial_x), \nu)$. 
In particular the dimension of ${\cal N}({\cal{U}}(\partial_x), \nu)$ is equal to the algebraic multiplicity $\kappa_1 + \ldots + \kappa_J$ of the eigenvalue $\nu$ (see \cite{Kozlov-Mazya-Rossmann}, Lemma 5.1.4).
\section{Some linear independence results}
\label{results}
\setcounter{equation}{0}
In this section we establish results on the linear independence of some families of functions.
The present lemma is a preliminary result before Lemma \ref{indep}.
\begin{lemma}
\label{expol}
(a) Let $\omega_1,\ldots, \omega_N \in \RR$ be pairwise distinct, $P_1,\ldots, P_N$ be N polynomial functions with values in a Hilbert space ${\cal H}$, $c \in \RR$ and $f$ be the function defined on $[c, + \infty)$ by
\begin{equation}
\label{eq15n}
f(t) = \sum_{j=1}^N e^{i \omega_j t} P_j(t),\, t \in [c, + \infty).
\end{equation}
In that case $f \in L^2([c, + \infty), {\cal H})$ $\Rightarrow$ $P_j \equiv 0$ $(j =1,\ldots, N)$ $\Rightarrow$ $f\equiv 0$.

(b) More generally let $\omega_1,\ldots, \omega_N \in \RR$, $\beta_1,\ldots,\beta_N \in \RR_+$, $P_1,\ldots, P_N$ be N polynomial functions  with values in a Hilbert space ${\cal H}$, $c \in \RR$ and $f$ be the function defined on $[c, + \infty)$ by
\begin{equation}
\label{eq15na}
f(t) = \sum_{j=1}^N e^{(\beta_j +i \omega_j) t} P_j(t),\, t \in [c, + \infty).
\end{equation}
\end{lemma}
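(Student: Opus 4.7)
My plan is to prove (a) first by a standard "leading coefficient" argument, then deduce (b) from (a) by peeling off the terms with maximal $\beta_j$.

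For part (a), the strategy is to extract the leading coefficient of each $P_j$ via an averaging procedure, exploiting the $L^2$ hypothesis to force it to vanish. Let $d = \max_j \deg P_j$ and write $P_j(t) = a_j t^d + Q_j(t)$ with $\deg Q_j < d$ (allowing $a_j = 0$ if $\deg P_j < d$). Fix $j \in \{1,\ldots,N\}$ and consider
\[
I_j(T) = \frac{1}{T^{d+1}}\int_c^{c+T} e^{-i\omega_j t} f(t)\, dt.
\]
By Cauchy--Schwarz, $\|I_j(T)\|_{\cal H} \leq T^{-d-1/2}\|f\|_{L^2([c,+\infty),{\cal H})} \to 0$. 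On the other hand, expanding $f$, the integral splits into $\int_c^{c+T} P_j(t)\,dt$, contributing $a_j T^{d+1}/(d+1) + O(T^d)$, and oscillatory integrals $\int_c^{c+T} e^{i(\omega_k - \omega_j)t} P_k(t)\,dt$ for $k \neq j$, which an integration by parts against the nonzero exponent $\omega_k - \omega_j$ shows to be $O(T^{d})$. Dividing by $T^{d+1}$ and passing to the limit yields $a_j/(d+1) = 0$. Hence every $P_j$ has degree strictly less than $d$; by finite induction on $d$ (the base case $d=0$ being identical), all $P_j \equiv 0$, and therefore $f \equiv 0$.

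For part (b), I expect the intended conclusion to be the same implication $f \in L^2 \Rightarrow P_j \equiv 0$, under the natural assumption that the pairs $(\beta_j, \omega_j)$ are pairwise distinct. Let $B = \max_j \beta_j \geq 0$ and set $J = \{j : \beta_j = B\}$. Since $e^{-Bt} \leq 1$ on $[c, +\infty)$ (using $c$ may be replaced by $\max(c,0)$ if needed, modulo absorbing the finite-length contribution), the function $e^{-Bt} f(t)$ still belongs to $L^2([c,+\infty), {\cal H})$. Moreover
\[
e^{-Bt} f(t) = \sum_{j \in J} e^{i\omega_j t} P_j(t) + \sum_{j \notin J} e^{(\beta_j - B + i\omega_j)t} P_j(t),
\]
and each term in the second sum decays exponentially since $\beta_j - B < 0$, hence lies in $L^2$. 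Consequently the sum over $j \in J$ is in $L^2$; the $\omega_j$, $j \in J$, are pairwise distinct by the distinct-pairs hypothesis, so part (a) forces $P_j \equiv 0$ for every $j \in J$. Induction on the number of distinct values of $\beta_j$ (the base case being part (a) when the single value is $B=0$) yields $P_j \equiv 0$ for all $j$.

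The only real subtlety is the oscillatory estimate in (a): for a Hilbert-space-valued polynomial $P_k(t)$ and $\omega \neq 0$, one must verify $\int_c^{c+T} e^{i\omega t} P_k(t)\,dt = O(T^{\deg P_k})$; this follows by one integration by parts (applied componentwise, or directly in ${\cal H}$ using that $t \mapsto P_k(t)/(i\omega)$ is differentiable with polynomial derivative of degree $\deg P_k - 1$), producing a boundary term of order $T^{\deg P_k}$ and a new integral of the same structure but with polynomial of one lower degree, to which induction on $\deg P_k$ applies. Everything else is bookkeeping.
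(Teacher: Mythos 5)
Your proof is correct, but it takes a genuinely different route from the paper's. For part (a) the paper does not test $f$ against individual exponentials; it regroups the sum by powers of $t$, writing $f(t)=\sum_{j=0}^M t^j Q_j(t)$ with each $Q_j$ a trigonometric polynomial, and kills the top coefficient $Q_M$ by computing $\int_a^{a+b}|Q_M(t)|^2\,dt$ exactly (a term $b\sum_k|a_{M,k}|^2$ plus bounded cross terms) and bounding it by $a^{-2M}$ times quantities controlled by $\int_a^{a+b}|f|^2\to 0$, letting $a\to\infty$ first and then $b\to\infty$. You instead extract, for each fixed frequency $\omega_j$, the degree-$d$ coefficient $a_j$ of $P_j$ by averaging $e^{-i\omega_j t}f(t)$ over $[c,c+T]$ with weight $T^{-(d+1)}$, using Cauchy--Schwarz against the $L^2$ hypothesis on one side and the integration-by-parts bound $O(T^{d})$ on the off-diagonal oscillatory integrals on the other. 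Both are leading-coefficient arguments and both are sound; yours trades the explicit mean-square computation of a trigonometric polynomial for the (elementary, correctly justified) oscillatory estimate. For part (b) the difference is sharper: the paper reruns the same two-parameter estimate with the extra normalization $e^{-2a\beta_P}$ attached to the fastest-growing group, whereas you reduce (b) to (a) by multiplying by $e^{-Bt}$, $B=\max_j\beta_j$, peeling off the top group and inducting on the number of distinct growth rates --- a cleaner reduction that reuses (a) as a black box. Two remarks on the statement rather than the proof: the missing conclusion of (b) is the displaced sentence sitting just after the lemma environment in the source, and you are right that a distinctness hypothesis on the pairs $(\beta_j,\omega_j)$ is needed for the conclusion $P_j\equiv 0$; the paper's proof silently performs exactly this regrouping (its index sets $I_{p,k}$ carry pairwise distinct frequencies), so your explicit hypothesis matches what is actually proved and used in Lemma \ref{indep}.
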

Then $f \in L^2([c, + \infty),{\cal H})$ $\Rightarrow$  $P_j \equiv 0$ $(j =1,\ldots, N)$ $\Rightarrow$ $f\equiv 0$.

\begin{proof}
We shall give a proof of this lemma when the Hilbert space ${\cal H}$ is the set of complex numbers $\CC$, the proof in the general case being similar.
(a) Assume there exists $1 \leq j\leq N$ with the property that $P_j \not \equiv 0$ and write $f$ under the form
\begin{equation}
\label{eq15o}
f(t) = \sum_{j=0}^M t^j Q_j(t),\, t \in [c,+ \infty),
\end{equation}
where $M \in \NN$,
\begin{equation}
\label{eq15p}
Q_j(t) = \sum_{k \in I_j} a_{j,k} e^{i \omega_k t},\, t \in [c,+ \infty),
\end{equation}
$I_j \subset \{1,\ldots, N \}$ ($j =0,\ldots, M$) and $Q_M \not \equiv 0$.
If $a$ and $b >0$, 
\begin{eqnarray}
\label{eq15q}
a^{2M} \int_{a}^{a+b} |Q_M(t)|^2 dt &\leq& \int_{a}^{a+b} |t^M Q_M(t)|^2 dt \nonumber \\
&\leq& 2 (\int_{a}^{a+b} |f(t)|^2 dt+ \int_{a}^{a+b} |\sum_{j=0}^{M-1} t^j Q_j(t)|^2 dt),
\end{eqnarray}
\begin{eqnarray}
\label{eq15r}
&&\displaystyle{\int_{a}^{a+b} |Q_M(t)|^2 dt }=  \displaystyle{b \sum_{k \in I_M} |a_{M,k}|^2 +  \sum_{k, l \in I_M, k \neq l} a_{M,k} \overline{a_{M,l}} \int_{a}^{a+b} e^{i (\omega_k - \omega_l)t} dt}=\nonumber \\
&&\displaystyle{b \sum_{k \in I_M} |a_{M,k}|^2} 
+  \displaystyle{\sum_{k, l \in I_M, k \neq l} a_{M,k} \overline{a_{M,l}} e^{i (\omega_k - \omega_l)(a+b/2)}(\frac{\sin b(\omega_k - \omega_l)/2}{(\omega_k - \omega_l)/2}}).
\end{eqnarray}
Owing to \eqref{eq15q} and \eqref{eq15r}, there exist $C_1>0$ and $C_2 >0$ satisfying
\begin{equation}
\label{eq15s}
b \sum_{k \in I_M} |a_{M,k}|^2 \leq
2 a^{-2M} (\int_{a}^{a+b} |f(t)|^2 dt+ C_1 b (a+ b)^{2(M-1)})+ C_2,\,a>0,b >0
\end{equation}
(in the case $M=0$, there is no second term in the parenthesis in \eqref{eq15s}).
In \eqref{eq15s}, first make $a$ tend to $+ \infty$ and next $b$ tend to $+ \infty$. We get
\begin{equation}
\label{eq15t}
\sum_{k \in I_M} |a_{M,k}|^2 = 0
\end{equation}
and therefore $Q_M \equiv 0$ in contradiction with the hypothesis. This shows that $Q_j\equiv0$ ($j =0,\ldots, M$) so that $P_j \equiv 0$ ($j=1,\ldots, N$) and $f \equiv 0$.

(b) Assume there exists $1 \leq j\leq N$ with the property that $P_j \not \equiv 0$ and write $f$ under the form
\begin{equation}
\label{eq15u}
f(t)= \sum_{p=1}^P e^{\beta_p t}(\sum_{k=0}^{M_p} t^k Q_{p,k}(t))
\end{equation}
where $1 \leq P \leq N$, $M_1,\ldots,M_P \in \NN$, $0 \leq \beta_1 < \cdots < \beta_P$,
\begin{equation}
\label{eq15v}
Q_{p,k}(t) = \sum_{l \in I_{p,k}} a_{p,k,l} e^{i \omega_l t},\, t \in [c,+ \infty),
\end{equation}
$I_{p,k} \subset \{1,\ldots, N \}$ ($p=1,\ldots,P$, $k =0,\ldots, M_p$), $\omega_l,l \in I_{p,k}$ being pairwise distinct, and $Q_{P,M_P} \not \equiv 0$.
By an inequality similar to \eqref{eq15q} where $Q_M$ in \eqref{eq15q} is replaced by $Q_{P,M_P}$, there exist $C_1>0$ and $C_2>0$ satisfying
\begin{eqnarray}
\label{eq15w}
&b& \sum_{l \in I_{P,M_P}} |a_{P,M_P,l}|^2 \leq \nonumber \\
&2& a^{-2M_P} e^{-2a \beta_P}(\int_{a}^{a+b} |f(t)|^2 dt+ C_1 b c(a,b))+ C_2, a >0, b>0,
\end{eqnarray}
where
\begin{equation}
\label{eq15x}
c(a,b) = \sup ((a+ b)^{2(M_P-1)}e^{2(a+ b) \beta_P}, (a+ b)^{2M_P}e^{2(a+b) \beta_{P-1}}).
\end{equation}
The rest of the proof is as in part (a).
\end{proof}

The next lemma shows the linear independence of some families of functions and enables to determine the dimension of the space ${\cal D}_W$ in section \ref{Asymptotics}.
\begin{lemma}
\label{indep}
Let $\beta < \gamma$ $\in \RR$ be such that $\mathscr{L}(\nu)$ has no eigenvalue on the lines $Re \nu = - \beta$ and $Re \nu = - \gamma$.
Let $\kappa$ be the sum of the algebraic multiplicities of the eigenvalues of $\mathscr{L}(\nu)$ (strictly) between the lines $Re \nu = - \beta$ and $Re \nu = - \gamma$. 
Let $\chi \in {\cal C}^{\infty}(\RR,\RR)$ be such that $\chi(x) =0$ for $x \leq 1$, $\chi(x) =1$ for $x \geq 2$.
With the notations of Proposition \ref{pro2sol}, (a), denote by $\{W_k\}_{k=1,\ldots,\kappa}$ the family $\{\chi(x_3) u_{j,s}^i\}_{i=1,\ldots, N,j=1,\ldots,J_i, s=0,\ldots,\kappa_{i,j} -1}$.
The functions $W_k$, ${k=1,\ldots,\kappa}$ are linearly independent modulo $W^0_{\gamma}(\Omega_+, \CC^3)$ (i.e., if $\alpha_1,\ldots, \alpha_{\kappa}$ $\in$ $\CC$ are such that $\sum_{k=1}^{\kappa} \alpha_k W_k \in W^0_{\gamma}(\Omega_+, \CC^3)$, then $\alpha_1= \cdots = \alpha_{\kappa} =0$). In particular they are linearly independent.
\end{lemma}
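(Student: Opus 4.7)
The plan is as follows. Suppose $\sum_{k=1}^{\kappa}\alpha_k W_k \in W^0_{\gamma}(\Omega_+,\CC^3)$, and reindex the $W_k$ as $\chi(x_3)\,u_{j,s}^i$ with coefficients $\alpha_{j,s}^i$. The goal is to show that every $\alpha_{j,s}^i$ vanishes. On the region $\{x_3\geq 2\}$ the cutoff satisfies $\chi(x_3)=1$, so the linear combination reduces there to
\[
G(x_1,x_3)=\sum_{i=1}^N e^{\nu_i x_3}\,P_i(x_1,x_3),\qquad P_i(x_1,x_3)=\sum_{j=1}^{J_i}\sum_{s=0}^{\kappa_{i,j}-1}\alpha_{j,s}^i\sum_{\sigma=0}^{s}\frac{x_3^{\sigma}}{\sigma!}\,\varphi_{j,s-\sigma}^i(x_1),
\]
where each $P_i(x_1,\cdot)$ is a polynomial in $x_3$ whose coefficients, as functions of $x_1$, lie in $H^1(\omega_h,\CC^3)\subset L^2(\omega_h,\CC^3)$.

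By Fubini, the assumption $e^{\gamma x_3}G\in L^2(\Omega_+,\CC^3)$ amounts to $e^{\gamma\,\cdot}G(\cdot,\cdot)\in L^2((0,+\infty),L^2(\omega_h,\CC^3))$; in particular, restricting to $[2,+\infty)$,
\[
e^{\gamma x_3}G(\cdot,x_3)=\sum_{i=1}^N e^{(\gamma+\nu_i)x_3}\,P_i(\cdot,x_3)\in L^2\bigl([2,+\infty),\,L^2(\omega_h,\CC^3)\bigr).
\]
This is exactly the setting of Lemma \ref{expol}(b) applied with the Hilbert space $\mathcal H=L^2(\omega_h,\CC^3)$, with $c=2$, $\beta_i:=\mathrm{Re}(\gamma+\nu_i)$ and $\omega_i:=\mathrm{Im}\,\nu_i$. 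Its hypotheses are met: each $\nu_i$ lies strictly in the strip $-\gamma<\mathrm{Re}\,\nu<-\beta$, so $\beta_i>0$; and the pairs $(\beta_i,\omega_i)$ are pairwise distinct since the eigenvalues $\nu_i$ are. The lemma therefore forces $P_i(\cdot,x_3)\equiv 0$ in $L^2(\omega_h,\CC^3)$ for every $i=1,\ldots,N$.

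To conclude, fix $i$. The vanishing of $P_i$ rewrites as
\[
\sum_{j=1}^{J_i}\sum_{s=0}^{\kappa_{i,j}-1}\alpha_{j,s}^i\,u_{j,s}^i(x)\equiv 0,
\]
and this polynomial-in-$x_3$ identity, being real-analytic, extends from $x_3\geq 2$ to all of $\RR$. By Proposition \ref{pro2sol}(a) the family $\{u_{j,s}^i\}_{j,s}$ is a basis of $\mathcal N(\mathscr L(\partial_x),\nu_i)$, hence linearly independent; so $\alpha_{j,s}^i=0$ for every $j$ and $s$, which proves the lemma. The only conceptual point is packaging the $x_1$-dependence into the value space so that Lemma \ref{expol}(b) separates the spectral blocks; after that the linear independence within each block is already provided by Proposition \ref{pro2sol}(a), and no further estimate is needed.
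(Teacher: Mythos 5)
Your proof is correct and follows exactly the route the paper takes: the paper's own proof is a one-line reduction to Lemma \ref{expol} (applied with values in the Hilbert space $L^2(\omega_h,\CC^3)$, after restricting to $x_3\geq 2$ where $\chi\equiv 1$ and absorbing the weight $e^{\gamma x_3}$, which makes each $\beta_i=\gamma+\mathrm{Re}\,\nu_i>0$) together with the basis property of the power-exponential functions from Proposition \ref{pro2sol}(a). You have merely spelled out the details that the paper leaves implicit.
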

%
\begin{proof}
The lemma is a consequence of Lemma \ref{expol} and the fact that if $\nu_i$ is an eigenvalue of $\mathscr{L}(\nu)$, the "power-exponential" functions  
$\displaystyle{e^{\nu_i x} \sum_{\sigma=0}^s \frac{x^{\sigma}}{\sigma !} \varphi^i_{j,s - \sigma}}$, $j=1,\ldots,J_i$, $s=0,\ldots, \kappa_{i,j} -1$ form a basis of ${\cal N}(\mathscr{L}(\partial_x), \nu_i)$ (see appendix \ref{oppenc}). 
\end{proof}
\section{Result on the algebraic multiplicities}
\label{alg}
\setcounter{equation}{0}
The following result is similar to that of \cite{Nazarov-Plamenevsky}, p.111 or \cite{Baskin}, Theorem 2.2.4, but in the present context and with a distinct proof which is a little more "constructive".
\begin{proposition}
\label{ker*}
Let $\beta < \gamma$ $\in \RR$ be such that $\mathscr{L}(\nu)$ has no eigenvalue on the lines $Re \nu = - \beta$, $Re \nu = - \gamma$, $Re \nu = \beta$ and $Re \nu = \gamma$, consequently $B_{\Omega_+,\beta}$, $B_{\Omega_+,\gamma}$, $B_{\Omega_+,\beta}^*$ and $B_{\Omega_+,\gamma}^*$ are Fredholm operators (Proposition \ref{propfredholm}). Let $\kappa$ be the sum of the algebraic multiplicities of the eigenvalues of $\mathscr{L}(\nu)$ (strictly) between the lines $Re \nu = - \beta$ and $Re \nu = - \gamma$. Then $KerB_{\Omega_+,\gamma}$ $\subset$ $KerB_{\Omega_+,\beta}$, $KerB_{\Omega_+,\beta}^*$ $\subset$ $KerB_{\Omega_+,\gamma}^*$ and
\begin{equation}
\label{eq43a1}
dim (KerB_{\Omega_+,\beta}/KerB_{\Omega_+,\gamma})+dim (KerB_{\Omega_+,\gamma}^*/KerB_{\Omega_+,\beta}^*) = \kappa.
\end{equation}
\end{proposition}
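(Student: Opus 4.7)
The plan is to prove the index-type formula \eqref{eq43a1} by realising both $V/W$ and $V^*/W^*$ as subspaces of two $\kappa$-dimensional wave spaces and then identifying them as mutual annihilators under a nondegenerate Green-flux pairing. Write $V=KerB_{\Omega_+,\beta}$, $W=KerB_{\Omega_+,\gamma}$, $V^*=KerB_{\Omega_+,\gamma}^*=KerB_{\Omega_+,-\gamma}$ and $W^*=KerB_{\Omega_+,\beta}^*=KerB_{\Omega_+,-\beta}$.

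First I would dispatch the inclusions: since $\beta<\gamma$ one has $W^1_{\gamma,\Sigma}(\Omega_+,\CC^3)\subset W^1_{\beta,\Sigma}(\Omega_+,\CC^3)$ and $W^1_{-\beta,\Sigma}(\Omega_+,\CC^3)\subset W^1_{-\gamma,\Sigma}(\Omega_+,\CC^3)$, so Lemma \ref{noyau1}(a), applied as stated and again with $(\beta,\gamma)$ replaced by $(-\gamma,-\beta)$, gives $W\subset V$ and $W^*\subset V^*$. Next, Proposition \ref{asymt} with $f=0$ applied to any $u\in V$ produces $u=\sum_{k=1}^\kappa c_k(u)W_k+u_\gamma$ with $u_\gamma\in W^1_{\gamma,\Sigma}(\Omega_+,\CC^3)$, and the leading-wave map $\Phi:V\to{\cal D}_W$, $u\mapsto\sum_k c_k(u)W_k$, has kernel $V\cap W^1_{\gamma,\Sigma}(\Omega_+,\CC^3)$; by Lemma \ref{noyau1}(b) this kernel equals $W$, so $V/W\hookrightarrow{\cal D}_W\cong\CC^\kappa$. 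On the dual side, the symmetry $\mathscr{L}^*(\nu)=\mathscr{L}(-\nu)$ of \eqref{eq104e1} combined with the adjoint theory recalled in appendix \ref{oppenc} makes the spectrum of $\mathscr{L}$ invariant under $\nu\mapsto-\overline{\nu}$ with all multiplicities preserved, so the eigenvalues lying in $\beta<\mbox{Re}\,\nu<\gamma$ also carry total algebraic multiplicity $\kappa$. Re-running Proposition \ref{asymt} with parameters $(-\gamma,-\beta)$ in place of $(\beta,\gamma)$ then yields an injection $\Phi^*:V^*/W^*\hookrightarrow{\cal D}_W^*$ into a second $\kappa$-dimensional wave space ${\cal D}_W^*$ built from canonical Jordan chains at those eigenvalues.

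The heart of the proof is a nondegenerate sesquilinear pairing $Q:{\cal D}_W\times{\cal D}_W^*\to\CC$. Taking representative generators $W_k=\chi(x_3)u^i_{j,s}$ as in \eqref{eq15k1} and $W_l^*=\chi(x_3)v^{i'}_{j',s'}$ built via \eqref{eq15m} from a canonical system of Jordan chains at the eigenvalues $\nu_{i'}$ lying in $\beta<\mbox{Re}\,\nu<\gamma$, I would define
\[
Q(W_k,W_l^*)=\int_{\omega_h\times\{R\}}\bigl(\sigma_{i3}(\overline{W_l^*})(W_k)_i-\sigma_{i3}(W_k)\overline{(W_l^*)_i}\bigr),\qquad R\geq 2,
\]
and extend sesquilinearly. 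Repeating the Taylor expansion of $\mathscr{L}(\partial_x)$ around $\nu_i$ and the integration-by-parts identity \eqref{eq15f3}--\eqref{eq15f6} from the proof of Proposition \ref{prosympl}, followed by the Keldysh biorthogonality \eqref{eq212}, shows that this integral is independent of $R$ and evaluates to $\delta_{i,i'}\delta_{j,j'}\delta_{s+s'=\kappa_{i,j}-1}$, so $Q$ is nondegenerate. The same Green-flux computation applied to $u\in V$ and $v\in V^*$ expresses $Q(\Phi(u),\Phi^*(v))$ as a limit of boundary terms that coincides with $b_{\Omega_+}(u,v)-\overline{b_{\Omega_+}(v,u)}$; both of these vanish once one checks that the admissible pairings of test functions are legitimate (the inclusions $W^1_{\pm\beta,\Sigma}\subset W^1_{\pm\gamma,\Sigma}$ make this available), giving $Q(\Phi(V),\Phi^*(V^*))=0$ and hence $\dim(V/W)+\dim(V^*/W^*)\leq\kappa$.

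For the reverse inequality I would extend $B_{\Omega_+,\gamma}$ to $\tilde{B}_{\Omega_+,\gamma}:{\cal D}_\Sigma\to(W^1_{-\gamma,\Sigma}(\Omega_+,\CC^3))^*$ exactly as in \eqref{eq43b}--\eqref{eq2010}, now with the present $\gamma$ in place of $\delta$. Given $w\in{\cal D}_W$ that annihilates $\Phi^*(V^*)$ under $Q$, the equation $B_{\Omega_+,\gamma}u_\gamma=-\tilde{B}_{\Omega_+,\gamma}(w)$ is solvable in $W^1_{\gamma,\Sigma}(\Omega_+,\CC^3)$: by Proposition \ref{propfredholm} the closed range of $B_{\Omega_+,\gamma}$ coincides with the annihilator of $V^*$, and for $v\in V^*$ the pairing $\langle\tilde{B}_{\Omega_+,\gamma}(w),v\rangle$ is once again the Green flux $Q(w,\Phi^*(v))$, which vanishes by hypothesis. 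Setting $u=u_\gamma+w$ then yields $u\in V$ with $\Phi(u)=w$, so $\Phi(V)$ is exactly the $Q$-annihilator of $\Phi^*(V^*)$ in ${\cal D}_W$ and the two dimensions sum to $\kappa$. The main obstacle lies in this last identification: the extension $\tilde{B}_{\Omega_+,\gamma}$ has to be constructed in the present generality (the paper only builds it for $\beta=-\delta$, $\gamma=\delta$), the Green-flux representation of $Q$ must be justified for eigenvalues that need not lie on the imaginary axis, and the computations \eqref{eq15f3}--\eqref{eq15f6} have to be pushed through with genuinely complex exponents to confirm that $\langle\tilde{B}_{\Omega_+,\gamma}(w),v\rangle=Q(w,\Phi^*(v))$ on all of $V^*$.
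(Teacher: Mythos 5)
Your route is genuinely different from the paper's. You follow the classical Nazarov--Plamenevsky scheme: embed $V/W$ and $V^*/W^*$ into two $\kappa$-dimensional wave spaces and identify their images as mutual annihilators for a nondegenerate Green-flux pairing $Q$. The paper deliberately avoids this machinery (it describes its own proof as ``a little more constructive''): it fixes a basis $Z_1,\ldots,Z_d$ of a complement of $KerB_{\Omega_+,\gamma}$ in $KerB_{\Omega_+,\beta}$, normalizes so that $Z_i - W_i - \sum_{j>d}C_{ij}W_j \in W^1_{\gamma,\Sigma}(\Omega_+,\CC^3)$, shows that $E={\cal R}(B_{\Omega_+,\beta})\cap i_{\beta,\gamma}(W^1_{-\gamma,\Sigma}(\Omega_+,\CC^3))^*$ splits as $B_{\Omega_+,\beta}(W^1_{\gamma,\Sigma}(\Omega_+,\CC^3))\oplus\mathrm{span}\{B_{\Omega_+,\beta}W_i\}_{i>d}$, and converts the $(\kappa-d)$-dimensional summand into $\dim(KerB^*_{\Omega_+,\gamma}/KerB^*_{\Omega_+,\beta})=\kappa-d$ by an elementary lemma on orthogonals of a direct sum. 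The paper therefore never needs the pairing $Q$, the extension $\tilde{B}$, or the residue computation outside the symmetric case $\beta=-\delta$, $\gamma=\delta$ of Assumption \ref{ass1}; your version buys a cleaner duality picture at the price of rebuilding all of Section \ref{Radiation} for an asymmetric weight pair and for eigenvalues off the imaginary axis.

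There is a genuine gap in your vanishing step. For $u\in KerB_{\Omega_+,\beta}\subset W^1_{\beta,\Sigma}(\Omega_+,\CC^3)$ and $v\in KerB_{\Omega_+,-\gamma}\subset W^1_{-\gamma,\Sigma}(\Omega_+,\CC^3)$ with $\beta<\gamma$, neither $b_{\Omega_+}(u,v)$ nor $b_{\Omega_+}(v,u)$ is a convergent integral: the wave part of $u$ behaves like $e^{\nu_i x_3}$ with $-\gamma<\mbox{Re}\,\nu_i<-\beta$ while the wave part of $v$ behaves like $e^{-\overline{\nu_{i'}}x_3}$ with $\beta<\mbox{Re}(-\overline{\nu_{i'}})<\gamma$, so their product is in general exponentially growing. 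The inclusions you invoke do not repair this ($W^1_{-\beta,\Sigma}\subset W^1_{-\gamma,\Sigma}$ is true but irrelevant here, and $W^1_{\beta,\Sigma}\subset W^1_{\gamma,\Sigma}$ is false since $\beta<\gamma$). The identity ``$Q(\Phi(u),\Phi^*(v))=b_{\Omega_+}(u,v)-\overline{b_{\Omega_+}(v,u)}=0$'' therefore cannot be asserted as written: one must decompose $u=u_W+u_\gamma$, $v=v^*_W+v_{-\beta}$, check that the three cross terms are honest convergent integrals, and invoke the $(\beta,\gamma)$-analogues of Lemma \ref{noyau} on both sides (extension of $\tilde{b}_{\Omega_+}(u,\cdot)$ by density to $W^1_{-\gamma,\Sigma}(\Omega_+,\CC^3)$ and its vanishing on the kernel, and symmetrically for $v$), together with a Lemma \ref{lemext}-type bound for ${\cal D}_W^*$ and the ``finite limit kills the non-constant exponentials'' argument in \eqref{eq15f3}--\eqref{eq15f6} for genuinely complex $\nu_k-\nu_{k'}$. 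You flag some of this at the end, but the proof as written asserts the outcome of that bookkeeping rather than performing it, and the same unproved identity $\langle\tilde{B}_{\Omega_+,\gamma}(w),v\rangle=Q(w,\Phi^*(v))$ carries the weight of your surjectivity step as well.
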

%
\begin{proof}
The inclusions in the proposition are trivial. Set $d$ = $dim (KerB_{\Omega_+,\beta}/KerB_{\Omega_+,\gamma})$ and let $Z_1,\ldots, Z_d$ be a basis of a complement of $KerB_{\Omega_+,\gamma}$ in $KerB_{\Omega_+,\beta}$. Proposition \ref{asymt} applied with $f=0$ implies that there exists a matrix $C = (C_{ij})_{i=1,\ldots,d; j=1,\ldots, \kappa}$ of complex numbers satisfying the condition $Z_i - C_{ij} W_j \in W^1_{\gamma,\Sigma}(\Omega_+,\CC^3)$, $i=1,\ldots, d$. 
Assume that $X \in \CC^d$ satisfies $C^t X=0$ (where $C^t$ is the transpose matrix of $C$). 
Consequently $X_i Z_i$ = $X_i Z_i - X_i C_{ij}W_j$ $\in W^1_{\gamma,\Sigma}(\Omega_+,\CC^3)$.
But Lemma \ref{noyau1}, (b) shows that if $u \in KerB_{\Omega_+,\beta}$ and $u \in W^1_{\gamma,\Sigma}(\Omega_+,\CC^3)$ then $u \in KerB_{\Omega_+,\gamma}$, thus $X_i Z_i$ $\in KerB_{\Omega_+,\gamma}$ and $X=0$. This shows that the rank of the matrix $C$ is $d$. It follows that $\kappa \geq d$ and by changing the basis $Z_1,\ldots, Z_d$ and the order of $W_1,\ldots,W_{\kappa}$ if necessary, we may assume 
\begin{equation}
\label{eq43a2}
Z_i - W_i -\sum_{j=d+1}^{\kappa} C_{ij} W_j \in W^1_{\gamma,\Sigma}(\Omega_+,\CC^3), \, i=1,\ldots, d.
\end{equation}
Consider the space $E$ = $B_{\Omega_+,\beta}(W^1_{\beta,\Sigma}(\Omega_+,\CC^3))$ $\cap$ $i_{\beta,\gamma}(W^1_{-\gamma,\Sigma}(\Omega_+,\CC^3))^*$ (where $i_{\beta,\gamma}$ is defined in \eqref{eq300}). If $f \in E$, there exists $u_{\beta} \in W^1_{\beta,\Sigma}(\Omega_+,\CC^3)$ with the property that $f = B_{\Omega_+,\beta} u_{\beta} \in (W^1_{-\beta,\Sigma}(\Omega_+,\CC^3))^*$ and $f \in i_{\beta,\gamma}(W^1_{-\gamma,\Sigma}(\Omega_+,\CC^3))^*$. Proposition \ref{asymt} can be applied to $f$ and $u_{\beta}$. Applying  the operator $B_{\Omega_+,\beta}$ to Eq. \eqref{eq15l2} of this proposition and using \eqref{eq43a2} we obtain
\begin{equation}
\label{eq15l4}
f =  B_{\Omega_+,\beta} u'_{\gamma} + \sum_{i=d+1}^{\kappa} e_i (B_{\Omega_+,\beta} W_i)
\end{equation}
where $u'_{\gamma} \in W^1_{\gamma,\Sigma}(\Omega_+,\CC^3)$ and $e_{d+1},\ldots, e_{\kappa} \in \CC$.
Let $F$ be the vector space $F = B_{\Omega_+,\beta}(W^1_{\gamma,\Sigma}(\Omega_+,\CC^3))$ and $G$ be the vector space spanned by the functions $B_{\Omega_+,\beta} W_{i}$, $i=d+1,\ldots, \kappa$. We have shown $E \subset F+G$. 
Assume that $f \in E \cap G$. Consequently $f$ is written under the form $f$ = $B_{\Omega_+,\beta} v_{\gamma}$ = $\sum_{i=d+1}^{\kappa} f_i (B_{\Omega_+,\beta} W_i)$ where $v_{\gamma}$ $\in$ $W^1_{\gamma,\Sigma}(\Omega_+,\CC^3)$ and $f_{d+1},\ldots, f_{\kappa} \in \CC$. Thus $\sum_{i=d+1}^{\kappa} f_i W_i - v_{\gamma} \in$ $KerB_{\Omega_+,\beta}$, therefore we obtain a relation $\sum_{i=d+1}^{\kappa} f_i W_i - v'_{\gamma}$ = $\sum_{i=1}^d f_iZ_i$ where $f_{1},\ldots, f_{d} \in \CC$ and $v'_{\gamma}$ $\in$ $W^1_{\gamma,\Sigma}(\Omega_+,\CC^3)$. From \eqref{eq43a2} it follows that $\sum_{i=1}^d f_iW_i$ + $\sum_{i=d+1}^{\kappa} (\sum_{j=1}^d C_{ji} f_j - f_i) W_i$ $\in$ $W^1_{\gamma,\Sigma}(\Omega_+,\CC^3)$. 
Lemma \ref{indep} implies that $f_1 = \ldots f_{\kappa} =0$ thus $f=0$. 
We have shown that the sum $F+G$ is direct and also that the vectors $B_{\Omega_+,\beta} W_i$, $i=d+1,\ldots, \kappa$ are linearly independent so that $G$ is $\kappa - d$ dimensional.
On the other hand thanks to Lemma \ref{noyau1}, (a), $F$ $\subset$ $E$ and from Lemma \ref{lemext} and \eqref{eq15f1a}, $G$ $\subset$ $E$. Accordingly $E$ = $F$ $\oplus$ $G$.

We shall need an intermediate result. Assume more generally that ${\cal H}$ is a Hilbert space, that ${\cal E}$, ${\cal F}$, ${\cal G}$ are subspaces of ${\cal H}^*$ (the topological dual of ${\cal H}$) such that ${\cal E}$ = ${\cal F}$ $\oplus$ ${\cal G}$, that ${\cal G}$ is finite-dimensional and ${\cal F}$ is closed in ${\cal H}^*$ (so that ${\cal E}$  is closed in ${\cal H}^*$). Let $\{\varphi_i\}_{i=1,\ldots,q}$ ($q \in \NN^*$) be a basis of ${\cal G}$ and $\{\psi_i\}_{i=1,\ldots,q}$ be a family of vectors of ${\cal H}$ with the properties that $\langle\varphi_i, \psi_j\rangle = \delta_{i,j}$, $i,j =1,\ldots, q$ and $\psi_i \in {\cal F}^{\perp}$, $i =1,\ldots, q$ 
(here $\langle.,. \rangle$ is the duality bracket between ${\cal H}^*$ and ${\cal H}$ and if $A$ is a subset of ${\cal H}$ (resp. ${\cal H}^*$), $A^{\perp}$ is the orthogonal of $A$  in ${\cal H}^*$ (resp. ${\cal H}$) with respect to this duality bracket).
Let ${\cal G}_1 \subset {\cal H}$ be the space spanned by the family of vectors $\{\psi_i\}_{i=1,\ldots,q}$ (of dimension $q$). Let us show that  ${\cal F}^{\perp} = {\cal E}^{\perp} \oplus {\cal G}_1$. First ${\cal E}^{\perp}$ $\subset$ ${\cal F}^{\perp}$ and ${\cal G}_1$ $\subset$ ${\cal F}^{\perp}$. On the other hand ${\cal E}^{\perp} \cap {\cal G}_1$ $\subset$ ${\cal G}^{\perp} \cap {\cal G}_1 = \{0\}$. Finally if $x \in {\cal F}^{\perp}$, write $x =x_1 + x_2$ where $x_1 = x - \sum_{i=1}^q \langle \varphi_i,x\rangle \psi_i$. Therefore $x_1 \in {\cal F}^{\perp} \cap {\cal G}^{\perp} = {\cal E}^{\perp}$ and $x_2 = x-x_1 = \sum_{i=1}^q \langle \varphi_i,x\rangle \psi_i$ $\in$ ${\cal G}_1$. The result is proved. 

Apply this result to the Hilbert space ${\cal H}=W^1_{-\gamma,\Sigma}(\Omega_+,\CC^3)$ and to the spaces ${\cal E}=i_{\beta,\gamma}^{-1}E$, ${\cal F}=i_{\beta,\gamma}^{-1}F$ = $i_{\beta,\gamma}^{-1}B_{\Omega_+,\beta}(W^1_{\gamma,\Sigma}(\Omega_+,\CC^3))$ and ${\cal G}=i_{\beta,\gamma}^{-1}G$. Since $i_{\beta,\gamma}$ is injective and $E \subset i_{\beta,\gamma}(W_{-\gamma,\Sigma}^1(\Omega_+,\CC^3))^*$, it follows that ${\cal E} = {\cal F} \oplus {\cal G}$. Let ${\cal G}_1 \subset {\cal H}$ be the corresponding space of the previous construction of the intermediate result (of dimension $\kappa-d$). According to Lemma \ref{noyau1}, (b), ${\cal F}$ is exactly $B_{\Omega_+,\gamma}(W^1_{\gamma,\Sigma}(\Omega_+,\CC^3))$ hence closed in ${\cal H}^*$ (because  $B_{\Omega_+,\gamma}$ is Fredholm). Moreover ${\cal G}$ is finite dimensional of dimension $\kappa - d$, thereby closed in ${\cal H}^*$.
Remark that $B_{\Omega_+,\beta}(W^1_{\beta,\Sigma}(\Omega_+,\CC^3))$ is the orthogonal of $KerB_{\Omega_+,\beta}^*$ in $(W^1_{-\beta,\Sigma}(\Omega_+,\CC^3))^*$ so that $E$ =  $B_{\Omega_+,\beta}(W^1_{\beta,\Sigma}(\Omega_+,\CC^3))$ $\cap$ $i_{\beta,\gamma}(W^1_{-\gamma,\Sigma}(\Omega_+,\CC^3))^*$ is the orthogonal of $KerB_{\Omega_+,\beta}^*$ in $i_{\beta,\gamma}(W^1_{-\gamma,\Sigma}(\Omega_+,\CC^3))^*$ consequently ${\cal E}=i_{\beta,\gamma}^{-1}E$  is the orthogonal of $KerB_{\Omega_+,\beta}^*$ ($\subset W^1_{-\gamma,\Sigma}(\Omega_+,\CC^3)$) in $(W^1_{-\gamma,\Sigma}(\Omega_+,\CC^3))^*$ and ${\cal E}^{\perp}$ (the orthogonal of ${\cal E}$ in $W^1_{-\gamma,\Sigma}(\Omega_+,\CC^3)$) is exactly $KerB_{\Omega_+,\beta}^*$ (because $KerB_{\Omega_+,\beta}^*$ is finite-dimensional thus closed in $W^1_{-\gamma,\Sigma}(\Omega_+,\CC^3)$).
Finally ${\cal F}^{\perp}$(the orthogonal of ${\cal F}$ in $W^1_{-\gamma,\Sigma}(\Omega_+,\CC^3)$) is the orthogonal of $B_{\Omega_+,\gamma}(W^1_{\gamma,\Sigma}(\Omega_+,\CC^3))$ in $W^1_{-\gamma,\Sigma}(\Omega_+,\CC^3)$, that is $KerB_{\Omega_+,\gamma}^*$. We have shown
\begin{equation}
\label{eq1020}
KerB_{\Omega_+,\gamma}^* = KerB_{\Omega_+,\beta}^* \oplus {\cal G}_1.
\end{equation}
The proposition follows.
\end{proof}
%
%

%
\end{document}